\documentclass[final, 3p, times]{elsarticle}
\usepackage{amssymb,amsfonts,amsthm, amsmath}
\usepackage[hidelinks]{hyperref}
\usepackage{cleveref}
\usepackage{mathrsfs}
\usepackage{bm}
\usepackage{mathtools}
\usepackage{tikz-cd}
\usepackage{svg}
\usepackage{multirow}

\DeclareMathOperator{\sym}{sym}
\DeclareMathOperator{\asym}{asym}
\DeclareMathOperator{\skw}{skw}

\DeclareMathOperator{\Tr}{Tr}
\let\div\relax
\newcommand{\div}{\nabla \cdot}
\DeclareMathOperator{\divo}{div}

\newcommand{\BDM}[1]{\mathbb{BDM}_{#1}}
\newcommand{\RT}[1]{\mathbb{RT}_{#1}}
\renewcommand{\P}[1]{\mathbb{P}_{#1}}
\renewcommand{\L}[1]{\mathbb{L}_{#1}}

\journal{ }

\begin{document}

\newtheorem{theorem}{Theorem}
\newtheorem{lemma}[theorem]{Lemma}
\newtheorem{remark}[theorem]{Remark}
\newtheorem{definition}[theorem]{Definition}

\numberwithin{equation}{section}
\numberwithin{theorem}{section}
\numberwithin{table}{section}

% \AddToHook{cmd/appendix/before}{%
%   \setcounter{equation}{0}%
%   \renewcommand{\theequation}{\thesection.\arabic{equation}}%
%   \setcounter{theorem}{0}%
%   \renewcommand{\thetheorem}{\thesection.\arabic{theorem}}%
% }

\begin{frontmatter}

\title{Multipoint stress mixed finite element methods for the linear Cosserat equations}

\author[label1]{Wietse M. Boon}
\author[label2]{Alessio Fumagalli}
\author[label3,label1]{Jan M. Nordbotten}
\author[label4]{Ivan Yotov}

\address[label1]{
  Division of Energy and Technology, NORCE Norwegian Research Centre, Bergen, Norway
            }
\address[label2]{MOX Laboratory, Department of Mathematics, Politecnico di Milano, Milan, Italy}

\address[label3]{Department of Mathematics, University of Bergen, Bergen, Norway} 

\address[label4]{Department of Mathematics, University of Pittsburgh, Pittsburgh, USA}

\begin{abstract}
    We propose mixed finite element methods for Cosserat materials that use suitable quadrature rules to eliminate the Cauchy and coupled stress variables locally. The reduced system consists of only the displacement and rotation variables. Four variants are proposed for which we show stability and convergence using a priori estimates. Numerical experiments verify the theoretical findings and higher order convergence is observed in some variables.
\end{abstract}

%% Keywords
\begin{keyword}
    multipoint stress \sep Cosserat \sep hybridization \sep mixed-finite elements

%% MSC codes here, in the form: \MSC code \sep code
%% or \MSC[2008] code \sep code (2000 is the default)
\end{keyword}

\end{frontmatter}

\section{Introduction}

The equations governing Cosserat materials \cite{cosserat1909theorie} form an extension of linearized elasticity that incorporates local rotations in the medium as an independent variable and a non-negative parameter, which we denote by $\ell$, that represents the scale separation. The equations are suitable for modeling micropolar media, such as granular or composite porous media \cite{ehlers2020cosserat}.
These systems are typically solved by using the displacements and rotation as primary variables \cite{RIAHI20093450}. A mixed formulation that includes mechanical and couple stresses was recently analyzed in \cite{boon2025mixed} and investigated numerically in \cite{nordbotten2024mixed}. 
This formulation presents advantages such as avoiding locking phenomena and maintaining robustness in case the system degenerates to the equations for linearized elasticity. However, the inclusion of the two additional stress fields significantly increases the computational cost, particularly for three-dimensional problems. 

To mitigate this additional cost, this work proposes discretization methods inspired by the multipoint flux mixed finite element method \cite{wheeler2006}. This methodology has previously been extended from Darcy flow to elasticity, Stokes flow, and Biot poroelasticity models in \cite{ambartsumyan2020multipoint,msfmfe-Biot2020,Caucao2022,Boon2022b,Boon2022c,egger2020second}.
The key idea is to introduce a low-order quadrature rule which makes the mass matrices associated with the stress variables block diagonal. In turn, these can easily be inverted,  resulting in a Schur complement system that depends only on the displacement and rotation variables. The Cauchy and couple stresses can be post-processed.

We propose four methods based on different choices of mixed finite element spaces and demonstrate both theoretically and numerically that the stability and linear convergence of the methods remain intact after the application of the localized quadrature rule. Moreover, we emphasize that the reduced methods are stable and convergent in the degenerate limit of linearized elasticity $\ell = 0$. A key component in the analysis is the use of a discrete norm for the divergence of the couple stress, which is based on the projection to the rotation finite element space. This allows us to both avoid the assumption in \cite{boon2025mixed} that $\ell$ is a piecewise linear function, and to analyze methods with finite element spaces that were not considered in \cite{boon2025mixed}. In particular, three of the methods are new mixed finite element methods for the Cosserat problem and in some of these cases they result in new (non-reduced or reduced) mixed finite element methods for linear elasticity with weak stress symmetry when $\ell = 0$.

The first method we study is a first-order based on mixed finite element spaces considered in \cite{boon2025mixed}. The second method is a new first-order method for the linear Cosserat system in which the rotation variable can additionally be eliminated when $\ell = 0$, recovering the multipoint stress mixed finite method for linear elasticity developed in \cite{ambartsumyan2020multipoint}. The final two methods are new methods for the linear Cosserat equations based on higher-order finite elements. In the third method, when $\ell = 0$, both the mixed finite element method and its multipoint stress version are new second-order mixed methods for linear elasticity with weak stress symmetry. In the fourth method, when $\ell = 0$, we recover the method proposed in \cite{lee2016towards} in the non-reduced case, whereas the reduced method is a new second-order multipoint stress mixed finite element method for linear elasticity with weak stress symmetry.

The article is organized as follows. In \Cref{sec:model} we present the equations governing linear Cosserat materials. \Cref{sec: MS-MFE} presents the general analysis strategy that we apply to the multipoint stress mixed finite element methods proposed in the four subsequent \Cref{sec:simple_scheme,sec:reducible_scheme,sec:RT1-L1,sec:RT1-P1}.
The performance of these methods is investigated numerically in \Cref{sec:num}. \Cref{sec:conclusion} contains the conclusions.

\subsection{Preliminary definitions and notation}
\label{sub: preliminaries}

Let $d=2$ or 3 be the spatial dimension of our problem and let $\Omega \in \mathbb{R}^d$ be a domain with Lipschitz boundary $\partial \Omega$, having outward unit normal $n$.
We assume that the boundary $\partial \Omega$ is divided into two disjoint parts $\partial_e \Omega$ and $\partial_n \Omega$ on which essential and natural boundary conditions are imposed, respectively.

Let $\mathbb{M} \coloneqq \{ \tau: \Omega \to \mathbb{R}^{d \times d} \}$ be the space of matrix-valued functions,
let $\mathbb{V} \coloneqq \{u: \Omega \to \mathbb{R}^d\}$ be the space of vector-valued functions, and
let $\mathbb{K} \coloneqq \{r: \Omega \to
\mathbb{R}^{k_d}\}$, with $k_d = \binom{d}{2}$, be the space of functions that are 
vector-valued in 3D and scalar-valued in 2D. Finally, we set $\mathbb{W} \coloneqq \{\omega: \Omega \to \mathbb{R}^{d \times k_d}\}$, which coincides with $\mathbb{M} $ if $d=3$ and with $\mathbb{V}$ if $d=2$.
% \marginpar{we have a problem in notation with R that conflicts with the real numbers. We can avoid using the mathbb but then we need something else. Since it is also used
% for fourth-order tensors.}
%
For a given function space $\mathbb{X}$, let $L^2 \mathbb{X}$ be the space
of square-integrable functions in $\mathbb{X}$, endowed with a scalar product $(\phi, \psi)_\Omega =\int_\Omega \phi \psi$ and induced
norm $\| \phi \|_\Omega\eqqcolon \sqrt{(\phi, \phi)_\Omega}$. Let $H_{\divo} \mathbb{X} \subset L^2 \mathbb{X}$ be the subspace that contains functions with square-integrable divergence.
% To accommodate boundary conditions, we define $\Sigma_0$ the subspace of $\Sigma$ with
% null traces on $\partial_e \Omega$, similarly for $W_0$.
An apostrophe on a function space indicates its dual space whereas apostrophes on functions denote test functions. Angled brackets denote duality pairings.

For $\tau \in \mathbb{M}$, we define the following algebraic operators:
the trace operator $\Tr: \mathbb{M} \to \mathbb{R}$ as $\Tr \tau = \sum_i \tau_{ii}$,
the symmetry operator $\sym: \mathbb{M} \to \mathbb{M}$
as $\sym \tau = (\tau + \tau^\top)/2$, the skew operator $\skw: \mathbb{M} \to \mathbb{M}$ as $\skw \tau = (\tau - \tau^\top)/2$,
and the asymmetry operator $\asym: \mathbb{M} \to \mathbb{K}$ along with its adjoint $\asym^*:  \mathbb{K} \to \mathbb{M}$, as
\begin{align*}
    \asym \tau &= \begin{bmatrix}
        \tau_{32} - \tau_{23} \\
        \tau_{13} - \tau_{31} \\
        \tau_{21} - \tau_{12}
    \end{bmatrix} , &
    \asym^* r &=
    \begin{bmatrix}
        0    & -r_3 & r_2  \\
        r_3  & 0    & -r_1 \\
        -r_2 & r_1  & 0
    \end{bmatrix}  & \text{for } 
    d &= 3,
    \\
    \asym \tau                &= \tau_{21} - \tau_{12}, &
    \asym^* r                   &=
    \begin{bmatrix}
        0 & -r \\
        r & 0
    \end{bmatrix} &
    \text{for }
    d &= 2.
\end{align*}
We note the identities $\sym \tau + \skw \tau = \tau$, $\asym \asym^* r = 2r$ and $\asym^* \asym \tau = 2 \skw \tau$.
To finish this section, we introduce the invertible operator $S: \mathbb{W} \to \mathbb{W}$, given by $S \theta = \theta$ in 2D and $S \theta = \theta^T - (\theta : I) I$ in 3D. The following identity then holds for sufficiently regular $\theta \in \mathbb{W}$:
\begin{align} \label{eq: asym identity}
    \asym (\nabla \times \theta) = \div (S \theta),
\end{align}
in which $\nabla \times \theta = [\partial_2 \theta, -\partial_1 \theta]$ in 2D.

The notation $\alpha \lesssim \beta$ implies that a $c > 0$ exists, independent of the mesh size or the length scale $\ell$, such that $c \alpha \le \beta$. The relation ``$\gtrsim$'' has analogous meaning and $\alpha \eqsim \beta$ means that $\alpha \lesssim \beta \lesssim \alpha$.

\section{The Cosserat equations} \label{sec:model}

The primary variables are the Cauchy stress $\sigma \in \mathbb{M}$, the couple stress $\omega \in \mathbb{W}$, 
the displacement $u \in \mathbb{V}$, and the rotation $r \in \mathbb{K}$. As data for the problem, we introduce
the fourth-order material tensor $\mathcal{C}_\sigma: \mathbb{M} \to \mathbb{M}$ in a Cosserat material
\begin{align*}
    \mathcal{C}_\sigma \tau
    &\coloneqq 2 \mu_\sigma \sym \tau + 2\mu_\sigma^c \skw \tau + \lambda_\sigma (\Tr \tau) I,
    & \tau &\in \mathbb{M},
\end{align*}
in which $\mu_\sigma > 0$ and $\lambda_\sigma \ge 0$ are the Lam\'e parameters and $\mu_\sigma^c > 0$ is the Cosserat couple modulus. Moreover, for the couple stress $\omega$
we have the following material tensor $\mathcal{C}_\omega : \mathbb{W}\to \mathbb{W}$ as
\begin{align*}
    \mathcal{C}_\omega \tau &\coloneqq
    \begin{cases*}
        2 \mu_\omega \sym \tau + 2\mu_\omega^c \skw \tau + \lambda_\omega (\Tr \tau) I & $d = 3$,\\
        2 \mu_\omega \tau & $d=2$,
    \end{cases*}
    & \tau &\in \mathbb{W},
\end{align*}
with dedicated material parameters $\mu_\omega>0$, $\mu_\omega^c>0$,  and $\lambda_\omega \ge 0$ for $\omega$.
Let $\mathcal{A}_\sigma$ denote the inverse of $\mathcal{C}_\sigma$, given by
\begin{align} \label{eq: def A}
    \mathcal{A}_\sigma \tau 
    &\coloneqq \frac1{2\mu_\sigma} \left( \sym \tau - \frac{\lambda_\sigma}{2\mu_\sigma + d \lambda_\sigma} (\Tr \tau) I\right) + \frac1{2\mu_\sigma^c} \skw \tau.
    % = \frac1{2\mu_\sigma} \left( \tau - \alpha_\sigma (\Tr \tau) I\right) + \beta_\sigma \left(\frac12 \asym^* \asym \right) \tau
\end{align}
Similarly, let $\mathcal{A}_\omega$ denote the inverse of $\mathcal{C}_\omega$. The tensors $\mathcal{A}_\sigma$ and $\mathcal{A}_\omega$ are positive definite and bounded, satisfying for all $\tau \in \mathbb{M}$, a.e. in $\Omega$,
\begin{equation}\label{A-pd}
  \mathcal{A}_\sigma\tau:\tau \eqsim \tau:\tau, \quad
  \mathcal{A}_\omega\tau:\tau \eqsim \tau:\tau.
  \end{equation}

Let $\ell \ge 0$ be a continuous parameter that represents the scale separation in the micropolar medium. We assume that
\begin{align} \label{eq: bound ell}
    \| \ell \|_{L^\infty(\Omega)} + \| \nabla \ell \|_{L^\infty(\Omega)} \lesssim 1.
\end{align}

\begin{remark}
It is assumed in \cite{boon2025mixed} that $\ell$ is piecewise linear. Here we avoid this assumption by taking a slightly modified approach in the analysis, see Remark~\ref{ell-remark} below.
\end{remark}

The strong formulation of the Cosserat problem is: find $(\sigma, \omega, u, r) \in \mathbb{M} \times \mathbb{W} \times \mathbb{V} \times \mathbb{K}$ such that
\begin{subequations} \label{eq:cosserat}
\begin{align}
    \mathcal{A}_\sigma \sigma - \nabla u - \asym^* r &= g_\sigma, &
    \mathcal{A}_\omega \omega - \ell \nabla r &= \ell g_\omega,
    & \text{in } &\Omega, \label{eq: constit laws} \\
    - \div \sigma &= f_\sigma, &
    \asym \sigma - \div \ell \omega &= f_\omega,
    & \text{in } &\Omega. \label{eq: momentum eqs}
\end{align}
with $g_\sigma$, $g_\omega$, $f_\sigma$, and $f_\omega$ given. 
We impose essential and natural boundary conditions as
\begin{align}
    \sigma n = 0
    \quad \text{and} \quad
    \ell \omega \cdot n =  0
    && \text{on } \partial_e \Omega,
    &&\qquad &&
    u = 0
    \quad \text{and} \quad
    r = 0
    && \text{on } \partial_n \Omega.
\end{align}
\end{subequations}
The boundary conditions are assumed to be zero for simplicity. Non-homogeneous essential boundary conditions can be handled by a lifting technique, while non-homogeneous natural boundary conditions result in additional boundary terms, which can be controlled for sufficiently smooth data. 
In the variational formulation of \eqref{eq:cosserat}, we seek the solution quadruplet $(\sigma, \omega, u, r) \in \Sigma \times W \times U \times R$ in the following Sobolev spaces
\begin{subequations} \label{eq: Sobolev spaces}
\begin{align} 
    \Sigma &\coloneqq \left\{\sigma \in H_{\divo} \mathbb{M}: n \cdot \sigma|_{\partial_e \Omega} = 0 \right\}, &
    W &\coloneqq \left\{\omega \in L^2 \mathbb{W}: \div \ell \omega \in L^2 \mathbb{K}, n \cdot \ell \omega|_{\partial_e \Omega} = 0 \right\}, \\
    U &\coloneqq L^2 \mathbb{V}, &
    R &\coloneqq L^2 \mathbb{K}.
\end{align}
\end{subequations}

The variational formulation of problem \eqref{eq:cosserat} is: find $(\sigma,\omega,u,r) \in \Sigma \times W \times U \times R$ such that
\begin{subequations}\label{eq:weak}
  \begin{align}
    (\mathcal{A}_\sigma \sigma, \sigma')_\Omega + (\div \sigma', u)_\Omega
    - (\asym \sigma', r)_\Omega & = (g_\sigma, \sigma')_\Omega & 
    \forall \sigma' &\in \Sigma, \label{eq:weak-1}\\
    (\mathcal{A}_\omega \omega, \omega')_\Omega + (\div \ell \omega', r)_\Omega & = (\ell g_\omega, \omega')_\Omega & 
    \forall \omega' &\in W,\label{eq:weak-2}\\
    -(\div \sigma, u')_\Omega & = (f_\sigma, u')_\Omega & 
    \forall u' &\in U,\label{eq:weak-3}\\
    (\asym \sigma, r')_\Omega - (\div \ell \omega, r')_\Omega & = (f_\omega, r')_\Omega & 
    \forall r' &\in R.\label{eq:weak-4}
  \end{align} 
\end{subequations}

For brevity, we collect the variables into two pairs and introduce the following notation for the product spaces
\begin{align}
	\eta &\coloneqq (\sigma, \omega) \in \Sigma \times W \eqqcolon X, &
	v &\coloneqq (u, r) \in U \times R \eqqcolon Y.
\end{align}
The product spaces $X$ and $Y$ are then endowed with the following $H_{\divo}$ and $L^2$-type norms:
\begin{align}\label{norms}
	\| \eta \|_X^2 &\coloneqq 
	\| \sigma \|_\Omega^2 + \| \div \sigma \|_\Omega^2
	+ \| \omega \|_\Omega^2 + \| \div \ell \omega \|_\Omega^2, &
	\| v \|_Y^2 &\coloneqq 
	\| u \|_\Omega^2 + \| r \|_\Omega^2.
\end{align}
The variational formulation \eqref{eq:weak} can now be concisely written as: find $(\eta, v) \in X \times Y$ such that
\begin{subequations}\label{eq:cosserat_weak}
\begin{align}
    \langle A\eta, \eta' \rangle
    - \langle B \eta', v \rangle
    & = \langle g, \eta' \rangle \quad \forall \eta' \in X,\\
    \langle B \eta, v' \rangle &= \langle f, v' \rangle \quad
    \forall  v' \in Y,
\end{align}
\end{subequations}
in which the operators $A: X \to X^\prime$ and $B: X \to Y^\prime$, and the functionals $g \in X^\prime$ and $f \in Y^\prime$ are given by
\begin{subequations}
\begin{align}
    \langle A \eta, \eta' \rangle
    &\coloneqq (\mathcal{A}_\sigma \sigma, \sigma')_\Omega + (\mathcal{A}_\omega \omega, \omega')_\Omega, \label{eq: operator A}\\
    \langle B \eta, v' \rangle
    &\coloneqq -(\div \sigma, u')_\Omega
    + (\asym \sigma, r')_\Omega
    - (\div \ell \omega, r')_\Omega, \\
    \langle g, \eta' \rangle &\coloneqq (g_\sigma, \sigma')_\Omega
    + (\bar u, \sigma')_{\partial_n \Omega}
    + (\ell g_\omega, \omega')_\Omega 
    + (\ell \bar r, \omega')_{\partial_n \Omega}
    , \\
    \langle f, v'\rangle  &\coloneqq (f_\sigma, u')_\Omega + (f_\omega, r')_\Omega, 
\end{align}
\end{subequations}
for all $\eta, \eta' \in X$ and $v, v' \in Y$. The well posedness of \eqref{eq:cosserat_weak} is established in \cite[Cor.~4.4]{boon2025mixed}. 

\begin{remark}
    The symmetries of $\mathcal{A}_\sigma$ and $\mathcal{A}_\omega$ are apparent when considered in a variational form. In particular, the identities at the end of \Cref{sub: preliminaries} allow us to write
    \begin{align*}
        (\mathcal{A}_\sigma \sigma, \sigma')_\Omega
        = \frac1{2\mu_\sigma} \left( ( \sigma, \sigma')_\Omega
        - \alpha_\sigma (\Tr \sigma, \Tr \sigma')_\Omega \right)
        + \beta_\sigma \frac12 (\asym \sigma, \asym \sigma')_\Omega
    \end{align*}
    with $\alpha_\sigma = {\lambda_\sigma}/ ({2\mu_\sigma + d \lambda_\sigma}) $ and $\beta_\sigma =  (\mu_\sigma - \mu_\sigma^c) / (2 \mu_\sigma \mu_\sigma^c)$. 
\end{remark}

In the limit case of $\ell = 0$, with zero $f_\omega$, the second equation of \eqref{eq: constit laws} becomes $\mathcal{A}_\omega \omega = 0$, which implies that $\omega = 0$. Moreover, the third equation implies $\asym \sigma = 0$ which relates to the conservation of angular momentum. Thus, \eqref{eq:cosserat} effectively degenerates to the linearized elasticity problem: find $(\sigma, u, r) \in \mathbb{M} \times \mathbb{V} \times \mathbb{K}$ such that
\begin{align}\label{eq:elasticity}
    \mathcal{A}_\sigma \sigma - \nabla u - \asym^* r &= g_\sigma, &
    - \div \sigma &= f_\sigma, &
    \asym \sigma &= 0,
    & \text{in } &\Omega.
\end{align}

\section{General analysis of multipoint stress mixed finite element methods}
\label{sec: MS-MFE}

In this section, we introduce the general strategy to construct stable and convergent multipoint stress mixed finite element methods for the Cosserat problem. These general results will be used to prove stability and convergence of the methods proposed in the subsequent four sections.

\subsection{Robust mixed finite element methods based on stable elasticity triplets}

Let $\Omega_h$ be a shape-regular, simplicial tessellation of $\Omega$, on which we consider the following finite element spaces. Let $\P{k}$ denote the element-wise, discontinuous polynomial finite elements on $\Omega_h$ of order $k$. Let $\L1 \subset \P1$ denote the lowest order Lagrange elements, containing continuous, piecewise linear functions.
Let $\RT{k}$ denote the Raviart-Thomas space of order $k$, for which $k$ denotes the polynomial order of the normal traces on the mesh facets. Similarly, let $\BDM{k}$ denote the Brezzi-Douglas-Marini space of order $k$.
For ease of reference, we recall the inclusions
\begin{align}
    % \RT0 &\subseteq \BDM1 \subseteq \RT1, &
    \RT{k} &\subseteq \P{k + 1}^d, &
    \BDM{k} &\subseteq \P{k}^d, &
    \div \RT{k} = \div \BDM{k + 1} &\subseteq \P{k}.
\end{align}

Using these definitions, we will propose and analyze finite element spaces that are conforming in the sense that
$\Sigma_h \subseteq \Sigma$, 
$W_h \subseteq W$, 
$U_h \subseteq U$, and
$R_h \subseteq R$. 
Details further specifying each variant will be presented in \Cref{sec:simple_scheme,sec:reducible_scheme,sec:RT1-L1}.

For each choice of finite element spaces, we define $X_h \coloneqq \Sigma_h \times W_h$ and $Y_h \coloneqq U_h \times R_h$, in analogy with the continuous case. The mixed finite element problem is then posed as: find $(\eta_h, v_h) \in X_h \times Y_h$ such that
\begin{align}\label{eq: system full-MFE}
    \langle A\eta_h, \eta_h' \rangle
    - \langle B \eta_h', v_h \rangle
    + \langle B \eta_h, v_h' \rangle
    &= \langle g, \eta_h' \rangle + \langle f, v_h' \rangle, &
    \forall (\eta_h', v_h') &\in X_h \times Y_h.
    \end{align}

We will base our choice of discrete spaces on finite elements that form a stable discretization for elasticity with weakly imposed symmetry \cite{arnold2007mixed}. In particular, we choose spaces that satisfy the following property.

\begin{definition} \label{def:elasticity triplet}
    A triplet $\Sigma_h \times U_h \times R_h$ is \emph{elasticity-stable} if the following condition holds
    \begin{align}\label{eq:inf-sup}
        \inf_{(u_h, r_h) \in U_h \times R_h} \sup_{\sigma_h \in \Sigma_h}
        \frac{
            (\div \sigma_h, u_h)_\Omega + (\asym \sigma_h, r_h)_\Omega
        }{\| (\sigma_h, 0) \|_X \| (u_h, r_h) \|_Y}
        \gtrsim 1.
    \end{align}
\end{definition}

As noted in \cite[Thm.~4.6]{boon2025mixed}, elasticity-stable finite element spaces can directly be used to form a stable discretization for the Cosserat equations, regardless of the choice of $W_h$. Using this observation, we can formulate sufficient conditions to guarantee stability of \eqref{eq: system full-MFE}. For the stability analysis, we introduce the following discrete norm
\begin{align} \label{eq: discrete norm}
    \| \eta \|_{X_h}^2 \coloneqq 
    \| \sigma \|_\Omega^2
    + \| \div \sigma \|_\Omega^2
    + \| \omega \|_\Omega^2
    + \| \Pi_R \div \ell \omega \|_\Omega^2,
\end{align}
where $\Pi_R$ denotes the $L^2$ projection onto $R_h$. The associated dual norm is defined as $\| f \|_{X_h'} \coloneqq \sup_{\eta \in X_h} \frac{\langle f, \eta \rangle}{\| \eta \|_{X_h}}$ for $f \in X_h'$.

\begin{remark}
  In general, weakening of the norm of the divergence of the couple stress may result in violating the continuity of the operator $B$. In our case, as shown in \Cref{thm: stability full MFE} below, the use of $\| \Pi_R \div \ell \omega \|_\Omega$ in the norm $\| \eta \|_{X_h}$ still results in continuous operator $B$. Furthermore, due to the weakened norm, the stability result in \Cref{thm: stability full MFE} does not require that $\div W_h \subseteq R_h$. However, the accuracy of the method may be affected if this property does not hold, as we will see in the methods presented in \Cref{sec:reducible_scheme,sec:RT1-L1}.
\end{remark}

\begin{theorem}[Stability] \label{thm: stability full MFE}
    Let the pair $\Sigma_h \times U_h$ satisfy $\div \Sigma_h \subseteq U_h$ and let the triplet $\Sigma_h \times U_h \times R_h$ be elasticity-stable, cf.~\Cref{def:elasticity triplet}. Then Problem \eqref{eq: system full-MFE} admits a unique solution that satisfies
    \begin{align}
        \| \eta_h \|_{X_h} + \| v_h \|_Y \lesssim \| g \|_{X_h'} + \| f \|_{Y'}.
    \end{align}
\end{theorem}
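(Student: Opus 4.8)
The plan is to verify the four hypotheses of the Brezzi saddle-point theory for the forms $\langle A\cdot,\cdot\rangle$ and $\langle B\cdot,\cdot\rangle$ on $X_h \times Y_h$, measured in the discrete norm $\|\cdot\|_{X_h}$ from \eqref{eq: discrete norm} and in $\|\cdot\|_Y$; existence, uniqueness, and the stated bound then follow at once from the standard theory. Concretely, I would establish: continuity of $A$ on $X_h$; continuity of $B$ on $X_h \times Y_h$; coercivity of $A$ on the discrete kernel $Z_h \coloneqq \{\eta_h \in X_h : \langle B\eta_h, v_h'\rangle = 0 \ \forall v_h' \in Y_h\}$; and the inf-sup condition for $B$ over $X_h \times Y_h$ in the $\|\cdot\|_{X_h}$, $\|\cdot\|_Y$ norms.

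Continuity of $A$ is immediate from the boundedness in \eqref{A-pd}, since both $\|\sigma_h\|_\Omega$ and $\|\omega_h\|_\Omega$ are controlled by $\|\eta_h\|_{X_h}$. For continuity of $B$, the only nonstandard term is $(\div \ell\omega_h, r_h')_\Omega$; here the key observation is that $r_h' \in R_h$ and $\Pi_R$ is the $L^2$-projection onto $R_h$, so $(\div \ell\omega_h, r_h')_\Omega = (\Pi_R \div \ell\omega_h, r_h')_\Omega \le \|\Pi_R \div \ell\omega_h\|_\Omega \|r_h'\|_\Omega \le \|\eta_h\|_{X_h}\|v_h'\|_Y$. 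The remaining terms $(\div \sigma_h, u_h')_\Omega$ and $(\asym \sigma_h, r_h')_\Omega$ are bounded by $\|\div \sigma_h\|_\Omega$ and $\|\sigma_h\|_\Omega$, respectively. This makes precise the remark that the weakened norm is compatible with continuity of $B$.

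For the inf-sup condition, given $v_h = (u_h, r_h) \in Y_h$ I would test with $\eta_h = (\sigma_h, 0)$, where $\sigma_h \in \Sigma_h$ is the (near-)supremizer from the elasticity-stable condition \eqref{eq:inf-sup}. Since the inf-sup in \Cref{def:elasticity triplet} holds for every $(u_h, r_h)$, applying it to $(-u_h, r_h)$ matches the sign pattern of $B$, namely $-(\div \sigma_h, u_h)_\Omega + (\asym \sigma_h, r_h)_\Omega$. Because $\|(\sigma_h, 0)\|_{X_h} = \|(\sigma_h, 0)\|_X$ (the $\omega$-contributions vanish), this yields $\langle B\eta_h, v_h\rangle \gtrsim \|\eta_h\|_{X_h}\|v_h\|_Y$, as required.

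The main obstacle, and the step that genuinely motivates the weakened norm, is coercivity of $A$ on $Z_h$, because $A$ in \eqref{eq: operator A} controls only the $L^2$-parts $\|\sigma_h\|_\Omega^2 + \|\omega_h\|_\Omega^2$ and none of the divergence contributions in $\|\cdot\|_{X_h}$. I would first characterize the kernel: testing the first block of $B$ against $u_h' = \div \sigma_h$, which is admissible precisely because $\div \Sigma_h \subseteq U_h$, forces $\div \sigma_h = 0$, while the second block gives $\Pi_R \div \ell\omega_h = \Pi_R \asym \sigma_h$. Substituting these into the discrete norm yields $\|\eta_h\|_{X_h}^2 = \|\sigma_h\|_\Omega^2 + \|\omega_h\|_\Omega^2 + \|\Pi_R \asym \sigma_h\|_\Omega^2 \le \|\sigma_h\|_\Omega^2 + \|\omega_h\|_\Omega^2 + \|\asym \sigma_h\|_\Omega^2 \lesssim \|\sigma_h\|_\Omega^2 + \|\omega_h\|_\Omega^2$, using that $\Pi_R$ has unit $L^2$-norm and $\asym$ is bounded. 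Combined with \eqref{A-pd}, this gives $\langle A\eta_h, \eta_h\rangle \gtrsim \|\eta_h\|_{X_h}^2$ on $Z_h$. I would stress that with the full norm $\|\div \ell\omega_h\|_\Omega$ in place of $\|\Pi_R \div \ell\omega_h\|_\Omega$, this term could not be absorbed by the coercive part of $A$, so the weakened norm is essential to closing the coercivity estimate.
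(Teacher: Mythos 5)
Your proposal is correct and follows essentially the same route as the paper: verification of the Brezzi conditions with the identical kernel characterization ($\div \sigma_h = 0$ and $\Pi_R \div \ell \omega_h = \Pi_R \asym \sigma_h$), the same coercivity estimate absorbing $\| \Pi_R \asym \sigma_h \|_\Omega$ into $\| \sigma_h \|_\Omega$, and the same reduction of the inf-sup condition to the elasticity-stable triplet by testing with pairs $(\sigma_h, 0)$, for which $\| (\sigma_h,0) \|_{X_h} = \| (\sigma_h,0) \|_X$. Your additional details — writing $(\div \ell \omega_h, r_h')_\Omega = (\Pi_R \div \ell \omega_h, r_h')_\Omega$ explicitly in the continuity of $B$, and flipping the sign via $(-u_h, r_h)$ in the inf-sup — are points the paper leaves implicit, and both are handled correctly.
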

\begin{proof}
    We verify the Brezzi conditions for saddle point problems \cite[Sec.~4.2.3]{boffi2013mixed}. First, we verify that $A$ and $B$ are continuous in the relevant norms
    \begin{align*}
        \langle A \eta_h, \eta_h' \rangle
        &\lesssim \| \eta_h \|_\Omega \| \eta_h' \|_\Omega
        \le \| \eta_h \|_{X_h} \| \eta_h' \|_{X_h}, &
        \forall \eta_h, \eta_h' &\in X_h, \\
        \langle B \eta_h, r_h \rangle
        &\le \| \div \sigma_h \|_\Omega \| u_h \|_\Omega
        + \| \asym \sigma_h \|_\Omega \| r_h \|_\Omega
        + \| \Pi_R \div \ell \omega_h \|_\Omega \| r_h \|_\Omega 
        \le \| \eta_h \|_{X_h} \| v_h \|_Y, &
        \forall (\eta_h, v_h) &\in X_h \times Y_h.
    \end{align*}

    Next, we verify the coercivity of $A$ on the kernel of $B$. Let $\eta_h = (\sigma_h, \omega_h)$ satisfy $\langle B \eta_h, v_h \rangle = 0$ for all $v_h \in Y_h$. Then $\div \Sigma_h \subseteq U_h$ implies
    \begin{align} \label{eq: Ker B}
        \div \sigma_h &= 0, &
        \Pi_R \div \ell \omega_h &= \Pi_R \asym \sigma_h,
    \end{align}
    which implies that
    \begin{align}
        \| \div \sigma_h \|_\Omega^2 + 
        \| \Pi_R \div \ell \omega_h \|_\Omega^2
        = \| \Pi_R \asym \sigma_h \|_\Omega^2
        \lesssim \| \sigma_h \|_\Omega^2.
    \end{align}
    Thus, for $\eta_h$ in the kernel of $B$, we have
    \begin{align}
        \langle A \eta_h, \eta_h \rangle
        \eqsim \| \sigma_h \|_\Omega^2 + \| \omega_h \|_\Omega^2
        \gtrsim 
        \| \sigma_h \|_\Omega^2 
        + \| \div \sigma_h \|_\Omega^2  
        + \| \omega_h \|_\Omega^2
        +\| \Pi_R \div \ell \omega_h \|_\Omega^2
        = \| \eta_h \|_{X_h}^2.
    \end{align}

    We continue by showing that $B$ satisfies an inf-sup condition on $X_h \times Y_h$ in the relevant norms. As noted in \cite[Lem.~4.5]{boon2025mixed}, this follows immediately if $\Sigma_h \times U_h \times R_h$ is elasticity-stable. Using the fact that $\| (\sigma_h, 0) \|_{X_h} = \| (\sigma_h, 0) \|_X$, we derive
    \begin{align} \label{eq: infsup from elasticity}
        \inf_{v_h \in Y_h} \sup_{\eta_h \in X_h}
        \frac{
            \langle B \eta_h, v_h \rangle
        }{\| \eta_h \|_{X_h} \| v_h \|_Y}
        \ge
        \inf_{v_h \in Y_h} \sup_{(\sigma_h, 0) \in X_h}
        \frac{
            \langle B (\sigma_h, 0), v_h \rangle
        }{\| (\sigma_h, 0) \|_{X_h} \| v_h \|_Y}
        =
        \inf_{(u_h, r_h) \in U_h \times R_h} \sup_{\sigma_h \in \Sigma_h}
        \frac{
            (\div \sigma_h, u_h)_\Omega + (\asym \sigma_h, r_h)_\Omega
        }{\| (\sigma_h, 0) \|_X \| (u_h, r_h) \|_Y}
        \gtrsim 1.
    \end{align}
\end{proof}

The stability constant in \Cref{thm: stability full MFE} is independent of $\ell \ge 0$. This robustness of the mixed finite element method with respect to the length scale $\ell$ is captured in the following definition.

\begin{definition} \label{def: l-robust}
    A discretization method for the linear Cosserat system is \emph{$\ell$-robust} if it is stable in the limit case $\ell = 0$. 
\end{definition}

To prove convergence of the mixed finite element method, we require aspects of the finite element spaces that are not available at this stage. We therefore postpone those results to \Cref{sec:simple_scheme,sec:reducible_scheme,sec:RT1-L1,sec:RT1-P1}, where these spaces are specified.

\subsection{Multipoint stress mixed finite element methods based on low-order integration}

To formulate the multipoint stress mixed finite element method, we introduce discrete inner products that employ low-order quadrature rules. We will use two discrete inner products in particular, defined as follows.

\begin{definition} \label{def:quadrature}
    For an element $\Delta \in \Omega_h$, let $|\Delta|$ be its measure, $\mathcal{N}(\Delta)$ its node set, and $x_\Delta$ the element center.
    For $\phi, \phi' \in \P2$, we introduce the following inner products and induced norms
    \begin{align}
    	(\phi , \phi' )_{Q_1} &\coloneqq
        \sum_{\Delta \in \Omega_h} \frac{|\Delta|}{d + 1}
        \sum_{x_i \in \mathcal{N}(\Delta)} \phi_\Delta(x_i) \cdot \phi'_\Delta(x_i),  &
        \| \phi \|_{Q_1} &\coloneqq \sqrt{(\phi, \phi)_{Q_1}}, \label{eq:Q1}\\
        (\phi , \phi' )_{Q_2} &\coloneqq
        \frac{1}{d + 2}\left((\phi , \phi' )_{Q_1} +
        (d + 1) \sum_{\Delta \in \Omega_h} 
             |\Delta| \phi(x_\Delta) \cdot \phi'(x_\Delta)            
        \right), &
        \| \phi \|_{Q_2} &\coloneqq \sqrt{(\phi, \phi)_{Q_2}}. \label{eq:Q2} 
        % \nonumber \\
        % &=
        % \sum_{\Delta \in \Omega_h} \frac{|\Delta|}{d + 2}
        % \left(
        %     (d + 1) (\phi_\Delta(x_\Delta) \cdot \phi_\Delta'(x_\Delta)) + 
        %     \frac{1}{d + 1} \sum_{x_i \in \mathcal{N}(\Delta)} \phi_\Delta(x_i) \cdot \phi'_\Delta(x_i)
        % \right), &
    \end{align}
    Here, $\phi_\Delta \coloneqq \phi|_\Delta$ denotes a restriction to the element.
\end{definition}

These discrete inner products have several important properties, which we summarize in the following lemmas.

\begin{lemma}[{\cite[Thm.~4.1]{lee2018local}}] \label{lem: quadrature_1}
	On $\P1$, the norm $\| \cdot \|_{Q_1}$ is equivalent to the $L^2(\Omega)$-norm and the integration rule from \eqref{eq:Q1} is exact for piecewise linear functions. In other words
	\begin{align}
		\| \phi_1 \|_{Q_1} &\eqsim \| \phi_1 \|_\Omega, &
		(\phi_1, \varphi_0)_{Q_1} &= 
		(\phi_1, \varphi_0)_\Omega, &
		\forall \phi_1 &\in \P1, \forall \varphi_0 \in \P0.
	\end{align}
\end{lemma}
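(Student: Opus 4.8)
The plan is to treat the two claims separately, establishing exactness first (since it immediately yields the bracket identity) and then deducing the norm equivalence by a scaling argument to a fixed reference simplex.

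For exactness, I would argue element by element using the centroid identity. On a simplex $\Delta$ with node set $\mathcal{N}(\Delta)$ and barycenter $x_\Delta = \frac{1}{d+1}\sum_{x_i \in \mathcal{N}(\Delta)} x_i$, every affine function $p$ satisfies $\int_\Delta p = |\Delta|\, p(x_\Delta)$ (the mean value of an affine function equals its value at the centroid), and, being affine, $p(x_\Delta) = \frac{1}{d+1}\sum_{x_i} p(x_i)$. Combining these gives $\int_\Delta p = \frac{|\Delta|}{d+1}\sum_{x_i} p(x_i)$, which is precisely the contribution of $\Delta$ to the $Q_1$ rule; hence the vertex rule is exact for affine functions. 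For the bracket identity, I would note that for $\phi_1 \in \P1$ and $\varphi_0 \in \P0$ the product $\phi_1 \varphi_0$ is affine on each element, so the rule integrates it exactly, and summing over $\Delta \in \Omega_h$ yields $(\phi_1, \varphi_0)_{Q_1} = (\phi_1, \varphi_0)_\Omega$. The exactness statement for piecewise linear integrands is the special case $\varphi_0 = 1$.

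For the norm equivalence, I would first check that $\| \cdot \|_{Q_1}$ restricted to a single simplex is genuinely a norm on affine functions: the square $\frac{|\Delta|}{d+1}\sum_{x_i} \phi(x_i)^2$ vanishes only if $\phi$ is zero at all $d+1$ vertices, which forces $\phi \equiv 0$ by affineness. Then I would pass to a fixed reference simplex $\hat\Delta$ through the affine map $F_\Delta : \hat\Delta \to \Delta$. Writing $\hat\phi = \phi \circ F_\Delta$, both quantities transform by the same Jacobian factor: $\int_\Delta \phi^2 = |\det DF_\Delta|\int_{\hat\Delta} \hat\phi^2$, while, since vertices map to vertices, $\frac{|\Delta|}{d+1}\sum_{x_i} \phi(x_i)^2 = \frac{|\Delta|}{|\hat\Delta|} \cdot \frac{|\hat\Delta|}{d+1}\sum \hat\phi(\hat x_i)^2$. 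As both carry the factor $|\Delta|/|\hat\Delta| = |\det DF_\Delta|$, the ratio $\|\phi\|_{Q_1,\Delta}^2 / \|\phi\|_{L^2(\Delta)}^2$ equals the corresponding ratio on $\hat\Delta$. On the finite-dimensional space of affine functions on $\hat\Delta$ the two norms are equivalent with constants $c_1, c_2$ depending only on $d$, and by the scale invariance just noted these constants hold on every element. Summing $\|\phi_1\|_{Q_1}^2 = \sum_\Delta \|\phi_1\|_{Q_1,\Delta}^2$ and $\|\phi_1\|_\Omega^2 = \sum_\Delta \|\phi_1\|_{L^2(\Delta)}^2$ over the tessellation then gives $\|\phi_1\|_{Q_1} \eqsim \|\phi_1\|_\Omega$ with mesh-independent constants.

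The computations are routine; the one point requiring care is the scale invariance of the norm ratio, which is what lets the reference-element equivalence transfer with uniform constants. I would verify explicitly that the vertex weights scale by exactly $|\Delta|/|\hat\Delta|$ under the affine pullback, so that both norms share a single Jacobian factor; once this is in place the equivalence constants depend only on the dimension through the fixed reference simplex, and shape-regularity is not even needed for this particular estimate.
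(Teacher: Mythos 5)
Your proof is correct. Note that the paper itself gives no argument for this lemma at all: it is quoted directly from the reference \cite[Thm.~4.1]{lee2018local}, so there is no ``paper proof'' to compare against line by line. What you have produced is a self-contained elementary proof of the cited result, and both halves hold up. The exactness part is exactly the standard argument: for affine $p$ on a simplex, $\int_\Delta p = |\Delta|\,p(x_\Delta)$ and $p(x_\Delta) = \frac{1}{d+1}\sum_i p(x_i)$, and since $\phi_1\varphi_0$ is affine on each element (a constant times an affine function), summing over elements gives the bracket identity; the restriction $\phi_\Delta = \phi|_\Delta$ in the paper's Definition of $Q_1$ is consistent with your element-by-element treatment of discontinuous functions. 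The norm-equivalence part is also sound, and your key observation — that both $\|\phi\|_{L^2(\Delta)}^2$ and $\frac{|\Delta|}{d+1}\sum_i \phi(x_i)^2$ pick up the identical factor $|\det DF_\Delta| = |\Delta|/|\hat\Delta|$ under affine pullback, so the reference-element equivalence constants transfer to every element without any shape-regularity assumption — is the right one and is stated carefully. Indeed one can even make the constants explicit: writing $\phi = \sum_i \phi(x_i)\lambda_i$ in barycentric coordinates and using $\int_\Delta \lambda_i\lambda_j = \frac{(1+\delta_{ij})|\Delta|}{(d+1)(d+2)}$ yields $\frac{1}{d+2}\|\phi\|_{Q_1,\Delta}^2 \le \|\phi\|_{L^2(\Delta)}^2 \le \|\phi\|_{Q_1,\Delta}^2$, confirming your finite-dimensionality argument with dimension-only constants.
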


\begin{lemma}[{\cite{egger2020second}}] \label{lem: quadrature_2}
    On $\P2$, the norm $\| \cdot \|_{Q_2}$ is equivalent to the $L^2(\Omega)$-norm and the integration rule from \eqref{eq:Q2} is exact for piecewise quadratic functions. In other words
    \begin{align}
        \| \phi_2 \|_{Q_2} &\eqsim \| \phi_2 \|_\Omega, &
        (\phi_2, \varphi_0)_{Q_1} &= 
        (\phi_2, \varphi_0)_\Omega, &
        \forall \phi_2 &\in \P2, \forall \varphi_0 \in \P0.
    \end{align}
\end{lemma}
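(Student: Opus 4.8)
The plan is to establish the two claimed properties of the norm $\| \cdot \|_{Q_2}$ on $\P2$ separately, following the template set by \Cref{lem: quadrature_1} for the $Q_1$ rule. The second claim, namely the exactness identity $(\phi_2, \varphi_0)_{Q_1} = (\phi_2, \varphi_0)_\Omega$ for $\phi_2 \in \P2$ and $\varphi_0 \in \P0$, is essentially a statement about a quadrature rule on a single simplex, so I would reduce to one element $\Delta$. On $\Delta$, the product $\phi_2 \varphi_0$ is a polynomial of degree at most $2$ (since $\varphi_0$ is constant), and the key observation is that the $d+1$ vertices together with the element center, weighted as in \eqref{eq:Q2}, form a quadrature rule that is exact for quadratics. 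I would verify this by checking the rule integrates the monomials $1$, $x_i$, and $x_i x_j$ exactly; equivalently, one checks exactness against the barycentric monomials $\lambda_a$ and $\lambda_a \lambda_b$, using the standard simplex integration formulas $\int_\Delta \lambda_a = |\Delta|/(d+1)$ and $\int_\Delta \lambda_a \lambda_b = |\Delta|(1 + \delta_{ab})/((d+1)(d+2))$. The centroid contribution, where each barycentric coordinate equals $1/(d+1)$, supplies exactly the correction needed to upgrade the vertex rule from linear to quadratic exactness.

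For the norm equivalence $\| \phi_2 \|_{Q_2} \eqsim \| \phi_2 \|_\Omega$, the cleanest route is a scaling and finite-dimensionality argument on the reference simplex $\hat\Delta$, identical in spirit to the proof of \Cref{lem: quadrature_1}. Since $(\phi_2, \phi_2)_{Q_2}$ and $\| \phi_2 \|_\Omega^2$ are both quadratic forms on the finite-dimensional space $\P2(\hat\Delta)$, it suffices to show that $\| \cdot \|_{Q_2}$ is a genuine norm there, i.e. that $\| \hat\phi_2 \|_{Q_2} = 0$ forces $\hat\phi_2 = 0$. This follows because the $d+1$ vertices and the centroid are a unisolvent set: for $d=2$ these six points (three vertices plus one center) determine a quadratic uniquely only after noting that the $Q_2$ rule assigns strictly positive weight to each, so vanishing at all of them plus vanishing of the seminorm forces the polynomial to vanish. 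Equivalence on a general element then follows by the affine map to $\hat\Delta$, with the Jacobian factors $|\Delta|$ appearing on both sides and cancelling in the ratio; shape-regularity guarantees the hidden constants are uniform in $h$. Summing over elements gives the global equivalence.

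The main obstacle, and the step deserving the most care, is verifying that the $Q_2$ weights genuinely produce a quadrature rule exact on all of $\P2(\Delta)$ rather than merely on a convenient subspace. This is where the precise coefficients $\tfrac{1}{d+2}$ and $(d+1)$ in \eqref{eq:Q2} matter: one must confirm that combining the vertex rule $(\cdot,\cdot)_{Q_1}$ (exact only to linear order) with the centroid evaluation, in exactly these proportions, cancels the quadratic error. I would pin this down by computing the rule's action on a basis-independent generating set of quadratics — the squares $\lambda_a^2$ and cross terms $\lambda_a \lambda_b$ — and matching against the exact integrals above. If these match, exactness on $\P2$ follows by linearity together with partition-of-unity relations among the $\lambda_a$. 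I would also remark that the second identity in the lemma, $(\phi_2, \varphi_0)_{Q_1} = (\phi_2, \varphi_0)_\Omega$, is stronger than one might expect since it uses the \emph{cheaper} $Q_1$ rule yet remains exact for $\phi_2 \in \P2$ tested against constants; this holds because $\phi_2 \varphi_0$ restricted to a line through the simplex contributes only through its vertex and edge-midpoint behaviour in a way the vertex rule already captures when the second factor is constant, and I would double-check this claim carefully against the $d=3$ case where it is least obvious, since it is the one point where the statement could fail if the constant factor were not exactly $\varphi_0 \in \P0$.
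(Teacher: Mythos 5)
Your first paragraph is fine: reducing to a single simplex and checking the combined vertex--centroid rule on the barycentric monomials $\lambda_a$, $\lambda_a^2$, $\lambda_a\lambda_b$ against the standard formulas $\int_\Delta \lambda_a = |\Delta|/(d+1)$, $\int_\Delta \lambda_a\lambda_b = |\Delta|(1+\delta_{ab})/((d+1)(d+2))$ is exactly the right computation, and it confirms that the weights in \eqref{eq:Q2} make the rule exact on $\P2$. (The paper itself does not prove the lemma; it quotes it from \cite{egger2020second}.) But this same computation contradicts your closing paragraph. The identity displayed in the lemma with the $Q_1$ inner product is \emph{false}: the vertex rule alone is not exact on quadratics, e.g.\ on a triangle $(\lambda_1^2, 1)_{Q_1}$ contributes $|\Delta|/3$ per element while $\int_\Delta \lambda_1^2 = |\Delta|/6$. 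The subscript $Q_1$ in the display is a typo for $Q_2$; both the surrounding prose (``the integration rule from \eqref{eq:Q2}'') and the way the lemma is used --- to verify \eqref{eq: exactness quadrature Q} for the $Q_2$-based operator $A_h$ in \eqref{eq: A_h RT1}, cf.\ \Cref{thm:stab/conv MSMFE RT1L1,thm:stab/conv MSMFE RT1P1} --- require the $Q_2$ rule. Your attempted justification of the literal $Q_1$ statement (the ``restricted to a line through the simplex'' argument) defends a false claim and must be removed; the correct move is to flag the typo and prove the $Q_2$ identity, which your first paragraph already does.

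The norm-equivalence part contains a more serious gap, because the unisolvence claim on which it rests is wrong. The quadrature nodes are the $d+1$ vertices plus the centroid, i.e.\ $d+2$ points (four in 2D, not ``six''), whereas $\dim \P2(\Delta) = (d+1)(d+2)/2$, which is $6$ in 2D and $10$ in 3D. So vanishing at all nodes does \emph{not} force a quadratic to vanish: the function $q \coloneqq \lambda_1\lambda_2 - \lambda_2\lambda_3 \in \P2$ vanishes at every vertex and at the centroid (where all $\lambda_a = 1/(d+1)$), hence $\|q\|_{Q_2} = 0$ while $\|q\|_\Omega > 0$. Consequently $\|\cdot\|_{Q_2}$ is not even a norm on $\P2$, and no proof of the equivalence on all of $\P2$ can exist --- the lemma, read literally, overstates the result of \cite{egger2020second}. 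The equivalence holds only on subspaces for which the nodal values control the function, and that is the form in which it is needed in the paper: the lemma is only ever applied to $\Sigma_h, W_h \subseteq \RT1$, and for an $\RT1$ field the count is $d(d+2) = \dim \RT1$ conditions per element, with a direct computation (this is the content of the cited reference) showing that an $\RT1$ field vanishing at the $d+2$ nodes is identically zero. A correct repair of your argument is therefore to restrict the equivalence statement to $\RT1$ (or any subspace with this trivial-kernel property), after which your scaling and shape-regularity argument for uniformity of the constants goes through unchanged.
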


We will consider discrete spaces $\Sigma_h \subseteq \P2^{d \times d}$, $W_h \subseteq \P2^{d \times k_d}$, $U_h \subseteq \P1^d$, and $R_h \subseteq \P1^{k_d}$. By slightly abusing notation, we extend the discrete inner products of \Cref{def:quadrature} to these tensor- and vector-valued discrete spaces.
The quadrature rule allows us to construct $A_h: X_h \to X_h'$ as an approximation of the operator $A$ from \eqref{eq: operator A}:
\begin{align} \label{eq: A_h}
    \langle A_h \eta_h , \eta_h'  \rangle
    &\coloneqq (\mathcal{A}_\sigma \sigma_h, \sigma_h')_Q + (\mathcal{A}_\omega \omega_h, \omega_h')_Q.
\end{align}
The multipoint stress mixed finite element (MS-MFE) method considered in three of the four cases we study is: find $(\hat \eta_h, \hat v_h) \in X_h \times Y_h$ such that
\begin{align} \label{eq: system MS-MFE}
    \langle A_h \hat \eta_h, \eta_h' \rangle
    - \langle B \eta_h', \hat v_h \rangle
    + \langle B \hat \eta_h, v_h' \rangle
    &= 
    \langle g, \eta_h' \rangle
    + \langle f, v_h' \rangle, &
    \forall (\eta_h', v_h') &\in X_h \times Y_h.
\end{align}
In one of the four cases we study, cf. \Cref{sec:reducible_scheme}, a quadrature rule will be applied also to the rotation bilinear forms, which results in a modified operator $B_h$. The stability and convergence theorems presented next will not be applicable for this method and specific analysis will be developed in \Cref{sec:reducible_scheme}.

\begin{theorem}[Stability] \label{thm: stability MS-MFE}
    If the assumptions of \Cref{thm: stability full MFE} are met, $A_h$ is defined by \eqref{eq: A_h}, and 
    \begin{align} \label{eq: norm equivalence Q}  
        \| \eta_h \|_Q &\eqsim \| \eta_h \|_\Omega, & 
        \forall \eta_h &\in \Sigma_h,
    \end{align}
    then the MS-MFE method is stable, i.e. Problem \eqref{eq: system MS-MFE} admits a unique solution that satisfies the bound:
    \begin{align}\label{stab-bound}
        \| \hat \eta_h \|_{X_h} + \| \hat v_h \|_Y \lesssim \| g \|_{X_h'} + \| f \|_{Y'}.
    \end{align}
\end{theorem}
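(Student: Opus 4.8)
The plan is to re-run the Brezzi verification from the proof of \Cref{thm: stability full MFE}, exploiting the fact that the passage from \eqref{eq: system full-MFE} to \eqref{eq: system MS-MFE} modifies only the $A$-block: the bilinear form $B$, its kernel, and its inf-sup condition \eqref{eq: infsup from elasticity} are left untouched. Consequently the inf-sup condition for $B$ and the kernel characterization \eqref{eq: Ker B} carry over verbatim from the proof of \Cref{thm: stability full MFE}, and it remains only to re-establish, for the quadrature-based operator $A_h$ of \eqref{eq: A_h}, the two properties that involve $A$, namely continuity in $\| \cdot \|_{X_h}$ and coercivity on $\ker B$.

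For continuity I would write $\langle A_h \eta_h, \eta_h' \rangle = (\mathcal{A}_\sigma \sigma_h, \sigma_h')_Q + (\mathcal{A}_\omega \omega_h, \omega_h')_Q$ and apply the Cauchy--Schwarz inequality in the discrete inner product, bounding the two terms by $\| \mathcal{A}_\sigma \sigma_h \|_Q \| \sigma_h' \|_Q$ and $\| \mathcal{A}_\omega \omega_h \|_Q \| \omega_h' \|_Q$. Since the material tensors have constant coefficients on each element, $\mathcal{A}_\sigma \sigma_h$ and $\mathcal{A}_\omega \omega_h$ remain in the same element-wise polynomial space as $\sigma_h$ and $\omega_h$, so the norm equivalence of \Cref{lem: quadrature_1,lem: quadrature_2} (abstracted in \eqref{eq: norm equivalence Q}) applies to them as well. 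Combined with the boundedness of $\mathcal{A}_\sigma, \mathcal{A}_\omega$ from \eqref{A-pd}, this yields $\langle A_h \eta_h, \eta_h' \rangle \lesssim \| \eta_h \|_\Omega \| \eta_h' \|_\Omega \le \| \eta_h \|_{X_h} \| \eta_h' \|_{X_h}$.

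The crux is coercivity of $A_h$ on $\ker B$, and here I would use that the quadrature rules of \Cref{def:quadrature} are weighted sums over element nodes and centers with strictly positive weights. The pointwise positive-definiteness in \eqref{A-pd}, i.e. $\mathcal{A}_\sigma \tau : \tau \gtrsim \tau : \tau$ evaluated at each quadrature point, then propagates through the positively weighted sum to give $(\mathcal{A}_\sigma \sigma_h, \sigma_h)_Q \gtrsim \| \sigma_h \|_Q^2$, and similarly for the couple-stress term. Invoking \eqref{eq: norm equivalence Q} converts this into $\langle A_h \eta_h, \eta_h \rangle \gtrsim \| \sigma_h \|_\Omega^2 + \| \omega_h \|_\Omega^2$. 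For $\eta_h \in \ker B$ the identities \eqref{eq: Ker B} give $\div \sigma_h = 0$ and $\Pi_R \div \ell \omega_h = \Pi_R \asym \sigma_h$, so exactly as in the proof of \Cref{thm: stability full MFE} the two remaining terms of $\| \eta_h \|_{X_h}^2$ are controlled by $\| \sigma_h \|_\Omega^2$, upgrading the estimate to $\langle A_h \eta_h, \eta_h \rangle \gtrsim \| \eta_h \|_{X_h}^2$.

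With continuity and kernel-coercivity of $A_h$ together with the unchanged inf-sup condition for $B$, the Brezzi theory \cite[Sec.~4.2.3]{boffi2013mixed} delivers existence, uniqueness, and the a priori bound \eqref{stab-bound}. I expect the coercivity step to be the main obstacle: one must confirm that the low-order quadrature does not destroy the pointwise ellipticity of $\mathcal{A}_\sigma$ and $\mathcal{A}_\omega$. This hinges precisely on the positivity of the quadrature weights, on the assumed norm equivalence \eqref{eq: norm equivalence Q}, and on $\mathcal{A}_\sigma \sigma_h$ remaining in the polynomial space for which \Cref{lem: quadrature_1,lem: quadrature_2} hold; a quadrature with degenerate weights, or a material tensor leaving that space, would break the argument.
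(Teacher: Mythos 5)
Your proof is correct and takes essentially the same route as the paper's: since $B$ is unchanged, only the continuity and coercivity of $A_h$ require re-verification, and both follow—exactly as you argue—from the discrete Cauchy--Schwarz inequality with positive quadrature weights, the norm equivalence \eqref{eq: norm equivalence Q}, and the kernel identities \eqref{eq: Ker B}, before invoking the Brezzi saddle-point theory. One caveat: your continuity step invokes element-wise constant material coefficients so that $\mathcal{A}_\sigma \sigma_h$ stays in the polynomial space covered by \eqref{eq: norm equivalence Q}, an hypothesis not present in the theorem; this is avoidable by instead applying the pointwise bound from \eqref{A-pd} at each quadrature node inside the weighted sum, which yields $(\mathcal{A}_\sigma \sigma_h, \sigma_h')_Q \lesssim \| \sigma_h \|_Q \| \sigma_h' \|_Q$ directly and then only requires \eqref{eq: norm equivalence Q} for $\sigma_h, \sigma_h'$ themselves, which is how the paper's one-line continuity estimate should be read.
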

\begin{proof}
Since the operator $B$ remains unchanged with respect to the MFE Problem \eqref{eq: system full-MFE}, we only need to consider the continuity and coercivity of $A_h$. Both follow by the arguments from \Cref{thm: stability full MFE}, combined with the norm equivalence \eqref{eq: norm equivalence Q}. For clarity, we demonstrate its continuity:
    \begin{align*}
        \langle A_h \eta_h, \eta_h' \rangle
        &\lesssim \| \eta_h \|_Q \| \eta_h' \|_Q
        \eqsim \| \eta_h \|_\Omega \| \eta_h' \|_\Omega
        \le \| \eta_h \|_{X_h} \| \eta_h' \|_{X_h}, &
        \forall \eta_h, \eta_h' &\in X_h,
    \end{align*}
    The stability result is now obtained by invoking saddle point theory \cite[Sec.~4.2.3]{boffi2013mixed}.
% Taking $\eta_h' = \hat\eta_h$ and $v_h' = \hat v_h$ in \eqref{eq: system BDM P0} and using \eqref{A-pd} and \Cref{lem: quadrature_1} implies
% %
% \begin{equation}\label{stab1}
%   \|\hat \sigma_h\|_\Omega + \|\hat \omega_h\|_\Omega \lesssim \| g \|_{X_h'}
%   + \| f \|_{Y'}
%   + \epsilon \|\hat v_h\|_Y.
% \end{equation}
% %
% for any $\epsilon > 0$. Using the inf-sup condition \eqref{eq:inf-sup}, together with the equation in \eqref{eq: system BDM P0} with test function $\sigma_h'$, cf. \eqref{eq:weak-1}, we obtain
% %
% \begin{equation}\label{stab2}
% \|\hat v_h\|_Y \lesssim \|\hat \sigma_h\|_\Omega + \|g_\sigma\|_\Omega.
% \end{equation}
% %
% The equations in \eqref{eq: system BDM P0} with test functions $u_h'$ and $r_h'$, cf. \eqref{eq:weak-3} and \eqref{eq:weak-3}, give
% %
% \begin{equation}\label{stab3}
%   \|\div \hat \sigma_h\|_\Omega \lesssim \|f_\sigma\|_\Omega, \quad
%   \|\Pi_R \div \ell \hat \omega_h\|_\Omega \lesssim \|\hat \sigma_h\|_\Omega + \|f_\omega\|_\Omega.
% \end{equation}
% %
% Combining \eqref{stab1}--\eqref{stab3} and choosing $\epsilon$ small enough results in \eqref{stab-bound}. Since the problem is linear and finite dimensional, \eqref{stab-bound} implies that problem \eqref{eq: system BDM P0} has a unique solution.
\end{proof}

\begin{theorem}[Convergence] \label{thm:conv MS-MFE}
    Let the conditions of \Cref{thm: stability MS-MFE} be met and let the quadrature rule be such that
    \begin{align} \label{eq: exactness quadrature Q}
        (\eta_h, \phi_0)_Q &= (\eta_h, \phi_0)_\Omega, &
        \forall \eta_h &\in X_h, \phi_0 \in \P0^{d \times d} \times \P0^{d \times k_d}.
    \end{align}
    Let the solution $(\eta, v)$ to \eqref{eq:cosserat_weak} be sufficiently regular and let the mixed finite element method \eqref{eq: system full-MFE} satisfy the linear convergence estimate
    \begin{align} \label{eq: assumed conv MFE}
        \| \eta_h - \eta \|_\Omega + \| v_h - v \|_Y \lesssim h.
    \end{align}
    Then the MS-MFE method \eqref{eq: system MS-MFE} converges linearly as well, i.e.
    \begin{align}
        \| \hat \eta_h - \eta \|_{X_h} + \| \hat v_h - v \|_Y \lesssim h.
    \end{align}
\end{theorem}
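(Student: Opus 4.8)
The plan is to estimate the error between the MS-MFE solution $(\hat\eta_h, \hat v_h)$ and the full MFE solution $(\eta_h, v_h)$ from \eqref{eq: system full-MFE}, and then invoke the triangle inequality together with the assumed convergence \eqref{eq: assumed conv MFE}. Concretely, I would write $\| \hat\eta_h - \eta \|_{X_h} \le \| \hat\eta_h - \eta_h \|_{X_h} + \| \eta_h - \eta \|_{X_h}$, and similarly for $v$. The second term on each is controlled by \eqref{eq: assumed conv MFE}, since $\| \cdot \|_{X_h} \le \| \cdot \|_X$ (the discrete norm uses the weaker $\| \Pi_R \div \ell \omega \|_\Omega \le \| \div \ell \omega \|_\Omega$) and $\| \eta_h - \eta \|_{X_h}$ can be bounded by $\| \eta_h - \eta \|_\Omega$ plus divergence terms; here I must be slightly careful, since \eqref{eq: assumed conv MFE} only gives the $L^2$ norm $\| \eta_h - \eta \|_\Omega$, so the divergence contributions to $\| \eta_h - \eta \|_{X_h}$ need a separate (standard) argument. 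So the crux reduces to bounding the \emph{quadrature error} $\| \hat\eta_h - \eta_h \|_{X_h} + \| \hat v_h - v_h \|_Y$ by $h$.

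To bound this difference, I would subtract \eqref{eq: system MS-MFE} from \eqref{eq: system full-MFE}. Since $B$ is identical in both systems, the difference $(\hat\eta_h - \eta_h, \hat v_h - v_h)$ satisfies a saddle-point problem whose only nonzero data is the consistency error of the quadrature, namely the functional $\eta_h' \mapsto \langle A_h \hat\eta_h, \eta_h' \rangle - \langle A \hat\eta_h, \eta_h' \rangle = (\mathcal{A}_\sigma \hat\sigma_h, \sigma_h')_Q - (\mathcal{A}_\sigma \hat\sigma_h, \sigma_h')_\Omega$ plus the analogous $\omega$ term. By the stability of the full MFE system (\Cref{thm: stability full MFE}, whose inf-sup and coercivity hold for the identical operator $B$), I get
\begin{align*}
    \| \hat\eta_h - \eta_h \|_{X_h} + \| \hat v_h - v_h \|_Y
    \lesssim \sup_{\eta_h' \in X_h}
    \frac{\langle (A_h - A)\hat\eta_h, \eta_h' \rangle}{\| \eta_h' \|_{X_h}}.
\end{align*}
It remains to show this quadrature-consistency error is $O(h)$. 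The mechanism is the exactness property \eqref{eq: exactness quadrature Q}: for piecewise-constant test data the quadrature is exact, so on each element I can subtract the $L^2$ projection $\phi_0 = \Pi_0 \eta_h'$ and write the error as $(\mathcal{A}_\sigma \hat\sigma_h, \sigma_h' - \Pi_0\sigma_h')_Q - (\mathcal{A}_\sigma \hat\sigma_h, \sigma_h' - \Pi_0\sigma_h')_\Omega$. One then also subtracts a constant from $\mathcal{A}_\sigma \hat\sigma_h$, uses the element-wise approximation estimate $\| \phi - \Pi_0 \phi \|_{\Delta} \lesssim h_\Delta \| \nabla \phi \|_\Delta$ together with an inverse inequality, and the norm equivalences of \Cref{lem: quadrature_1,lem: quadrature_2} via \eqref{eq: norm equivalence Q}, to extract one factor of $h$. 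The stability bound \eqref{stab-bound} controls $\| \hat\eta_h \|_{X_h}$ uniformly, and finiteness of the data gives $\| \hat\sigma_h \|_\Omega \lesssim 1$.

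The main obstacle I anticipate is the quadrature-consistency estimate itself: making precise the element-local argument that $\langle (A_h - A)\hat\eta_h, \eta_h'\rangle \lesssim h \| \hat\eta_h \|_\Omega \| \eta_h' \|_\Omega$ requires combining the exactness \eqref{eq: exactness quadrature Q} on constants, an inverse inequality to absorb the resulting gradient of $\eta_h'$, and a careful treatment of the non-constant coefficient $\mathcal{A}_\sigma$ (which is a fixed matrix, so this part is clean). A secondary subtlety is that the right-hand side of the estimate must be measured in $\| \eta_h' \|_{X_h}$, which dominates $\| \eta_h' \|_\Omega$, so passing from the $L^2$-bound to the dual norm is immediate. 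I expect the $\div$ components of $\| \hat\eta_h - \eta_h\|_{X_h}$ to cause no trouble because $B$ is unchanged, so the divergence of $\hat\sigma_h - \sigma_h$ and the projected divergence of the couple stress are directly controlled by the saddle-point stability rather than by the quadrature error.
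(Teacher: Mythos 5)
Your overall skeleton --- compare $(\hat\eta_h,\hat v_h)$ with the MFE solution $(\eta_h,v_h)$ via an error equation, invoke saddle-point stability, bound the quadrature consistency term, then finish with a triangle inequality --- is the same as the paper's. The genuine gap lies in the step you yourself call the crux: your claimed consistency estimate $\langle (A_h - A)\hat\eta_h, \eta_h'\rangle \lesssim h\,\|\hat\eta_h\|_\Omega\|\eta_h'\|_\Omega$ is false in general, and the mechanism you propose cannot produce it. After subtracting $\Pi_0$ from both arguments, the element-wise bound you obtain is of the form $\|(I-\Pi_0)\hat\sigma_h\|_\Delta\,\|(I-\Pi_0)\sigma_h'\|_\Delta \lesssim h_\Delta\|\nabla\hat\sigma_h\|_\Delta\,\|\sigma_h'\|_\Delta$; the inverse inequality then turns $h_\Delta\|\nabla\hat\sigma_h\|_\Delta$ back into $\|\hat\sigma_h\|_\Delta$, so the factor of $h$ gained from Bramble--Hilbert is exactly consumed, leaving only $\lesssim \|\hat\sigma_h\|_\Omega\|\sigma_h'\|_\Omega = O(1)$. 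This is not an artifact of the bookkeeping: for discrete functions oscillating at the mesh scale (e.g., a sawtooth $\P1$ function with values $\pm 1$ at alternating vertices), the quadrature error $(\phi,\phi)_\Omega - (\phi,\phi)_{Q_1}$ is genuinely $O(1)$ while $\|\phi\|_\Omega = O(1)$. The stability bound \eqref{stab-bound} controls only $\|\hat\sigma_h\|_\Omega$, not its broken gradient, so nothing excludes such behavior a priori.

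The missing idea is that the consistency term must be evaluated at a function that is already known to be $O(h)$-close to the smooth exact solution. The paper therefore splits the error equation the other way around: it writes $A_h\hat\eta_h - A\eta_h = A_h(\hat\eta_h - \eta_h) + (A_h - A)\eta_h$, so that $(\hat\eta_h-\eta_h,\hat v_h - v_h)$ solves the MS-MFE system (stability of \Cref{thm: stability MS-MFE}) with data $g = (A - A_h)\eta_h$, the quadrature error at the \emph{MFE} solution. Then \eqref{eq: exactness quadrature Q} permits inserting $\Pi_0\eta$, the projection of the \emph{exact} solution, and $\langle (A - A_h)(\eta_h - \Pi_0\eta),\eta_h'\rangle \lesssim \left(\|\eta_h - \eta\|_\Omega + \|(I - \Pi_0)\eta\|_\Omega\right)\|\eta_h'\|_\Omega \lesssim h\,\|\eta_h'\|_\Omega$, where the factor $h$ comes from the assumed MFE convergence \eqref{eq: assumed conv MFE} and the approximation property of $\P0$ applied to the smooth $\eta$ --- no inverse inequality, no gradient of a discrete function. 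If you keep your decomposition (consistency error at $\hat\eta_h$) and attempt the same insertion of $\Pi_0\eta$, the right-hand side picks up $\|\hat\eta_h - \eta_h\|_\Omega$, i.e., the very quantity being estimated; it cannot be absorbed into the left-hand side because the stability constant is not less than one, so the argument is circular. (Your secondary concern --- that \eqref{eq: assumed conv MFE} gives only the $L^2$ norm of $\eta_h - \eta$ while the triangle inequality needs the $X_h$ norm --- is shared by the paper's own proof and is harmless in the applications, where the MFE convergence theorems actually establish the full $X_h$-norm estimate.)
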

\begin{proof}
    Following \cite[Thm.~3.2]{lee2018local}, we subtract \eqref{eq: system full-MFE} from \eqref{eq: system MS-MFE} and add the term $\langle (A - A_h) \eta_h, \eta_h' \rangle$ to both sides to obtain
    \begin{align} \label{eq: error equations}
        \langle A_h (\hat \eta_h - \eta_h), \eta_h' \rangle
        - \langle B \eta_h', \hat v_h - v_h \rangle
        + \langle B (\hat \eta_h - \eta_h), v_h' \rangle
        &= 
        \langle (A - A_h) \eta_h, \eta_h' \rangle, &
        \forall (\eta_h', v_h') &\in X_h \times Y_h.
    \end{align}

    Thus the pair $(\hat \eta_h - \eta_h, \hat v_h - v_h) \in X_h \times Y_h$ is the solution to \eqref{eq: system MS-MFE} with right-hand side $g = (A - A_h) \eta_h$ and $f = 0$. This allows us to apply the stability estimate \eqref{stab-bound}:
    \begin{align} \label{eq: diff discrete sols}
        \| \hat \eta_h - \eta_h \|_{X_h} + \| \hat v_h - v_h \|_Y
        &\lesssim \sup_{\eta_h' \in X_h} \frac{\langle (A - A_h) \eta_h, \eta_h' \rangle}{\| \eta_h' \|_{X_h}}.
    \end{align}
    
    Next, we introduce $\Pi_0$ as the $L^2$ projection onto the piecewise constants $\P0^{d \times d} \times \P0^{d \times k_d}$. The identity \eqref{eq: exactness quadrature Q} implies $\langle (A - A_h) \Pi_0 \eta_h, \eta_h' \rangle = 0$. Using this in combination with $\| \eta'_h \|_\Omega \le \| \eta_h' \|_{X_h}$, we derive
    \begin{align} \label{eq: bound diff A}
        \sup_{\eta_h' \in X_h} \frac{\langle (A - A_h) \eta_h, \eta_h' \rangle}{\| \eta_h' \|_{X_h}}
        &\le \sup_{\eta_h' \in X_h} \frac{\langle (A - A_h) \eta_h, \eta_h' \rangle}{\| \eta_h' \|_\Omega}
        = \sup_{\eta_h' \in X_h} \frac{\langle (A - A_h) (\eta_h - \Pi_0 \eta), \eta_h' \rangle}{\| \eta_h' \|_\Omega} 
        % &\lesssim  \| \eta_h - \Pi_0 \eta \|_\Omega + \| \eta_h - \Pi_0 \eta \|_{Q_1} \sup_{\eta_h' \in X_h} \frac{\| \eta_h' \|_{Q_1}}{\| \eta_h' \|_\Omega}
        \nonumber \\
        &\lesssim \| \eta_h - \Pi_0 \eta \|_\Omega 
        \le \| \eta_h - \eta \|_\Omega + \| (I - \Pi_0) \eta \|_\Omega
        \lesssim \| \eta_h - \eta \|_\Omega + h \| \eta \|_{1, \Omega}.
    \end{align}
    In the final steps, we used the continuity of $A$ and $A_h$ in $L^2(\Omega)$ and the approximation properties of $\P0$ in $H^1(\Omega)$. 
    To finish the proof, we use a triangle inequality with \eqref{eq: diff discrete sols}, \eqref{eq: bound diff A}, and the assumed linear convergence \eqref{eq: assumed conv MFE}.
    \begin{align}
        \| \hat \eta_h - \eta \|_{X_h} + \| \hat v_h - v \|_Y 
        &\le \| \hat \eta_h - \eta_h \|_{X_h} + \| \hat v_h - v_h \|_Y + \| \eta_h - \eta \|_{X_h} + \| v_h - v \|_Y \nonumber \\
        &\lesssim \| \eta_h - \eta \|_\Omega + \| v_h - v \|_Y + h \| \eta \|_{1, \Omega} \lesssim h.
    \end{align}
\end{proof}

For appropriate choices of finite element spaces, the matrix associated with $A_h$ becomes block-diagonal, and thereby easily invertible. This allows us to consider the equivalent, Schur-complement system: find $\hat v_h \in Y_h$ such that
\begin{subequations}
\label{eq: Schur-complement}
\begin{align} 
    \langle B A_h^{-1} B^* \hat v_h, v_h' \rangle
    &= 
    \langle f - B A_h^{-1} g, v_h' \rangle, &
    \forall v_h' &\in Y_h.
\end{align}
The stresses $\eta_h$ can then be post-processed by solving the block-diagonal system 
\begin{align}
    \langle A_h \hat \eta_h, \eta_h' \rangle &= \langle B \eta_h', \hat v_h \rangle + \langle g, \eta_h' \rangle, &
    \forall \eta_h' &\in X_h.
\end{align}
\end{subequations}
% We will refer to the solution $(\hat \eta_h, \hat v_h)$ as the multipoint stress mixed finite element solution. 
% In the next sections, we consider several choices of spaces and quadrature rules in more detail. 

In the following sections, we will present and analyze four $\ell$-robust schemes: 
a \emph{simple} scheme to which the theory of this section directly applies, 
a \emph{reducible} scheme based on \cite{ambartsumyan2020multipoint} that allows for elimination of the rotation variable, 
and two \emph{higher-order} schemes based on the quadrature rule from \cite{egger2020second}, with either continuous or discontinuous rotations.
For ease of reference, we summarize the methods under consideration and our theoretical convergence estimates in \Cref{tab: summary}. 

\begin{table}[ht]
    \caption{Summary of the multipoint stress mixed finite element schemes considered in this work and their convergence orders. The method names refer to the finite elements used to discretize the stress and rotation spaces. The notation 1+$1_{\ell = 0}$ denotes second order convergence for the elasticity system with $\ell = 0$.}
    \label{tab: summary}
    \centering

    \begin{tabular}{cc|cccc|cccc}
    \hline

    \hline
     Name & Section & $\Sigma_h$ & $W_h$ & $U_h$ & $R_h$ & $\hat \sigma_h$ & $\hat \omega_h$ & $\hat u_h$ & $\hat r_h$ \\
    \hline
        % \multirow{3}{*}{Elasticity}
        $\BDM1$-$\P0$ & \ref{sec:simple_scheme} &
        $\BDM1$ & $\BDM1$ & $\P0$ & $\P0$ & 1 & 1 & 1 & 1\\
        $\BDM1$-$\L1$ & \ref{sec:reducible_scheme} &
        $\BDM1$ & $\BDM1$ & $\P0$ & $\L1$ & 1 & 1 & 1 & 1\\
        $\RT1$-$\L1$ & \ref{sec:RT1-L1} &
        $\RT1$ & $\RT1$ & $\P1$ & $\L1$ & 1+$1_{\ell = 0}$ & 1 & 1 & 1+$1_{\ell = 0}$\\
        $\RT1$-$\P1$ & \ref{sec:RT1-P1} &
        $\RT1$ & $\RT1$ & $\P1$ & $\P1$ & 2 & 1 & 1 & 2 \\
        % % \multirow{2}{*}{Darcy}
        % $\RT0$-$\P0$ & $\times$ &
        % $\RT0$ & $\RT0$ & $\P0$ & $\P0$ & $\times$ & $\times$ & 1$^*$ \\
        % $\RT1$-$\P1$ & $\times$ &
        % $\RT1$ & $\RT1$ & $\P1$ & $\P1$ & $\times$ & $\times$ & 2 \\
    \hline
    \end{tabular}
\end{table}

\begin{remark}\label{rem:conservation BDM1P0}
In three of the cases, where the quadrature rule is used only in the stress bilinear forms, the multipoint stress methods do not introduce an additional error in the
momentum balance equations \eqref{eq: momentum eqs}. In particular, the discrete solutions $\eta_h$ and $\hat \eta_h$ satisfy $\langle B \eta_h, v_h' \rangle = \langle f, v_h' \rangle = \langle B \hat \eta_h, v_h' \rangle$ for all $v_h' \in V_h$. Moreover, since the displacements are discontinuous, linear momentum balance is enforced locally on each element. The same holds for the balance of angular momentum for the methods with discontinuous rotations. In the method of \Cref{sec:reducible_scheme}, the angular momentum is enforced through the vertex quadrature rule.
\end{remark}

\section{A simple scheme: \texorpdfstring{$\BDM1$-$\P0$}{BDM1-P0}}
\label{sec:simple_scheme}

The first method is based on one of the choices of finite element spaces considered in \cite{boon2025mixed}:
\begin{align} \label{eq: Spaces BDM1-P0}
    \Sigma_h &\coloneqq \BDM1^d \cap \Sigma, &
    W_h &\coloneqq \BDM1^{k_d} \cap W, &
    U_h &\coloneqq \P0^d, &
    R_h &\coloneqq \P0^{k_d}.
\end{align}
The intersections in the definitions of the first two spaces ensure that the essential boundary conditions are respected, cf.~\eqref{eq: Sobolev spaces}. We will refer to the methods of this section as ``$\BDM1$-$\P0$'' after the finite element spaces used to discretize $\Sigma$ and $R$. We note that the above choice of finite element spaces satisfies
\begin{equation}\label{eq:div-prop}
\div W_h \subseteq R_h,
\end{equation}
which will be utilized in the error analysis.

\subsection{The mixed finite element method}

\begin{remark}\label{ell-remark}
    Recognizing that the Cosserat equations are a Hodge-Laplace problem \cite[Sec.~5.2]{vcap2024bgg}, the well-posedness of \eqref{eq: system full-MFE} was proven in \cite[Thm.~4.6]{boon2025mixed} for the spaces in \eqref{eq: Spaces BDM1-P0}, under
    the assumption that $\ell$ is piecewise linear, using saddle point theory \cite[Sec. 4.2.3]{boffi2013mixed}. 
    % In particular, it is shown that $A$ is coercive on the kernel of $B$. 
    We avoid this assumption by taking a slightly modified approach, using $\|\cdot\|_{X_h}$ of \eqref{eq: discrete norm} instead of the full norm $\|\cdot\|_X$ of \eqref{norms}.
\end{remark}

\begin{theorem}[Stability] \label{thm: stability full BDM1-P0}
    With the finite element spaces from \eqref{eq: Spaces BDM1-P0}, Problem \eqref{eq: system full-MFE} admits a unique solution that satisfies
    \begin{align}
        \| \eta_h \|_{X_h} + \| v_h \|_Y \lesssim \| g \|_{X_h'} + \| f \|_{Y'}.
    \end{align}
\end{theorem}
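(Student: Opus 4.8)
**

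The plan is to verify that the specific finite element spaces in \eqref{eq: Spaces BDM1-P0} satisfy the hypotheses of the general stability result \Cref{thm: stability full MFE}, after which the conclusion follows immediately. Thus the proof reduces to checking two structural conditions: (i) the pair $\Sigma_h \times U_h$ satisfies $\div \Sigma_h \subseteq U_h$, and (ii) the triplet $\Sigma_h \times U_h \times R_h$ is elasticity-stable in the sense of \Cref{def:elasticity triplet}. Once both are established, \Cref{thm: stability full MFE} delivers existence, uniqueness, and the stated bound directly.

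For condition (i), I would invoke the inclusion recalled in the preliminaries, $\div \BDM1 \subseteq \P0$. Since $\Sigma_h \subseteq \BDM1^d$ and $U_h = \P0^d$, applying the divergence row-wise gives $\div \Sigma_h \subseteq \P0^d = U_h$, so this is essentially immediate.

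The main work, and the expected obstacle, is condition (ii): the elasticity inf-sup condition \eqref{eq:inf-sup} for $\BDM1^d \times \P0^d \times \P0^{k_d}$. This is precisely the statement that these spaces form a stable triplet for mixed elasticity with weakly imposed symmetry. I would not reprove this from scratch; instead I would cite the established stability of the $\BDM1$-based weakly symmetric elasticity element (this is exactly the PEERS-type / Arnold--Falk--Winther family referenced via \cite{arnold2007mixed} and used in \cite{boon2025mixed}). Concretely, one shows that for any $(u_h, r_h) \in \P0^d \times \P0^{k_d}$ there exists $\sigma_h \in \BDM1^d \cap \Sigma$ with $\div \sigma_h$ controlling $u_h$, $\Pi_{R_h}\asym \sigma_h$ controlling $r_h$, and $\| \sigma_h \|_X \lesssim \| (u_h, r_h) \|_Y$. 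The standard argument constructs $\sigma_h$ in two stages: first handle the displacement by choosing the $\BDM1$ degrees of freedom so that $\div \sigma_h = u_h$ (using surjectivity of $\div$ from $\BDM1^d$ onto $\P0^d$ with a norm bound), then correct the asymmetry using the extra interior $\BDM1$ bubbles to match $r_h$ without disturbing the divergence. The delicate point is ensuring these two stages are compatible and that the construction respects the essential boundary condition $n \cdot \sigma_h|_{\partial_e \Omega} = 0$; this is where one leans on the known elasticity-stability of the element rather than building it anew.

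I would therefore structure the proof as a short verification: state that $\div \Sigma_h \subseteq U_h$ holds by \eqref{eq:div-prop}-type inclusions, state that $\BDM1^d \times \P0^d \times \P0^{k_d}$ is elasticity-stable by the results of \cite{arnold2007mixed, boon2025mixed}, and then conclude by direct application of \Cref{thm: stability full MFE}. The only genuinely new ingredient relative to \cite{boon2025mixed} is that stability is now measured in the weakened norm $\| \cdot \|_{X_h}$ of \eqref{eq: discrete norm} rather than the full $\| \cdot \|_X$; but since \Cref{thm: stability full MFE} is already phrased in $\| \cdot \|_{X_h}$ and its proof handles the couple-stress divergence term through $\| \Pi_R \div \ell \omega \|_\Omega$, no separate treatment of the piecewise-linear-$\ell$ assumption is needed here — that difficulty has been absorbed into the general theorem.
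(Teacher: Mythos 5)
Your proposal is correct and follows exactly the paper's own argument: verify $\div \Sigma_h \subseteq U_h$ (immediate from $\div \BDM1 \subseteq \P0$), cite \cite{arnold2007mixed} for the elasticity-stability of the $\BDM1^d \times \P0^d \times \P0^{k_d}$ triplet, and conclude via \Cref{thm: stability full MFE}. The paper's proof is precisely this two-line verification, and your observation that the weakened-norm issue is already absorbed into the general theorem is also how the paper handles it.
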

\begin{proof}
    The inclusion $\div \Sigma_h \subseteq U_h$ holds and the triplet $\Sigma_h \times U_h \times R_h$ was shown to be elasticity-stable in \cite{arnold2007mixed}. Therefore \Cref{thm: stability full MFE} applies.
\end{proof}

We continue with the convergence analysis of the mixed finite element method. While we employ standard arguments, the use of the discrete norm \eqref{eq: discrete norm} requires us to take additional care.

\begin{theorem}[Convergence] \label{thm:conv full BDM1-P0}
    If the solution $(\eta, v)$ to \eqref{eq:cosserat_weak} is sufficiently regular and the finite element spaces are chosen as in \eqref{eq: Spaces BDM1-P0}, then the mixed finite element solution $(\eta_h, v_h)$ of \eqref{eq: system full-MFE} satisfies
    \begin{align}
        \| \eta_h - \eta \|_{X_h} + \| v_h - v \|_Y \lesssim h.
    \end{align}
\end{theorem}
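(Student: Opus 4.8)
**

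The plan is to establish convergence via the standard approach for mixed finite element methods: introduce suitable interpolation/projection operators into the discrete spaces, bound the interpolation errors using standard approximation theory, and then bound the discrete error (the difference between the finite element solution and the interpolant) using the stability result of Theorem~\ref{thm: stability full BDM1-P0}. The key subtlety, as the theorem statement foreshadows, is that convergence is measured in the discrete norm $\|\cdot\|_{X_h}$ from \eqref{eq: discrete norm}, which uses $\|\Pi_R \div \ell \omega\|_\Omega$ rather than the full $\|\div \ell \omega\|_\Omega$. This weakened norm is precisely what lets us avoid the piecewise-linearity assumption on $\ell$ from \cite{boon2025mixed}.

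First I would choose projection operators. For $\Sigma_h = \BDM1^d \cap \Sigma$ and $W_h = \BDM1^{k_d} \cap W$, the natural choice is the canonical $\BDM$ interpolant $\Pi_\sigma$, which commutes with the divergence in the sense that $\div \Pi_\sigma \sigma = \Pi_U \div \sigma$ where $\Pi_U$ is the $L^2$ projection onto $U_h = \P0^d$; this gives optimal $O(h)$ approximation in $L^2$ and controls $\|\div(\Pi_\sigma \sigma - \sigma)\|_\Omega$. For $U_h$ and $R_h$ I would use the $L^2$ projections $\Pi_U$ and $\Pi_R$. The couple-stress term requires care because of the weighted divergence $\div \ell \omega$; here the idea is to exploit that the error is measured only through $\|\Pi_R \div \ell(\Pi_\omega \omega - \omega_h)\|_\Omega$, so I would estimate $\Pi_R \div \ell \omega$ by inserting the commuting property of the $\BDM$ interpolant and using $\eqref{eq: bound ell}$ to handle the $\ell$-weight, together with the fact that $\Pi_R$ is an $L^2$-bounded projection.

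Next I would write the error equations: subtract the continuous weak form \eqref{eq:cosserat_weak} (tested against discrete functions) from the discrete problem \eqref{eq: system full-MFE}, and split the error into an interpolation part $\eta - \Pi\eta$ (bounded directly by approximation theory) and a discrete part $\Pi\eta - \eta_h \in X_h$. The discrete part satisfies a problem of the same saddle-point form with right-hand side given by the interpolation errors, so the stability bound from Theorem~\ref{thm: stability full BDM1-P0} yields
\begin{align*}
    \| \eta_h - \Pi\eta \|_{X_h} + \| v_h - \Pi v \|_Y
    \lesssim \| (\text{interpolation error terms}) \|_{X_h'}.
\end{align*}
Crucially, because the commuting-diagram property gives $\div(\Pi_\sigma \sigma - \sigma) = 0$ when paired against $U_h$ (since $\div \Sigma_h \subseteq U_h$ and $\Pi_U \div \sigma$ matches), several consistency terms vanish, and the property $\div W_h \subseteq R_h$ from \eqref{eq:div-prop} ensures the couple-stress divergence term is handled cleanly in the $\Pi_R$-weighted norm.

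The main obstacle I anticipate is controlling the couple-stress divergence contribution in the discrete norm without assuming $\ell$ piecewise linear. The term $\Pi_R \div \ell(\omega - \Pi_\omega \omega)$ does not reduce to the clean commuting identity because $\ell$ multiplies a non-polynomial weight, so I would need to commute $\Pi_R$ past the $\ell$-weighted divergence and absorb the resulting commutator using the $L^\infty$ bound on $\ell$ and $\nabla\ell$ from \eqref{eq: bound ell}, possibly writing $\div \ell \omega = \ell \div \omega + \nabla \ell \cdot \omega$ and estimating each piece. This is exactly where using $\|\Pi_R \div \ell\,\cdot\|_\Omega$ rather than the full $H_{\divo}$ norm pays off: the projection $\Pi_R$ allows me to test against $R_h$ and transfer derivatives, so that only $L^2$ approximation of $\omega$ (not of its weighted divergence) is required, yielding the desired $O(h)$ rate and completing the triangle inequality $\|\eta_h - \eta\|_{X_h} \le \|\eta_h - \Pi\eta\|_{X_h} + \|\Pi\eta - \eta\|_{X_h}$.
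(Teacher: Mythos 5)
Your proposal has the correct skeleton, and it matches the paper's: canonical $\BDM1$ interpolants $\pi_X$ with the commuting property, $L^2$ projections $\Pi_Y=(\Pi_U,\Pi_R)$, error equations cast as a discrete problem with interpolation-error data, the stability bound of \Cref{thm: stability full BDM1-P0}, and a final triangle inequality. Your treatment of the term you flag as the main obstacle, $\| \Pi_R \div \ell (I - \pi_W)\omega \|_\Omega$, is also fine: splitting $\div \ell \omega = \ell \div \omega + \nabla \ell \cdot \omega$ and invoking \eqref{eq: bound ell} gives $O(h)$ directly, and in fact this term does not even need \eqref{eq:div-prop}.

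The genuine gap is that you have misplaced where the difficulty actually sits. The hard term is not an interpolation error but the consistency term $\langle B \eta_h', (I - \Pi_Y) v \rangle$, specifically $(\ell \div \omega_h', (I - \Pi_R) r)_\Omega$, where $\omega_h' \in W_h$ is an \emph{arbitrary discrete test function}. The dual norm you must bound is taken against $\| \eta_h' \|_{X_h}$, and the norm \eqref{eq: discrete norm} controls only $\| \omega_h' \|_\Omega$ and $\| \Pi_R \div \ell \omega_h' \|_\Omega$ --- not $\| \div \omega_h' \|_\Omega$. Inserting $\Pi_R$ in front of $\div \ell \omega_h'$, as you suggest, is useless here, because this factor is paired against $(I - \Pi_R) r$, which annihilates the $\Pi_R$ part by orthogonality; what survives is exactly the uncontrolled piece. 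A plain Cauchy--Schwarz estimate gives $\| \div \omega_h' \|_\Omega \, \| (I-\Pi_R) r \|_\Omega \lesssim h^{-1} \| \omega_h' \|_\Omega \cdot h = O(1)$, which destroys the rate. The paper's mechanism (cf.~\eqref{eq: bound problematic term}) needs \emph{three} ingredients simultaneously: use \eqref{eq:div-prop} to subtract $\Pi_0 \ell$ (legitimate since $\Pi_0\ell \, \div \omega_h' \in R_h$ is orthogonal to $(I-\Pi_R)r$), which gains a factor $\| (I - \Pi_0)\ell \|_{L^\infty(\Omega)} \lesssim h$; then a discrete inverse inequality $\| \div \omega_h' \|_\Omega \lesssim h^{-1} \| \omega_h' \|_\Omega$; then the $O(h)$ projection error of $r$. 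The net count $h \cdot h^{-1} \cdot h = h$ is what rescues the estimate. Your proposal never invokes an inverse inequality, and the phrase ``handled cleanly in the $\Pi_R$-weighted norm'' glosses over precisely this step; without it the argument reverts to needing $\ell$ piecewise constant (or the piecewise-linear assumption of the reference the paper is explicitly trying to avoid).
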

\begin{proof}
    From \eqref{eq:cosserat} and \eqref{eq: system full-MFE}, we deduce
    \begin{align}
        \langle A \eta_h, \eta_h' \rangle
        - \langle B \eta_h', v_h \rangle
        + \langle B \eta_h, v_h' \rangle
        &= 
        \langle A \eta, \eta_h' \rangle
        - \langle B \eta_h', v \rangle
        + \langle B \eta, v_h' \rangle.
    \end{align}
    Let $\pi_X = (\pi_\Sigma, \pi_W)$ denote the canonical interpolant onto $X_h$, inherent to $\BDM1$, and let $\Pi_Y = (\Pi_U, \Pi_R)$ be the $L^2$ projection onto $Y_h$. Subtracting these from both sides gives us
    \begin{align} \label{eq: three terms}
        \langle A (\eta_h - \pi_X \eta), \eta_h' \rangle
        - \langle B \eta_h', v_h - \Pi_Y v \rangle
        + \langle B \eta_h - \pi_X \eta, v_h' \rangle
        &= 
        \langle A (I - \pi_X) \eta, \eta_h' \rangle
        - \langle B \eta_h', (I - \Pi_Y) v \rangle
        + \langle B (I - \pi_X) \eta, v_h' \rangle,
    \end{align}
    for all $(\eta_h', v_h') \in X_h \times Y_h$. This means that $(\eta_h - \pi_X \eta, v_h - \Pi_Y v)$ solves Problem \eqref{eq: system full-MFE} with right-hand sides $f = B (I - \pi_X) \eta$ and $g = A (I - \pi_X) \eta - B^* (I - \Pi_Y) v$. The stability estimate from \Cref{thm: stability full BDM1-L1} therefore applies and we continue by bounding the dual norms of the right-hand sides. The first term is bounded by the continuity of $A$ in $L^2$ and the approximation properties of $\pi_X$:
    \begin{align} \label{eq: bound term 1 conv}
        \langle A (I - \pi_X) \eta, \eta_h' \rangle 
        \lesssim \| (I - \pi_X) \eta \|_\Omega \| \eta_h' \|_\Omega
        \lesssim h \| \eta_h' \|_{X_h}.
    \end{align}
    
    For the second term, we substitute the definitions and use $\div \Sigma_h = U_h$ to derive
    \begin{align}
        \langle B \eta_h', (I - \Pi_Y) v \rangle
        &= -(\div \sigma_h', (I - \Pi_U) u)_\Omega
        + (\asym \sigma_h', (I - \Pi_R) r)_\Omega
        - (\div \ell \omega_h', (I - \Pi_R) r)_\Omega \nonumber \\
        &= (\asym \sigma_h', (I - \Pi_R) r)_\Omega
        - ((\nabla \ell) \cdot \omega_h', (I - \Pi_R) r)_\Omega
        - (\ell \div \omega_h', (I - \Pi_R) r)_\Omega.  \label{eq: bound term 2}
    \end{align}
    We continue in two steps. First, we use the Cauchy-Schwarz inequality, the bound on $\ell$ from \eqref{eq: bound ell}, and the approximation properties of $R_h$ to derive
    \begin{align}
        (\asym \sigma_h', (I - \Pi_R) r)_\Omega
        - ((\nabla \ell) \cdot \omega_h', (I - \Pi_R) r)_\Omega
        &\le (\| \sigma_h' \|_\Omega + \| (\nabla \ell) \cdot \omega_h' \|_\Omega ) \| (I - \Pi_R) r \|_\Omega \nonumber \\
        &\lesssim (\| \sigma_h' \|_\Omega + \| \omega_h' \|_\Omega ) h. \label{eq: bound term 2a}
\end{align}
For the final term of \eqref{eq: bound term 2}, using \eqref{eq:div-prop}, we introduce $\Pi_0$ as the $L^2$ projection onto $\P0$ and use a discrete inverse inequality to obtain
    \begin{align}
        (\ell \div \omega_h', (I - \Pi_R) r)_\Omega
        &= ((\ell - \Pi_0 \ell) \div \omega_h', (I - \Pi_R) r)_\Omega \nonumber \\
        &\lesssim \| (I - \Pi_0) \ell \|_{L^\infty(\Omega)} \| \div \omega_h' \|_\Omega \| (I - \Pi_R) r \|_\Omega \nonumber\\
        &\lesssim h h^{-1} \| \omega_h' \|_\Omega h
        = h \| \omega_h' \|_\Omega. \label{eq: bound problematic term}
    \end{align}
    Note that we require the inverse inequality because the $X_h$-norm only controls $\| \Pi_R \div \ell \omega_h' \|$, and not $\| \div \omega_h' \|$. 
    Together, \eqref{eq: bound term 2a} and \eqref{eq: bound problematic term} allow us to bound \eqref{eq: bound term 2} as
    \begin{align} \label{eq: bound term 2 conv}
        \langle B \eta_h', (I - \Pi_Y) v \rangle \lesssim h \| \eta_h' \|_{X_h}.
    \end{align}

    For the third term in \eqref{eq: three terms}, we use the commuting property $\Pi_U \div (I - \pi_\Sigma) = 0$, and the bounds on $\ell$:
    \begin{align} \label{eq: bound term 3 conv}
        \langle B (I - \pi_X) \eta, v_h' \rangle
        &= -(\div (I - \pi_\Sigma) \sigma, u_h')_\Omega
        + (\asym (I - \pi_\Sigma) \sigma, r_h')_\Omega
        - (\div \ell (I - \pi_W) \omega, r_h')_\Omega \nonumber \\
        &\le \| (I - \pi_\Sigma) \sigma \|_\Omega \| r_h' \|_\Omega
        + \| \div \ell (I - \pi_W) \omega \|_\Omega \| r_h' \|_\Omega \nonumber\\
        &\lesssim \left(
            \| (I - \pi_\Sigma) \sigma \|_\Omega
            + \| (I - \pi_W) \omega \|_\Omega) + \| \div (I - \pi_W) \omega \|_\Omega\right) 
            \| r_h' \|_\Omega %\nonumber\\
        % &
        \lesssim h \| v_h' \|_Y.
    \end{align}
    In the final step, we used the approximation properties of $\pi_X$. 
    We now collect the bounds on the three terms to deduce
    \begin{align}
        \| \eta_h - \pi_X \eta \|_{X_h} + \| v_h - \Pi_Y v \|_Y \lesssim 
        \sup_{\eta_h' \in X_h} \frac{\langle A (I - \pi_X) \eta, \eta_h' \rangle - \langle B \eta_h', (I - \Pi_Y) v \rangle}{\| \eta_h' \|_{X_h}}
        + \sup_{v_h' \in Y_h} \frac{\langle B (I - \pi_X) \eta, v_h' \rangle}{\| v_h' \|_{X_h}}
        \lesssim h.
    \end{align}

    It remains to bound the interpolation error $\| (I - \pi_X) \eta \|_{X_h} + \| (I - \Pi_Y) v \|_Y$ by $h$. This follows immediately from the approximation properties of $\pi_X$ and $\Pi_Y$ for the finite element spaces \eqref{eq: Spaces BDM1-P0}. The only exception is the term concerning $\| \Pi_R \div \ell \omega \|_\Omega$, which we bound as follows:
    \begin{align}
        \| \Pi_R \div \ell (I - \pi_W) \omega \|_\Omega
        \lesssim 
        \| (I - \pi_W) \omega \|_\Omega
        + \| \div (I - \pi_W) \omega \|_\Omega
        \lesssim h.
    \end{align}
    That concludes the proof.
\end{proof}

\subsection{The multipoint stress mixed finite element method}

To construct our multipoint stress mixed finite element method, we use the quadrature rule $Q_1$ from \eqref{eq:Q1} to define a discrete approximation of the operator $A$ (cf. \eqref{eq: operator A}):
\begin{align} \label{eq: A_h BDM1}
    \langle A_h \eta_h , \eta_h'  \rangle
    &\coloneqq (\mathcal{A}_\sigma \sigma_h, \sigma_h')_{Q_1} + (\mathcal{A}_\omega \omega_h, \omega_h')_{Q_1}.
\end{align}
The $\BDM1$-$\P0$ MS-MFE method now solves \eqref{eq: system MS-MFE}, which we repeat here for convenience: find $(\hat \eta_h, \hat v_h) \in X_h \times Y_h$ such that
\begin{align} \label{eq: system BDM P0}
    \langle A_h \hat \eta_h, \eta_h' \rangle
    - \langle B \eta_h', \hat v_h \rangle
    + \langle B \hat \eta_h, v_h' \rangle
    &= 
    \langle g, \eta_h' \rangle
    + \langle f, v_h' \rangle, &
    \forall (\eta_h', v_h') &\in X_h \times Y_h.
\end{align}

\begin{theorem}[Stability] \label{thm: stab MS BDM1-P0}
    The $\BDM1$-$\P0$ MS-MFE method is stable, i.e. Problem \eqref{eq: system BDM P0} admits a unique solution that satisfies the bound
    \begin{align}
        \| \hat \eta_h \|_{X_h} + \| \hat v_h \|_Y \lesssim \| g \|_{X_h'} + \| f \|_{Y'}.
    \end{align}
\end{theorem}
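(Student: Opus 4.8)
The plan is to obtain this result as a direct corollary of the general stability theorem \Cref{thm: stability MS-MFE}, so that the actual work reduces to checking that each of its three hypotheses is satisfied by the $\BDM1$-$\P0$ spaces \eqref{eq: Spaces BDM1-P0} together with the operator $A_h$ defined in \eqref{eq: A_h BDM1}. This is exactly the division of labor that \Cref{sec: MS-MFE} was set up to exploit.

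First I would recall that the structural assumptions of \Cref{thm: stability full MFE} — namely $\div \Sigma_h \subseteq U_h$ and elasticity-stability of the triplet $\Sigma_h \times U_h \times R_h$ — have already been verified in the proof of \Cref{thm: stability full BDM1-P0}, where they were drawn from \cite{arnold2007mixed}. Second, the operator $A_h$ in \eqref{eq: A_h BDM1} is precisely of the generic form \eqref{eq: A_h} with the discrete inner product $(\cdot, \cdot)_Q$ specialized to $(\cdot, \cdot)_{Q_1}$, so this hypothesis holds by construction.

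The only remaining condition is the norm equivalence \eqref{eq: norm equivalence Q}, which here reads $\| \eta_h \|_{Q_1} \eqsim \| \eta_h \|_\Omega$ on the discrete stress spaces. Since both $\Sigma_h = \BDM1^d \cap \Sigma$ and $W_h = \BDM1^{k_d} \cap W$ are contained in the piecewise linear fields $\P1$, every component entering the $Q_1$-inner product is piecewise linear, so this equivalence follows immediately from \Cref{lem: quadrature_1} applied component-wise. With all three hypotheses in place, \Cref{thm: stability MS-MFE} yields existence, uniqueness, and the stated bound $\| \hat \eta_h \|_{X_h} + \| \hat v_h \|_Y \lesssim \| g \|_{X_h'} + \| f \|_{Y'}$.

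I do not anticipate a genuine obstacle here: the entire purpose of the general framework was to isolate the method-specific verification to this single norm-equivalence check, which for the present scheme is immediate because $\BDM1 \subseteq \P1$ falls within the scope of \Cref{lem: quadrature_1}. The only point requiring any care is confirming the inclusions $\Sigma_h, W_h \subseteq \P1$ so that the $Q_1$ rule controls the relevant products with $\ell$-independent equivalence constants, thereby preserving the $\ell$-robustness already present in \Cref{thm: stability full BDM1-P0}.
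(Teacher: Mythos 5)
Your proposal is correct and follows essentially the same route as the paper: the paper's proof likewise invokes \Cref{thm: stability MS-MFE}, noting that the inclusion $\BDM1 \subseteq \P1^d$ together with \Cref{lem: quadrature_1} yields the norm equivalence \eqref{eq: norm equivalence Q}, while the structural hypotheses ($\div \Sigma_h \subseteq U_h$ and elasticity-stability) carry over from \Cref{thm: stability full BDM1-P0}. Your write-up simply makes these already-verified hypotheses explicit, which the paper leaves implicit.
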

\begin{proof}
    Due to the inclusion $\BDM1 \subseteq \P1^d$, \Cref{lem: quadrature_1} gives us the norm equivalence \eqref{eq: norm equivalence Q}. The result therefore follows by \Cref{thm: stability MS-MFE}.
\end{proof}

We remark that in the case of $\ell = 0$, the scheme reduces to the MSMFE-0 method of \cite[Sec.~3]{ambartsumyan2020multipoint}. The stability of that method confirms that the $\BDM1$-$\P0$ MS-MFE method is $\ell$-robust in the sense of \Cref{def: l-robust}.

\begin{remark}\label{rem:norm}
Under the assumption that $\ell$ is piecewise linear, the argument from \cite[Thm.~4.6]{boon2025mixed} can be used, which gives the full-norm stability bound
$$
\| \hat \eta_h \|_{X} + \| \hat v_h \|_Y \lesssim \| g \|_{X'} + \| f \|_{Y'}.
$$
\end{remark}

\begin{theorem}[Convergence] \label{thm:conv MS BDM1-P0}
    If the solution $(\eta, v)$ to \eqref{eq:cosserat_weak} is sufficiently regular, then the $\BDM1$-$\P0$ MS-MFE method \eqref{eq: system BDM P0} converges linearly, i.e.
    \begin{align}
        \| \hat \eta_h - \eta \|_{X_h} + \| \hat v_h - v \|_Y \lesssim h.
    \end{align}
\end{theorem}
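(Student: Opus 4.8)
The plan is to obtain this as a direct corollary of the general convergence result \Cref{thm:conv MS-MFE}, whose hypotheses I would verify one by one for the $\BDM1$-$\P0$ spaces \eqref{eq: Spaces BDM1-P0} together with the quadrature rule $Q_1$ used in \eqref{eq: A_h BDM1}. Thus the proof is organized as a verification of assumptions rather than a fresh argument.

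First I would check that the stability hypotheses of \Cref{thm: stability MS-MFE} hold. These were already established in the course of \Cref{thm: stab MS BDM1-P0}: the inclusion $\div \Sigma_h \subseteq U_h$ and the elasticity-stability of the triplet $\Sigma_h \times U_h \times R_h$ are in place, and the inclusion $\BDM1 \subseteq \P1^d$ combined with \Cref{lem: quadrature_1} supplies the norm equivalence \eqref{eq: norm equivalence Q} on $\Sigma_h$. Second, I would verify the quadrature-exactness condition \eqref{eq: exactness quadrature Q}, namely $(\eta_h, \phi_0)_{Q_1} = (\eta_h, \phi_0)_\Omega$ for every $\eta_h \in X_h$ and every piecewise-constant $\phi_0 \in \P0^{d \times d} \times \P0^{d \times k_d}$. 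Since each component of $\Sigma_h \subseteq \BDM1^d$ and of $W_h \subseteq \BDM1^{k_d}$ is piecewise linear, this follows componentwise from the exactness statement $(\phi_1, \varphi_0)_{Q_1} = (\phi_1, \varphi_0)_\Omega$ in \Cref{lem: quadrature_1}, using the componentwise extension of $Q_1$ to the tensor- and vector-valued spaces.

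Third, I would supply the linear-convergence hypothesis \eqref{eq: assumed conv MFE} for the underlying non-reduced mixed finite element method. This is exactly the content of \Cref{thm:conv full BDM1-P0}, which gives $\| \eta_h - \eta \|_{X_h} + \| v_h - v \|_Y \lesssim h$; since $\| \cdot \|_\Omega \le \| \cdot \|_{X_h}$ follows directly from the definition of the discrete norm \eqref{eq: discrete norm}, the weaker $L^2$-bound $\| \eta_h - \eta \|_\Omega + \| v_h - v \|_Y \lesssim h$ required by \eqref{eq: assumed conv MFE} holds a fortiori. With all three hypotheses confirmed, \Cref{thm:conv MS-MFE} applies verbatim and yields the claimed estimate.

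I do not anticipate a genuine obstacle, as the statement is essentially a specialization of the abstract convergence theorem to a concrete pair of spaces and quadrature rule. The only point requiring mild care is the exactness condition \eqref{eq: exactness quadrature Q}: it must be read as applying $Q_1$ componentwise to the matrix- and vector-valued stress spaces, and its validity hinges on the inclusion $\BDM1 \subseteq \P1$ so that the piecewise-linear exactness of \Cref{lem: quadrature_1} is indeed applicable. Once that inclusion is invoked, the remainder is bookkeeping over the already-proven stability and full-MFE convergence results.
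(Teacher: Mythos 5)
Your proposal is correct and follows exactly the paper's own proof: verify the quadrature exactness \eqref{eq: exactness quadrature Q} via \Cref{lem: quadrature_1} (using $\BDM1 \subseteq \P1^d$), supply the linear convergence hypothesis \eqref{eq: assumed conv MFE} from \Cref{thm:conv full BDM1-P0}, and invoke \Cref{thm:conv MS-MFE}. Your additional observations—that the stability hypotheses were already checked in \Cref{thm: stab MS BDM1-P0} and that the $X_h$-norm bound implies the required $L^2$-norm bound a fortiori—are correct details that the paper leaves implicit.
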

\begin{proof}
    The identity \eqref{eq: exactness quadrature Q} follows from \Cref{lem: quadrature_1} whereas the linear convergence \eqref{eq: assumed conv MFE} was shown in \Cref{thm:conv full BDM1-P0}. This allows us to invoke \Cref{thm:conv MS-MFE}.
\end{proof}

\section{A reducible scheme: \texorpdfstring{$\BDM1$-$\L1$}{BDM1-L1}}
\label{sec:reducible_scheme}

As noted in \Cref{sec: MS-MFE}, a key feature of the multipoint stress mixed finite element methods is that they can be reduced to systems involving only displacement and rotation degrees of freedom, by taking a Schur complement. In this section, we show how this can be improved by basing the construction of the MSMFE-1 method \cite[Sec.~4]{ambartsumyan2020multipoint}.
We choose the spaces as in \eqref{eq: Spaces BDM1-P0}, with the space for the rotation variable replaced by the nodal Lagrange finite element space, i.e.
\begin{align} \label{eq: Spaces BDM1-L1}
    \Sigma_h &\coloneqq \BDM1^d \cap \Sigma, &
    W_h &\coloneqq \BDM1^{k_d} \cap W, &
    U_h &\coloneqq \P0^d, &
    R_h &\coloneqq \L1^{k_d}.
\end{align}

\subsection{The mixed finite element method}

We emphasize that the above choice of spaces is not considered in \cite{boon2025mixed}, so the resulting mixed finite element method is a new method for the linear Cosserat equations. By choosing the smaller space $R_h$, property \eqref{eq:div-prop} does not hold and the results from \Cref{sec:simple_scheme} do not apply directly. Nevertheless, the stability and convergence analysis of the mixed finite element method \eqref{eq: system full-MFE} and the multipoint stress variant follow along the same lines, so we only highlight the differences when necessary.

\begin{theorem}[Stability] \label{thm: stability full BDM1-L1}
    With the finite element spaces from \eqref{eq: Spaces BDM1-L1}, Problem \eqref{eq: system full-MFE} admits a unique solution that satisfies
    \begin{align}
        \| \eta_h \|_{X_h} + \| v_h \|_Y \lesssim \| g \|_{X_h'} + \| f \|_{Y'}.
    \end{align}
\end{theorem}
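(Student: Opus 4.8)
The plan is to reduce the statement to \Cref{thm: stability full MFE}, exactly as was done for the $\BDM1$-$\P0$ scheme in \Cref{thm: stability full BDM1-P0}. That general theorem has only two hypotheses, so I would verify each in turn for the spaces \eqref{eq: Spaces BDM1-L1}. The inclusion $\div \Sigma_h \subseteq U_h$ is immediate and unchanged from the previous section: since $\div \BDM1 = \P0$, one has $\div \Sigma_h = \P0^d = U_h$, with equality. Replacing $R_h = \P0^{k_d}$ by $R_h = \L1^{k_d}$ does not touch this condition, as it concerns only the pair $\Sigma_h \times U_h$, which is identical to that of \Cref{sec:simple_scheme}.

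The substantive step is to check that the triplet $\Sigma_h \times U_h \times R_h = \BDM1^d \times \P0^d \times \L1^{k_d}$ is elasticity-stable in the sense of \Cref{def:elasticity triplet}, i.e. that \eqref{eq:inf-sup} holds with a constant independent of $h$ and $\ell$; the $\ell$-independence is automatic since \eqref{eq:inf-sup} does not involve $\ell$, which is what yields $\ell$-robustness. This is where the main obstacle lies. Unlike the $\BDM1$-$\P0$ triplet, whose elasticity-stability is the Arnold--Falk--Winther result \cite{arnold2007mixed}, here the rotation space $\L1^{k_d}$ is strictly larger than $\P0^{k_d}$ (it contains the constants but adds the continuous linears), so enlarging the space tightens the inf-sup and the AFW estimate does not transfer directly. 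However, this is precisely the inf-sup condition underlying the elasticity discretization of the MSMFE-1 method, which was established in \cite{ambartsumyan2020multipoint}, so I would invoke that result and then conclude by \Cref{thm: stability full MFE}.

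If a self-contained argument were preferred, I would proceed in the standard two-step fashion. Given $(u_h, r_h) \in U_h \times R_h$, surjectivity of $\div : \Sigma_h \to U_h$ with bounded right inverse produces $\sigma_h^{(1)}$ with $\div \sigma_h^{(1)} = u_h$ and $\| \sigma_h^{(1)} \|_\Omega + \| \div \sigma_h^{(1)} \|_\Omega \lesssim \| u_h \|_\Omega$, controlling the displacement term. It then remains to dominate $(\asym \sigma_h, r_h)_\Omega$ by constructing a divergence-free correction $\sigma_h^{(2)} \in \Sigma_h$ whose asymmetry represents the continuous piecewise linear $r_h$, with $\| \sigma_h^{(2)} \|_\Omega \lesssim \| r_h \|_\Omega$. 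Exhibiting such a correction is the heart of the matter and is exactly the point that exploits the richness of $\BDM1$ relative to continuous $\P1$ rotations; I expect this to be the hard part, whereas combining the two contributions and feeding the result into \Cref{thm: stability full MFE} is routine.
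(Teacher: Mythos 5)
Your proposal follows the paper's proof exactly: verify $\div \Sigma_h \subseteq U_h$ (unchanged from the $\BDM1$-$\P0$ case), verify elasticity-stability of the triplet by citation, and invoke \Cref{thm: stability full MFE}. The paper cites \cite{cockburn2010new} for the elasticity-stability of $\BDM1^d \times \P0^d \times \L1^{k_d}$ whereas you cite \cite{ambartsumyan2020multipoint}; this is immaterial, since the MSMFE-1 method there is built on precisely this element. One factual slip worth correcting: $\L1^{k_d}$ is not ``strictly larger'' than $\P0^{k_d}$ --- piecewise constants are discontinuous, so neither space contains the other --- but your conclusion that the Arnold--Falk--Winther stability of the $\BDM1$-$\P0$ triplet does not transfer directly remains correct, precisely because the two rotation spaces are not nested.
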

\begin{proof}
    As in \Cref{thm: stability full BDM1-P0}, we have $\div \Sigma_h \subseteq U_h$. Moreover, the triplet $\Sigma \times U_h \times R_h$ in \eqref{eq: Spaces BDM1-L1} was shown to be elasticity-stable in \cite{cockburn2010new}, so \Cref{thm: stability full MFE} applies.
\end{proof}

\begin{theorem}[Convergence] \label{thm:convergence BDM1-L1}
    If the solution $(\eta, v)$ to \eqref{eq:cosserat_weak} is sufficiently regular and the finite element spaces are chosen as in \eqref{eq: Spaces BDM1-L1}, then the mixed finite element solution $(\eta_h, v_h)$ of \eqref{eq: system full-MFE} satisfies
    \begin{align}
        \| \eta_h - \eta \|_{X_h} + \| v_h - v \|_Y \lesssim h.
    \end{align}
\end{theorem}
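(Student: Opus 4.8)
The plan is to follow the convergence proof of \Cref{thm:conv full BDM1-P0} almost line by line, isolating the single place where the loss of property \eqref{eq:div-prop} forces a new argument. I would first form the error equation by subtracting \eqref{eq: system full-MFE} from the exact relations \eqref{eq:cosserat_weak}, then insert the canonical $\BDM1$-interpolant $\pi_X = (\pi_\Sigma, \pi_W)$ together with the $L^2$ projection $\Pi_Y = (\Pi_U, \Pi_R)$ onto $Y_h$, reproducing the analogue of \eqref{eq: three terms}. This identifies $(\eta_h - \pi_X \eta, v_h - \Pi_Y v)$ as the solution of Problem \eqref{eq: system full-MFE} with data $f = B(I - \pi_X)\eta$ and $g = A(I - \pi_X)\eta - B^*(I - \Pi_Y)v$, so the stability bound of \Cref{thm: stability full BDM1-L1} reduces everything to estimating the three duality pairings on the right-hand side.

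Two of the three terms carry over verbatim. The term $\langle A(I - \pi_X)\eta, \eta_h'\rangle$ is controlled by the $L^2$-continuity of $A$ and the $O(h)$ accuracy of $\pi_X$, exactly as in \eqref{eq: bound term 1 conv}. The term $\langle B(I - \pi_X)\eta, v_h'\rangle$ is treated as in \eqref{eq: bound term 3 conv}: the displacement contribution vanishes by the commuting property $\Pi_U \div(I - \pi_\Sigma) = 0$, which is unaffected since $U_h = \P0^d$ is unchanged, while the remaining rotation contributions are controlled by $\|(I-\pi_\Sigma)\sigma\|_\Omega + \|(I-\pi_W)\omega\|_\Omega + \|\div(I-\pi_W)\omega\|_\Omega \lesssim h$ together with \eqref{eq: bound ell}; none of this sees the choice of $R_h$.

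The difference is confined to the second term $\langle B\eta_h', (I-\Pi_Y)v\rangle$, and within it to the summand $(\ell\,\div\omega_h', (I-\Pi_R)r)_\Omega$. In the $\BDM1$-$\P0$ case this was handled in \eqref{eq: bound problematic term} by subtracting the piecewise constant $\Pi_0\ell$ and annihilating the result against $R_h = \P0^{k_d}$, which used precisely \eqref{eq:div-prop}; here $R_h = \L1^{k_d}$ does not contain $\div W_h$, so that cancellation is gone. Instead I would exploit that the $L^2$ projection onto the continuous piecewise linear space $\L1$ is second-order accurate, $\|(I-\Pi_R)r\|_\Omega \lesssim h^2\|r\|_{2,\Omega}$, which follows by comparison with the nodal interpolant (the embedding $H^2 \hookrightarrow C^0$ for $d \le 3$ guarantees the latter is well defined under the regularity assumption). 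Combining Cauchy--Schwarz, the bound $\|\ell\|_{L^\infty(\Omega)} \lesssim 1$ from \eqref{eq: bound ell}, the inverse inequality $\|\div\omega_h'\|_\Omega \lesssim h^{-1}\|\omega_h'\|_\Omega$, and this second-order estimate gives
\begin{align*}
    (\ell\,\div\omega_h', (I-\Pi_R)r)_\Omega
    &\lesssim \|\div\omega_h'\|_\Omega\, h^2\|r\|_{2,\Omega} \\
    &\lesssim h^{-1}\|\omega_h'\|_\Omega\, h^2
    = h\|\omega_h'\|_\Omega
    \lesssim h\|\eta_h'\|_{X_h}.
\end{align*}
The remaining summands $(\asym\sigma_h', (I-\Pi_R)r)_\Omega$ and $((\nabla\ell)\cdot\omega_h', (I-\Pi_R)r)_\Omega$ are bounded as in \eqref{eq: bound term 2a}, and $(\div\sigma_h', (I-\Pi_U)u)_\Omega$ still vanishes because $\div\Sigma_h = U_h = \P0^d$. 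Collecting the three bounds yields $\|\eta_h - \pi_X\eta\|_{X_h} + \|v_h - \Pi_Y v\|_Y \lesssim h$, and a triangle inequality with the interpolation errors $\|(I-\pi_X)\eta\|_{X_h} + \|(I-\Pi_Y)v\|_Y \lesssim h$ --- where the only nonstandard piece $\|\Pi_R\div\ell(I-\pi_W)\omega\|_\Omega$ is handled as before using that $\Pi_R$ is an $L^2$-contraction --- completes the proof. The main obstacle is exactly the failure of \eqref{eq:div-prop}; the key realization is that the extra factor of $h$ it previously supplied through $\|(I-\Pi_0)\ell\|_{L^\infty} \lesssim h$ is instead recovered from the improved $L^2$-approximation of the richer rotation space $\L1$.
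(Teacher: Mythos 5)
Your proposal is correct and matches the paper's own proof essentially line by line: the paper likewise reuses the $\BDM1$-$\P0$ argument with the stability bound of \Cref{thm: stability full BDM1-L1}, and replaces the bound \eqref{eq: bound problematic term} (which fails without \eqref{eq:div-prop}) by exactly your estimate --- Cauchy--Schwarz, the quadratic $L^2$-approximation property of $R_h = \L1^{k_d}$, and the inverse inequality $\|\div \omega_h'\|_\Omega \lesssim h^{-1}\|\omega_h'\|_\Omega$ --- recovering the same $h\|\omega_h'\|_\Omega$ bound, cf.~\eqref{eq: bound problematic term L1}. Your closing observation about trading $\|(I-\Pi_0)\ell\|_{L^\infty} \lesssim h$ for the richer rotation space's approximation power is precisely the mechanism the paper exploits.
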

\begin{proof}
  We follow the proof of \Cref{thm:conv full BDM1-P0}, using the stability from \Cref{thm: stability full BDM1-L1} instead of \Cref{thm: stability full BDM1-P0} until we reach inequality \eqref{eq: bound problematic term}. That bound does not hold in this case because property \eqref{eq:div-prop} does not hold and we therefore cannot subtract $\Pi_0 \ell$. Instead, we use the quadratic approximation property of $R_h = \L1^{k_d}$ and a discrete inverse inequality to derive
    \begin{align} \label{eq: bound problematic term L1}
        (\ell \div \omega_h', (I - \Pi_R) r)_\Omega
        &\le \| \ell \div \omega_h' \|_\Omega \| (I - \Pi_R) r \|_\Omega \nonumber \\
        &\lesssim \| \div \omega_h' \|_\Omega h^2 
        \lesssim h^{-1} \| \omega_h' \|_\Omega h^2
        = \| \omega_h' \|_\Omega h.
    \end{align}
    Since this is the same bound as in \eqref{eq: bound problematic term}, we may substitute it to obtain the analogous
    \begin{align} \label{eq: bound term 2 conv L1}
        \langle B \eta_h', (I - \Pi_Y) v \rangle \lesssim h \| \eta_h' \|_{X_h}.
    \end{align}
    The remainder of the proof is the same as in \Cref{thm:conv full BDM1-P0}, using the linear approximation properties of the interpolants $\pi_X$ and $\Pi_Y$ for the finite element spaces \eqref{eq: Spaces BDM1-P0}.
\end{proof}

\subsection{The multipoint stress mixed finite element method}

In the multipoint stress method, we aim to evaluate the balance of angular momentum using the quadrature rule from \Cref{lem: quadrature_1}. Recall from  \eqref{eq: momentum eqs} that this conservation law is given by
\begin{align} \label{eq: angular momentum rep}
    (\asym \sigma - \div \ell \omega, r')_\Omega 
    &= (f_\omega, r')_\Omega, &
    \forall r \in R.
\end{align}
We enforce this in the following weak sense:
\begin{align}
    (\asym \hat \sigma_h - \div \ell \hat \omega_h, r_h')_{Q_1} &= (\Pi_R f_\omega, r_h')_{Q_1}, &
    \forall r_h' &\in R_h,
\end{align}
in which $\Pi_R$ is the $L^2$ projection onto $R_h$, to ensure that the right-hand side is well-defined.
To incorporate this relation, we augment the operator $B$ and right-hand side $f$:
\begin{subequations} \label{eqs:B_h and f_h}
\begin{align}
    \langle B_h \eta_h, v_h' \rangle
    &\coloneqq -(\div \sigma_h, u_h')_\Omega
    + (\asym \sigma_h, r_h')_{Q_1}
    - (\div \ell \omega_h, r_h')_{Q_1}, &
    \forall (\eta_h, v_h') &\in X_h \times Y_h, \\
    \langle f_h, v_h' \rangle  &\coloneqq (f_\sigma, u_h')_\Omega + (\Pi_R f_\omega, r_h')_{Q_1}, &
    \forall v_h' &\in Y_h.
\end{align}
\end{subequations}

Using the operator $A_h$ from \eqref{eq: A_h BDM1}, the problem that defines the $\BDM1$-$\L1$ MS-MFE method is as follows: find $(\hat \eta_h, \hat v_h) \in X_h \times Y_h$ such that
\begin{align} \label{eq: system BDM L1}
    \langle A_h \hat \eta_h, \eta_h' \rangle
    - \langle B_h \eta_h', \hat v_h \rangle
    + \langle B_h \hat \eta_h, v_h' \rangle
    &= 
    \langle g, \eta_h' \rangle
    + \langle f_h, v_h' \rangle, &
    \forall (\eta_h', v_h') &\in X_h \times Y_h.
\end{align}

The continuity of $B_h$ in the $X_h$-norm of \eqref{eq: discrete norm} is not immediate. It therefore seems natural to analyze the problem in the following norm
\begin{align} \label{eq: discrete norm Q}
    \| \eta \|_{X_Q}^2 \coloneqq 
    \| \sigma \|_\Omega^2
    + \| \div \sigma \|_\Omega^2
    + \| \omega \|_\Omega^2
    + \| \Pi_R^Q \div \ell \omega \|_\Omega^2,
\end{align}
where $\Pi_R^Q$ is the orthogonal projection onto $R_h$ with respect to the $Q_1$-inner product. I.e.~for given $r \in R$, $\Pi_R^Q r \in R_h$ satisfies
\begin{align}
    (\Pi_R^Q r, r_h')_{Q_1} &= (\Pi_1 r, r_h')_{Q_1}, &
    \forall r_h' &\in R_h.
\end{align}
However, the norm \eqref{eq: discrete norm Q} is equivalent to \eqref{eq: discrete norm}, under an additional, but mild, regularity assumption on $\ell$.

\begin{lemma}[Norm equivalence] \label{lem:discrete norm equivalence}
    If $\ell|_{\Delta} \in W^{2, \infty}(\Delta)$ for each element $\Delta \in \Omega_h$, then 
    the norms from \eqref{eq: discrete norm} and \eqref{eq: discrete norm Q} are equivalent, i.e.
    \begin{align}
        \| \eta_h \|_{X_h} &\eqsim \| \eta_h \|_{X_Q}, &
        \forall \eta_h &\in X_h.
    \end{align}
\end{lemma}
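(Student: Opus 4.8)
Since the first three terms of $\|\cdot\|_{X_h}$ in \eqref{eq: discrete norm} and of $\|\cdot\|_{X_Q}$ in \eqref{eq: discrete norm Q} coincide, and since $\|\omega_h\|_\Omega$ is controlled by both norms, the plan is to prove the one-sided bound
\[
\| \Pi_R \div \ell \omega_h \|_\Omega \lesssim \| \Pi_R^Q \div \ell \omega_h \|_\Omega + \| \omega_h \|_\Omega
\]
together with its converse; these two estimates immediately yield the asserted equivalence. Write $r \coloneqq \div \ell \omega_h$ and $p \coloneqq \Pi_1 r \in \P1^{k_d}$, where $\Pi_1$ is the $L^2$-projection onto $\P1^{k_d}$. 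Because $R_h = \L1^{k_d} \subseteq \P1^{k_d}$, we have $\Pi_R r = \Pi_R p$ and $\Pi_R^Q r = \Pi_R^Q p$. I would then introduce the auxiliary function $\tilde b \in R_h$, well defined because $(\cdot,\cdot)_{Q_1}$ is an inner product on $R_h$ by \Cref{lem: quadrature_1}, through $(\tilde b, \chi)_{Q_1} = (p, \chi)_\Omega$ for all $\chi \in R_h$; this is a $Q_1$-projection carrying the \emph{exact} $L^2$ right-hand side, and it serves as the bridge between $\Pi_R p$ and $\Pi_R^Q p$.

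The first step is to show $\|\tilde b\|_\Omega \eqsim \|\Pi_R p\|_\Omega = \|\Pi_R \div \ell \omega_h\|_\Omega$ by a short duality argument. Using $\|\cdot\|_{Q_1} \eqsim \|\cdot\|_\Omega$ on $R_h \subseteq \P1^{k_d}$ and the two defining relations, one has $\|\tilde b\|_\Omega^2 \eqsim (\tilde b, \tilde b)_{Q_1} = (p, \tilde b)_\Omega = (\Pi_R p, \tilde b)_\Omega \le \|\Pi_R p\|_\Omega \|\tilde b\|_\Omega$, and conversely $\|\Pi_R p\|_\Omega^2 = (p, \Pi_R p)_\Omega = (\tilde b, \Pi_R p)_{Q_1} \le \|\tilde b\|_{Q_1}\|\Pi_R p\|_{Q_1} \eqsim \|\tilde b\|_\Omega\|\Pi_R p\|_\Omega$. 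The conceptual point is that $\tilde b$ and $\Pi_R p$ share the same $L^2$ right-hand side while their mass matrices (the $Q_1$-lumped one versus the consistent one) are spectrally equivalent, so their norms must be comparable.

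The second step is to show $\|\Pi_R^Q p - \tilde b\|_\Omega \lesssim \|\omega_h\|_\Omega$. Since $\Pi_R^Q p$ and $\tilde b$ both lie in $R_h$ and satisfy $(\Pi_R^Q p, \chi)_{Q_1} = (p,\chi)_{Q_1}$ and $(\tilde b, \chi)_{Q_1} = (p,\chi)_\Omega$, their difference is governed by the quadrature error $E(v,w) \coloneqq (v,w)_{Q_1} - (v,w)_\Omega$: testing with $\chi = \Pi_R^Q p - \tilde b$ and using the norm equivalence on $R_h$ gives $\|\Pi_R^Q p - \tilde b\|_\Omega^2 \lesssim E(p, \Pi_R^Q p - \tilde b)$. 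Since the vertex rule is exact on $\P1$, element by element $E$ only sees the quadratic part of a product of two linears, yielding $|E(v,w)| \lesssim h^2 \|\nabla v\|_\Omega \|\nabla w\|_\Omega$ for broken piecewise linears $v,w$ (gradients taken element by element). Combined with the inverse inequality $\|\nabla \chi\|_\Omega \lesssim h^{-1}\|\chi\|_\Omega$ on $R_h$, this reduces the claim to the single gradient bound
\[
\| \nabla \Pi_1 \div \ell \omega_h \|_\Omega \lesssim h^{-1} \| \omega_h \|_\Omega .
\]

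This last bound is the crux, and it is precisely where the assumption $\ell|_\Delta \in W^{2,\infty}(\Delta)$ enters. Splitting $\div \ell \omega_h = \ell \div \omega_h + (\nabla \ell)\cdot \omega_h$, the term $(\nabla \ell)\cdot \omega_h$ is harmless: it is $L^2$-bounded by $\|\omega_h\|_\Omega$ via \eqref{eq: bound ell}, and an inverse inequality on its $\Pi_1$-image costs only $h^{-1}$. The dangerous term is $\ell \div \omega_h$, since $\div \omega_h \in \P0^{k_d}$ is controlled only by $h^{-1}\|\omega_h\|_\Omega$. I would replace $\ell$ by its element-wise linear Taylor polynomial $\ell_1$, for which $\|\ell - \ell_1\|_{L^\infty(\Delta)} \lesssim h^2 |\ell|_{W^{2,\infty}(\Delta)}$ and $\|\nabla \ell_1\|_{L^\infty} \lesssim \|\nabla \ell\|_{L^\infty} \lesssim 1$. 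Then $\ell_1 \div \omega_h \in \P1^{k_d}$ is unchanged by $\Pi_1$, and since $\div\omega_h$ is constant on each element the product rule gives $\nabla(\ell_1 \div \omega_h) = (\div \omega_h)\,\nabla \ell_1$, whose $L^2$-norm is $\lesssim \|\nabla\ell_1\|_{L^\infty}\|\div\omega_h\|_\Omega \lesssim h^{-1}\|\omega_h\|_\Omega$; the essential gain is that $\nabla\ell_1$ is \emph{bounded} rather than of order $h^{-1}$. The remainder $(\ell - \ell_1)\div\omega_h$ has $L^2$-norm $\lesssim h^2\|\div\omega_h\|_\Omega \lesssim h\|\omega_h\|_\Omega$, so its $\Pi_1$-image contributes at most $h^{-1}\cdot h\|\omega_h\|_\Omega = \|\omega_h\|_\Omega$ after the inverse inequality. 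Assembling the contributions establishes the displayed gradient bound, hence $\|\Pi_R^Q p - \tilde b\|_\Omega \lesssim \|\omega_h\|_\Omega$; combined with the first step and the triangle inequality this gives the equivalence of the final terms, and therefore of the two norms. The main obstacle, as isolated above, is controlling $\|\nabla \Pi_1 \div \ell \omega_h\|_\Omega$ despite $\div\omega_h$ being large — resolved only by the $W^{2,\infty}$ regularity of $\ell$, which is exactly the regularity the lemma hypothesizes.
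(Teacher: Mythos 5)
Your proposal is correct, and it reaches the lemma by a genuinely different route than the paper. The paper's proof is much shorter: it starts from the triangle inequality $\| \Pi_R^Q \div \ell \omega_h \|_\Omega \le \| \Pi_R \div \ell \omega_h \|_\Omega + \| (\Pi_R - \Pi_R^Q) \div \ell \omega_h \|_\Omega$, uses the projection identity $(\Pi_R - \Pi_R^Q)\phi = (\Pi_R - \Pi_R^Q)(I - \Pi_R)\phi$ to reduce the second term to the approximation error $\|(I - \Pi_R)\div \ell \omega_h\|_\Omega$, bounds that by $\sum_\Delta h \|\nabla(\div \ell \omega_h)\|_\Delta$ via a Poincar\'e inequality, and concludes with the product rule (which produces the Hessian $\nabla^2 \ell$, where the $W^{2,\infty}$ hypothesis enters) and an inverse inequality. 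You instead interpose the lumped-mass function $\tilde b$, prove $\|\tilde b\|_\Omega \eqsim \|\Pi_R p\|_\Omega$ by spectral equivalence of the consistent and $Q_1$-lumped mass matrices, and control $\|\Pi_R^Q p - \tilde b\|_\Omega$ through the quadrature-error functional $E$ together with the gradient bound $\|\nabla \Pi_1 \div \ell \omega_h\|_\Omega \lesssim h^{-1}\|\omega_h\|_\Omega$, with $W^{2,\infty}$ entering through the elementwise Taylor polynomial $\ell_1$ rather than through $\nabla^2 \ell$ directly. Your argument is longer, but it buys two things. First, every one of your estimates is element-local and involves only polynomials; you never need an approximation estimate for the projection $\Pi_R$ onto the \emph{continuous} space $\L1^{k_d}$ applied to a discontinuous argument. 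That is precisely the delicate point in the paper's chain: $\div \ell \omega_h$ jumps across facets (already through $\ell \div \omega_h$ with $\div \omega_h \in \P0$ discontinuous), and elementwise Poincar\'e bounds do not by themselves control $\|(I - \Pi_R)\phi\|_\Omega$ for functions with inter-element jumps, so that step of the paper requires extra care. Second, you never claim the stronger difference bound $\|(\Pi_R - \Pi_R^Q)\div \ell \omega_h\|_\Omega \lesssim \|\omega_h\|_\Omega$; you establish only the two-sided comparison up to an additive $\|\omega_h\|_\Omega$, which is exactly what the norm equivalence requires and no more. The core ingredients --- the $Q_1$--$L^2$ norm equivalence of \Cref{lem: quadrature_1}, inverse inequalities, and the $W^{2,\infty}$ regularity of $\ell$ --- are the same in both proofs; the difference is architectural, and your architecture is the more robust of the two.
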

\begin{proof}
    The only difference between the norms concerns the component $\omega_h$. We start with a triangle inequality
    \begin{align} \label{eq: triangle}
        \| \Pi_R^Q \div \ell \omega_h \|_\Omega &\le 
        \| \Pi_R \div \ell \omega_h \|_\Omega + \| (\Pi_R - \Pi_R^Q) \div \ell \omega_h \|_\Omega,
        % \forall \phi &\in \P1^{k_d}.
    \end{align}
    and focus on the second term. Since $\Pi_R^Q$ is a projection, we derive for $\phi \in R$:
    \begin{align}
        \| (\Pi_R - \Pi_R^Q) \phi \|_\Omega
        &= \| (\Pi_R - \Pi_R^Q) (I - \Pi_R) \phi \|_\Omega
        \lesssim \| (I - \Pi_R) \phi \|_\Omega.
    \end{align}

    We now continue by invoking the Poincar\'e inequality and using the fact that $\omega_h$ is piecewise linear:
    \begin{align}
        \| (\Pi_R - \Pi_R^Q) \div \ell \omega_h \|_\Omega
        \lesssim \| (I - \Pi_R) \div \ell \omega_h \|_\Omega 
        &\lesssim \sum_{\Delta \in \Omega_h} h \| \nabla (\div \ell \omega_h) \|_\Delta \nonumber \\
        &= \sum_{\Delta \in \Omega_h} h \| (\nabla^2 \ell) \omega_h + (\nabla \omega_h + (\div \omega_h) I) \nabla \ell \|_\Delta \nonumber \\
        &\lesssim \sum_{\Delta \in \Omega_h} h \| \omega_h \|_\Delta + h \| \nabla \omega_h \|_\Delta 
        \lesssim \| \omega_h \|_\Omega.
    \end{align}
    Here $\nabla^2 \ell$ is the Hessian of $\ell$, which is bounded on each element by assumption. 
    The combination of these bounds yields $\| \eta_h \|_{X_Q} \lesssim \| \eta_h \|_{X_h}$. The converse inequality follows analogously by interchanging terms in \eqref{eq: triangle}.
\end{proof}

Due to \Cref{lem:discrete norm equivalence}, we may continue the analysis of \eqref{eq: system BDM L1} in the $X_h$-norm.

\begin{theorem}[Stability] \label{thm:stability MSMFE BDML}
    If $\ell$ satisfies the assumption of \Cref{lem:discrete norm equivalence}, then the $\BDM1$-$\L1$ MS-MFE method is stable, i.e. Problem \eqref{eq: system BDM L1} admits a unique solution that satisfies
    \begin{align}
        \| \hat \eta_h \|_{X_h} + \| \hat v_h \|_Y \lesssim \| g \|_{X_h'} + \| f_h \|_{Y'}.
    \end{align}
\end{theorem}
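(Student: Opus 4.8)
The plan is to verify the Brezzi conditions \cite[Sec.~4.2.3]{boffi2013mixed} for the saddle-point problem \eqref{eq: system BDM L1}, governed by the forms $A_h$ and $B_h$. Since the continuity of $B_h$ in the $X_h$-norm is not transparent, I would carry out the entire verification in the norm $\| \cdot \|_{X_Q}$ of \eqref{eq: discrete norm Q}, whose defining projection $\Pi_R^Q$ is adapted to the $Q_1$-inner product appearing in $B_h$. \Cref{lem:discrete norm equivalence} then transfers the resulting bound back to the $X_h$-norm claimed in the statement, and dually $\| g \|_{X_Q'} \eqsim \| g \|_{X_h'}$, so the stated data norm is recovered. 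Thus the task reduces to establishing, with respect to $\| \cdot \|_{X_Q}$ and $\| \cdot \|_Y$, (i) continuity of $A_h$ and $B_h$, (ii) coercivity of $A_h$ on $\ker B_h$, and (iii) an inf-sup condition for $B_h$.

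For (i), continuity of $A_h$ is inherited verbatim from \Cref{thm: stability MS-MFE}: since $\BDM1 \subseteq \P1^d$, \Cref{lem: quadrature_1} gives $\| \cdot \|_{Q_1} \eqsim \| \cdot \|_\Omega$, so $\langle A_h \eta_h, \eta_h' \rangle \lesssim \| \eta_h \|_\Omega \| \eta_h' \|_\Omega \le \| \eta_h \|_{X_Q} \| \eta_h' \|_{X_Q}$. For $B_h$, the displacement term is standard, while the two $Q_1$-rotation terms are precisely what $\| \cdot \|_{X_Q}$ controls: using the definition of $\Pi_R^Q$ to write $(\div \ell \omega_h, r_h')_{Q_1} = (\Pi_R^Q \div \ell \omega_h, r_h')_{Q_1}$, followed by a $Q_1$-Cauchy--Schwarz inequality and the norm equivalence on $\P1$, yields $\langle B_h \eta_h, v_h' \rangle \lesssim \| \eta_h \|_{X_Q} \| v_h' \|_Y$. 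For (ii), I would characterize $\ker B_h$: testing \eqref{eqs:B_h and f_h} with $(u_h', 0)$ and using $\div \Sigma_h \subseteq U_h$ gives $\div \sigma_h = 0$, while testing with $(0, r_h')$ gives $\Pi_R^Q \div \ell \omega_h = \Pi_R^Q \asym \sigma_h$. Hence on the kernel $\| \Pi_R^Q \div \ell \omega_h \|_\Omega = \| \Pi_R^Q \asym \sigma_h \|_\Omega \lesssim \| \sigma_h \|_\Omega$, and combining this with the pointwise positive-definiteness \eqref{A-pd} evaluated at the quadrature nodes gives $\langle A_h \eta_h, \eta_h \rangle \eqsim \| \sigma_h \|_\Omega^2 + \| \omega_h \|_\Omega^2 \gtrsim \| \eta_h \|_{X_Q}^2$, exactly as in \Cref{thm: stability full MFE}.

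The main obstacle is (iii), the inf-sup condition for $B_h$, which is why \Cref{thm: stability MS-MFE} was declared inapplicable once quadrature enters the rotation forms. Choosing $\omega_h = 0$ reduces the condition to a $Q_1$-quadrature analogue of the elasticity inf-sup \eqref{eq:inf-sup}, with $(\asym \sigma_h, r_h)_{Q_1}$ in place of $(\asym \sigma_h, r_h)_\Omega$. This cannot be deduced by perturbing the $L^2$ inf-sup of \Cref{thm: stability full BDM1-L1}, since the quadrature error $|(\asym \sigma_h, r_h)_{Q_1} - (\asym \sigma_h, r_h)_\Omega|$ acts on products in $\P2$, on which $Q_1$ is not exact, and by inverse estimates this error is $O(1)$ rather than $o(1)$. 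I would instead establish the $Q_1$-elasticity inf-sup directly, following the supremizer construction underlying the MSMFE-1 method \cite[Sec.~4]{ambartsumyan2020multipoint}: given $(u_h, r_h)$, the surjectivity of $\div : \Sigma_h \to U_h$ (standard for $\BDM1$-$\P0$) supplies a field controlling the displacement component, and a divergence-free $\BDM1$ field whose asymmetry matches $r_h$ at the mesh vertices supplies, through $(\asym \sigma_h, r_h)_{Q_1} \eqsim \| r_h \|_{Q_1}^2 \eqsim \| r_h \|_\Omega^2$, control of the rotation component; the two contributions are then combined by the standard two-parameter argument. With (i)--(iii) in hand, saddle-point theory \cite[Sec.~4.2.3]{boffi2013mixed} yields existence, uniqueness, and the stability bound in the $X_Q$-norm, which \Cref{lem:discrete norm equivalence} converts to the stated bound.
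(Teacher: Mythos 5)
Your proposal is correct and follows essentially the same route as the paper: work in the $\| \cdot \|_{X_Q}$-norm, transfer to $\| \cdot \|_{X_h}$ via \Cref{lem:discrete norm equivalence}, verify continuity of $A_h$ and $B_h$ and coercivity on $\ker B_h$ exactly as in \Cref{thm: stability full MFE} with $\Pi_R$ replaced by $\Pi_R^Q$, and obtain the inf-sup condition for $B_h$ by restricting to $\omega_h = 0$ and invoking the $Q_1$-quadrature elasticity inf-sup of \cite{ambartsumyan2020multipoint}. The only cosmetic difference is that you sketch the supremizer construction behind that quadrature inf-sup, whereas the paper cites it as a black box; the key ingredient and its source are identical.
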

\begin{proof}
    \Cref{thm: stability MS-MFE} does not apply because $B_h \ne B$. Nevertheless, we can follow the proof of \Cref{thm: stability full MFE}, using the equivalence of norms from \Cref{lem: quadrature_1,lem:discrete norm equivalence} when necessary. For instance, the continuity of $B_h$ is given by
    \begin{align}
        % \langle A_h \eta_h, \eta_h' \rangle
        % &\lesssim \| \eta_h \|_{Q_1} \| \eta_h' \|_{Q_1}
        % \eqsim \| \eta_h \|_\Omega \| \eta_h' \|_\Omega
        % \lesssim \| \eta_h \|_{X_h} \| \eta_h' \|_{X_h}, &
        % \forall \eta_h, \eta_h' &\in X_h \\
        \langle B_h \eta_h, v_h \rangle
        &\le \| \div \sigma_h \|_\Omega \| u_h \|_\Omega
        + \| \asym \sigma_h \|_{Q_1} \| r_h \|_{Q_1}
        + \| \Pi_R^Q \div \ell \omega_h \|_{Q_1} \| r_h \|_{Q_1} \nonumber\\
        &\eqsim \| \div \sigma_h \|_\Omega \| u_h \|_\Omega
        + \| \asym \sigma_h \|_\Omega \| r_h \|_\Omega
        + \| \Pi_R^Q \div \ell \omega_h \|_\Omega \| r_h \|_\Omega \nonumber\\
        &\le \| \eta_h \|_{X_Q} \| v_h \|_Y
        \eqsim \| \eta_h \|_{X_h} \| v_h \|_Y, &
        \forall (\eta_h, v_h) &\in X_h \times Y_h.
    \end{align}
    The continuity of $A_h$ follows a similar calculation. 
    For the coercivity of $A_h$, we apply \eqref{eq: Ker B} with $\Pi_R$ replaced by $\Pi_R^Q$.
    % since for functions $\eta_h \in X_h$ in the kernel of $B_h$, we note that
    % \begin{align} \label{eq: Ker B_h}
    %     \div \sigma_h &= 0, &
    %     \Pi_R^Q \div \ell \omega_h &= \Pi_R^Q \asym \sigma_h,
    % \end{align}
    % The coercivity in the $X_h$ norm now follows from 
    % \begin{align}
    %     \| \div \sigma_h \|_\Omega^2 + 
    %     \| \Pi_R^Q \div \ell \omega_h \|_\Omega^2
    %     = \| \Pi_R^Q \asym \sigma_h \|_\Omega^2
    %     \lesssim \| \sigma_h \|_\Omega^2
    %     \eqsim \| \sigma_h \|_{Q_1}^2
    %     \lesssim \langle A_h \eta_h, \eta_h \rangle
    % \end{align}
% 
    It remains to show the inf-sup condition on $B_h$. From \cite{ambartsumyan2020multipoint}, we obtain the following inf-sup condition with a quadrature rule on the asymmetry term:
    \begin{align}        
        \inf_{(u_h, r_h) \in U_h \times R_h} \sup_{\sigma_h \in \Sigma_h}
        \frac{
            -(\div \sigma_h, u_h)_\Omega + (\asym \sigma_h, r_h)_{Q_1}
        }{\| (\sigma_h, 0) \|_X \| (u_h, r_h) \|_Y} \gtrsim 1.
    \end{align}
    The required inf-sup condition on $B_h$ now follows from \eqref{eq: infsup from elasticity}.
\end{proof}

By construction, the $\BDM1$-$\L1$ scheme is $\ell$-robust, since it reduces to the MSMFE-1 method of \cite[Sec.~4]{ambartsumyan2020multipoint}, in case of $\ell = 0$. Convergence is shown in the next theorem.

\begin{theorem}[Convergence] \label{thm:conv MSMFE BDML}
    If $\ell$ satisfies the assumptions of \Cref{lem:discrete norm equivalence}, 
    then the $\BDM1$-$\L1$ MS-MFE method converges linearly, i.e.
    \begin{align}
        \| \hat \eta_h - \eta \|_{X_h} + \| \hat v_h - v \|_Y \lesssim h.
    \end{align}
\end{theorem}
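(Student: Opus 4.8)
The plan is to adapt the convergence proof of the multipoint stress method from Theorem~\ref{thm:conv MS-MFE}, which cannot be cited directly because the operator $B_h$ differs from $B$ in this reducible scheme (the quadrature rule $Q_1$ is applied to the asymmetry and couple-stress divergence terms, and the right-hand side uses $\Pi_R f_\omega$). First I would establish the appropriate error equation: subtracting the full mixed finite element problem \eqref{eq: system full-MFE} (with operator $B$ and right-hand side $f$) from the MS-MFE problem \eqref{eq: system BDM L1} (with operators $A_h, B_h$ and data $f_h$), and then add and subtract the relevant quadrature-modified terms to both sides. This shows that the pair $(\hat\eta_h - \eta_h, \hat v_h - v_h)$ solves a problem of the form \eqref{eq: system BDM L1} whose right-hand side is a sum of consistency functionals: one measuring $\langle (A - A_h)\eta_h, \cdot\rangle$ as before, plus new terms measuring $\langle (B - B_h)\cdot, v_h\rangle$, $\langle (B-B_h)\hat\eta_h, \cdot\rangle$, and the data discrepancy $\langle f - f_h, \cdot\rangle$.

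Next I would invoke the stability estimate from Theorem~\ref{thm:stability MSMFE BDML} to bound $\|\hat\eta_h - \eta_h\|_{X_h} + \|\hat v_h - v_h\|_Y$ by the dual norms of those consistency functionals. The term $\langle (A - A_h)\eta_h, \cdot\rangle$ is handled exactly as in \eqref{eq: bound diff A}, using the exactness of $Q_1$ on piecewise constants from Lemma~\ref{lem: quadrature_1} together with the $L^2$-approximation property of $\P0$, yielding a bound of order $h$. The genuinely new work is bounding the quadrature consistency of $B_h$ versus $B$: the differences $(\asym\sigma_h, r_h)_{Q_1} - (\asym\sigma_h, r_h)_\Omega$ and $(\div\ell\omega_h, r_h)_{Q_1} - (\div\ell\omega_h, r_h)_\Omega$ measure the quadrature error on products of piecewise-linear functions. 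Since $Q_1$ is exact whenever one factor is piecewise constant (Lemma~\ref{lem: quadrature_1}), I would insert $\Pi_0$ projections of one factor so that the quadrature error is controlled by $\|(I-\Pi_0)(\cdot)\|$ times the other factor, each gaining an $O(h)$ factor from the approximation property of $\P0$. The data term $\langle f - f_h,\cdot\rangle$, stemming from $f_\omega$ versus $\Pi_R f_\omega$ evaluated with $Q_1$, is bounded analogously using the projection error of $\Pi_R$ and the quadrature exactness.

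Finally I would combine these consistency bounds with the linear convergence of the underlying mixed finite element method from Theorem~\ref{thm:convergence BDM1-L1}, namely $\|\eta_h - \eta\|_{X_h} + \|v_h - v\|_Y \lesssim h$, via a triangle inequality exactly as in the closing step of Theorem~\ref{thm:conv MS-MFE}. The main obstacle I anticipate is the careful treatment of the quadrature error on the couple-stress term $(\div\ell\omega_h, r_h)_{Q_1}$: because $\ell$ is only assumed bounded in $W^{1,\infty}$ (with the elementwise $W^{2,\infty}$ regularity of Lemma~\ref{lem:discrete norm equivalence}) rather than piecewise linear, the product $\ell\omega_h$ need not be piecewise quadratic, so $Q_1$ is not exact on it. I expect this is resolved by the same device used in \eqref{eq: bound problematic term} and \eqref{eq: bound problematic term L1}, replacing $\ell$ by $\Pi_0\ell$ on each element and absorbing the elementwise variation of $\ell$ at cost $O(h)$, combined with a discrete inverse inequality to control $\|\div\omega_h\|_\Omega$ since the discrete norm only bounds $\|\Pi_R\div\ell\omega_h\|_\Omega$; the norm equivalence of Lemma~\ref{lem:discrete norm equivalence} then permits all estimates to be stated uniformly in the $X_h$-norm.
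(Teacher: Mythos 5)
Your plan follows the paper's skeleton — error equation, stability from \Cref{thm:stability MSMFE BDML}, consistency bounds, and a final triangle inequality with \Cref{thm:convergence BDM1-L1} — and your treatment of $\langle (A - A_h)\eta_h, \cdot\rangle$ and of the $g$-side term $\langle (B - B_h)\eta_h', v_h\rangle$ (insert a $\Pi_0$ projection, use exactness of $Q_1$) matches \eqref{eq: bound diff A} and \eqref{eq: first-order term B_h}. The gap is in the $Y_h'$-side consistency functional. The paper does \emph{not} bound $\langle (B - B_h)\eta_h, \cdot\rangle$ and $\langle f_h - f, \cdot\rangle$ separately, and for good reason: it first observes that, since $\eta_h$ solves \eqref{eq: system full-MFE}, the exactly-integrated part of this combined functional vanishes identically, leaving only $Q_1$-evaluated terms; it then uses the continuous angular momentum balance \eqref{eq: angular momentum rep} to substitute $\Pi_R f_\omega = \asym \Pi_{\L1}\sigma - \Pi_R \div \ell \omega$, so the whole functional collapses to $-(\asym(\sigma_h - \Pi_{\L1}\sigma), r_h')_{Q_1} + (\div \ell \omega_h - \Pi_R \div \ell \omega, r_h')_{Q_1}$, i.e.\ differences between the discrete solution and (projections of) the exact solution, which are $O(h)$ by \Cref{thm:convergence BDM1-L1}, \Cref{lem:discrete norm equivalence}, and approximation. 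This cancellation is the central idea of the paper's proof and is absent from your plan.

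Without it, your substitute for the hardest term fails quantitatively. You propose to estimate the raw quadrature error $(\div \ell \omega_h, r_h')_\Omega - (\div \ell \omega_h, r_h')_{Q_1}$ by "the device of \eqref{eq: bound problematic term}": replace $\ell$ by $\Pi_0 \ell$ at cost $O(h)$ and control $\| \div \omega_h \|_\Omega$ by a discrete inverse inequality. But in \eqref{eq: bound problematic term} and \eqref{eq: bound problematic term L1} the power of $h$ lost to the inverse inequality is recovered from the pairing with $(I - \Pi_R) r$, whose norm is $O(h)$ (resp.\ $O(h^2)$) because $r$ is the \emph{exact} solution. In the consistency functional the pairing is against an arbitrary test function $r_h'$, for which only $\| r_h' \|_\Omega$ is available; the device then yields $h \cdot \| \div \omega_h \|_\Omega \| r_h' \|_\Omega \lesssim h \cdot h^{-1} \| \omega_h \|_\Omega \| r_h' \|_\Omega$, which is $O(1)$, not $O(h)$. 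The same imprecision affects your claim that inserting $\Pi_0$ gains "an $O(h)$ factor from the approximation property of $\P0$": for a discrete factor such as $r_h$, $\asym \sigma_h$, or $\omega_h$, the bound $\| (I - \Pi_0)\phi_h \|_\Omega \lesssim h \| \nabla \phi_h \|_\Omega$ is vacuous, since $\| \nabla \phi_h \|_\Omega$ is only controlled by $h^{-1} \| \phi_h \|_\Omega$; every such estimate must be routed through the exact solution via the MFE error (e.g.\ $\| \div \omega_h \|_\Omega \le h^{-1} \| \omega_h - \pi_W \omega \|_\Omega + \| \div \pi_W \omega \|_\Omega \lesssim 1$), which is precisely what the paper's cancellation-based rewriting accomplishes systematically. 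A further minor slip: the third consistency term must be $\langle (B - B_h)\eta_h, \cdot\rangle$, not $\langle (B - B_h)\hat\eta_h, \cdot\rangle$; with your version the left-hand side of the error equation is no longer the $B_h$-saddle-point form, so \Cref{thm:stability MSMFE BDML} does not apply directly.
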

\begin{proof}
    Similar to \Cref{thm:conv MS-MFE}, we subtract \eqref{eq: system full-MFE} from \eqref{eq: system BDM L1}. We then note that $(\hat \eta_h - \eta_h, \hat v_h - v_h)$ solves \eqref{eq: system BDM L1} with right-hand side
    \begin{align}
        \langle g, \eta_h' \rangle 
        &\coloneqq 
        \langle (A - A_h) \eta_h, \eta_h' \rangle
        - \langle (B - B_h) \eta_h', v_h \rangle
        , &
        \langle f, v_h' \rangle 
        &\coloneqq 
        \langle (B - B_h) \eta_h, v_h' \rangle
        + (\Pi_R f_\omega, r_h')_{Q_1}
        - (f_\omega, r_h')_\Omega.
    \end{align}
    The first term of $g$ is bounded by \eqref{eq: bound diff A}, so we consider the second term instead. Letting $\Pi_0$ denote the projection onto the piecewise constants, we use \Cref{lem: quadrature_1} and the continuity of $B$ and $B_h$ to derive
    \begin{align} \label{eq: first-order term B_h}
        \langle (B - B_h) \eta_h', v_h \rangle
        &= 
        \langle (B - B_h) \eta_h', v_h - \Pi_0 v \rangle \nonumber \\
        &\lesssim 
        \| \eta_h' \|_{X_h} \| r_h - \Pi_0 r \|_\Omega
        \nonumber \\
        &\lesssim
        \| \eta_h' \|_{X_h} (\| r_h - r \|_\Omega + h \| r \|_{1, \Omega}).
    \end{align}
    
    Next, we consider the functional $f$, and note that
    \begin{align}
        \langle f, v_h' \rangle
        &=
        \left(
        (\asym \sigma_h, r_h')_\Omega
        - (\div \ell \omega_h, r_h')_\Omega
        - (f_\omega, r_h')_\Omega\right)
        -
        \left(
        (\asym \sigma_h, r_h')_{Q_1}
        - (\div \ell \omega_h, r_h')_{Q_1}
        - (\Pi_R f_\omega, r_h')_{Q_1} \right) \nonumber \\
        &= - (\asym \sigma_h, r_h')_{Q_1}
        + (\div \ell \omega_h, r_h')_{Q_1}
        + (\Pi_R f_\omega, r_h')_{Q_1},
    \end{align} 
    where the first terms vanish because $\eta_h$ solves \eqref{eq: system full-MFE}. Now, using the fact that the solution to \eqref{eq:cosserat_weak} satisfies \eqref{eq: angular momentum rep}, we have 
    $\Pi_R f_\omega = \Pi_R (\asym \sigma - \div \ell \omega) = \asym \Pi_{\L1} \sigma - \Pi_R \div \ell \omega$ with
    $\Pi_{\L1}$ the $L^2$ projection onto $\L1^{d \times d}$. Substituting this identity gives us
    \begin{align}
        \langle f, v_h' \rangle
        % &=
        % \langle B_h \eta_h, v_h' \rangle
        % - (\Pi_R (\asym \sigma - \div \ell \omega), r_h')_{Q_1} \nonumber \\
        &=
        - (\asym (\sigma_h - \Pi_{\L1} \sigma), r_h')_{Q_1} + (\div \ell \omega_h - \Pi_R \div \ell \omega, r_h')_{Q_1} \nonumber \\
        &\lesssim
        \left(
        \| \sigma_h - \Pi_{\L1} \sigma \|_{Q_1} + \| \Pi_R^Q (\div \ell \omega_h - \Pi_R \div \ell \omega) \|_{Q_1}
        \right)
        \| r_h' \|_{Q_1} \nonumber \\
        &\lesssim
        \left(
        \| \sigma_h - \Pi_{\L1} \sigma \|_\Omega 
        + \| \Pi_R^Q \div \ell (\omega_h - \pi_W \omega) \|_\Omega
        + \| \Pi_R^Q (\div \ell \pi_W \omega - \Pi_R \div \ell \omega) \|_\Omega
        \right)
        \| r_h' \|_\Omega.
    \end{align}
    We now bound each of the three terms separately. The first is straightforward
    \begin{align}
        \| \sigma_h - \Pi_{\L1} \sigma \|_\Omega 
        &\le \| \sigma_h - \sigma \|_\Omega + \| (I - \Pi_{\L1}) \sigma \|_\Omega.
    \end{align}
    For the second term, we use \Cref{lem:discrete norm equivalence}:
    \begin{align}
        \| \Pi_R^Q \div \ell (\omega_h - \pi_W \omega) \|_\Omega
        &\lesssim 
        \| \omega_h - \pi_W \omega \|_\Omega +
        \| \Pi_R \div \ell (\omega_h - \pi_W \omega) \|_\Omega \nonumber \\
        &\lesssim 
        \| \omega_h - \omega \|_\Omega +
        \| (I - \pi_W) \omega \|_\Omega +
        \| \Pi_R \div \ell (\omega_h - \omega) \|_\Omega +
        \| \Pi_R \div \ell (I - \pi_W) \omega \|_\Omega \nonumber \\
        &\lesssim 
        \| \omega_h - \omega \|_\Omega +
        \| (I - \pi_W) \omega \|_\Omega +
        \| \Pi_R \div \ell (\omega_h - \omega) \|_\Omega +
        \| \div (I - \pi_W) \omega \|_\Omega.
    \end{align}
    The third and final term is bounded as
    \begin{align}
        \| \Pi_R^Q (\div \ell \pi_W \omega - \Pi_R \div \ell \omega) \|_\Omega
        &\lesssim 
        \| \div \ell (I - \pi_W) \omega \|_\Omega
        + \| (I - \Pi_R) \div \ell \omega) \|_\Omega \nonumber \\
        &\lesssim 
        \| (I - \pi_W) \omega \|_\Omega
        + \| \div (I - \pi_W) \omega \|_\Omega
        + \| (I - \Pi_R) \div \ell \omega) \|_\Omega.
    \end{align}

    The proof concludes by combining the stability bound from \Cref{thm:stability MSMFE BDML} with the linear convergence from \Cref{thm:convergence BDM1-L1}.
\end{proof}

The employed quadrature rule in $B_h$ allows us to further reduce the scheme to a system involving only the displacement variable, in regions of the domain where $\ell = 0$. We refer the interested reader to \cite{ambartsumyan2020multipoint} for details concerning this additional reduction.

\section{A higher-order scheme with continuous rotations: \texorpdfstring{$\RT1$-$\L1$}{RT1-L1}}
\label{sec:RT1-L1}

The third multipoint stress mixed finite element scheme we consider aims to achieve second-order convergence by employing finite element spaces of higher order. For this method, we consider the following spaces, as outlined previously in \Cref{tab: summary}:
\begin{align} \label{eq: Spaces RT1-L1}
    \Sigma_h &\coloneqq \RT1^d \cap \Sigma, &
    W_h &\coloneqq \RT1^{k_d} \cap W, &
    U_h &\coloneqq \P1^d, &
    R_h &\coloneqq \L1^{k_d}.
\end{align}

The idea behind this method is to apply the quadrature rule from \Cref{lem: quadrature_2} and obtain second-order convergence. However, since the inclusion \eqref{eq:div-prop} does not hold for the above choice of spaces, the accuracy of the method for $\ell > 0$ is reduced to first-order. We do obtain second-order accuracy for the case of linear elasticity with $\ell = 0$. In the analysis will use the same strategy as for the previous two methods, and only highlight the differences, where necessary.

\subsection{The mixed finite element method}

As in \Cref{sec:reducible_scheme}, the above choice of spaces is not considered in \cite{boon2025mixed}, so the resulting mixed finite element method is a new method for the linear Cosserat equations. Moreover, to our knowledge, the spaces of \eqref{eq: Spaces RT1-L1} have not previously been analyzed as a discretization of elasticity with weak symmetry. We therefore first show the relevant inf-sup stability by employing the following lemma.

\begin{lemma} \label{lem:elasticity_triple_constr}
    Let $\Sigma_h \times U_h$ be Darcy-stable. If a finite element space $\Theta_h$ exists such that $\nabla \times \Theta_h \subseteq \Sigma_h$ and $(S\Theta_h) \times R_h$ is Stokes-stable,
    then $\Sigma_h \times U_h \times R_h$ is elasticity-stable in the sense of \Cref{def:elasticity triplet}.
\end{lemma}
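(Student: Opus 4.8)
The plan is to verify the inf-sup condition \eqref{eq:inf-sup} directly by constructing, for each given pair $(u_h, r_h) \in U_h \times R_h$, a single test field $\sigma_h \in \Sigma_h$ that controls $u_h$ through $\div \sigma_h$ and $r_h$ through $\asym \sigma_h$, while satisfying $\|(\sigma_h, 0)\|_X \lesssim \|(u_h, r_h)\|_Y$. This is the classical two-ingredient construction from the theory of mixed elasticity with weak symmetry: I would build $\sigma_h = \sigma_h^1 + \delta\,\sigma_h^2$ as the sum of a ``divergence part'' $\sigma_h^1$ and an ``asymmetry part'' $\sigma_h^2$, with a scaling parameter $\delta > 0$ fixed only at the end.

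For the asymmetry part I would exploit the hypotheses $\nabla\times\Theta_h\subseteq\Sigma_h$ together with the Stokes stability of $(S\Theta_h)\times R_h$. For the given $r_h$, Stokes stability furnishes $\theta_h\in\Theta_h$ with $(\div(S\theta_h),r_h)_\Omega\gtrsim\|r_h\|_\Omega^2$ and $\|S\theta_h\|_{1,\Omega}\lesssim\|r_h\|_\Omega$. Setting $\sigma_h^2 \coloneqq \nabla\times\theta_h\in\Sigma_h$, two structural facts make this the right choice. First, $\div\sigma_h^2 = \div(\nabla\times\theta_h)=0$ (the matrix analogue of ``the divergence of a curl vanishes''), so $\sigma_h^2$ does not interfere with the displacement balance. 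Second, the identity \eqref{eq: asym identity} yields $\asym\sigma_h^2 = \div(S\theta_h)$, whence $(\asym\sigma_h^2,r_h)_\Omega\gtrsim\|r_h\|_\Omega^2$. Since $S$ is a fixed invertible operator, $\|\sigma_h^2\|_\Omega=\|\nabla\times\theta_h\|_\Omega\lesssim\|\theta_h\|_{1,\Omega}\eqsim\|S\theta_h\|_{1,\Omega}\lesssim\|r_h\|_\Omega$, and combined with $\div\sigma_h^2=0$ this gives $\|(\sigma_h^2,0)\|_X\lesssim\|r_h\|_\Omega$.

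For the divergence part I would invoke the Darcy stability of $\Sigma_h\times U_h$, which provides $\sigma_h^1\in\Sigma_h$ with $(\div\sigma_h^1,u_h)_\Omega\gtrsim\|u_h\|_\Omega^2$ and $\|(\sigma_h^1,0)\|_X\lesssim\|u_h\|_\Omega$. Using $\div\sigma_h^2=0$, the tested quantity expands as
\[
(\div\sigma_h,u_h)_\Omega + (\asym\sigma_h,r_h)_\Omega = (\div\sigma_h^1,u_h)_\Omega + (\asym\sigma_h^1,r_h)_\Omega + \delta\,(\asym\sigma_h^2,r_h)_\Omega,
\]
where the first and last terms are the coercive contributions in $\|u_h\|_\Omega^2$ and $\delta\|r_h\|_\Omega^2$, respectively.

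The single cross term $(\asym\sigma_h^1,r_h)_\Omega$ is the only genuine obstacle, and is exactly what the scaling $\delta$ is for. Bounding $|(\asym\sigma_h^1,r_h)_\Omega|\lesssim\|\sigma_h^1\|_\Omega\|r_h\|_\Omega\lesssim\|u_h\|_\Omega\|r_h\|_\Omega$ and applying Young's inequality lets me absorb a small multiple of $\|u_h\|_\Omega^2$ and a multiple of $\|r_h\|_\Omega^2$; choosing $\delta$ large enough that the remaining coefficient of $\|r_h\|_\Omega^2$ stays positive, the expression is bounded below by $\|(u_h,r_h)\|_Y^2$ up to a constant. With $\delta$ now fixed, the triangle inequality gives $\|(\sigma_h,0)\|_X\lesssim\|(u_h,r_h)\|_Y$, and dividing the lower bound by this norm proves \eqref{eq:inf-sup}. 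The point to watch throughout is uniformity of the hidden constants in $h$: they reduce to the Stokes and Darcy inf-sup constants (uniform by hypothesis) together with the fixed bounds on $S$, and since $\ell$ never enters \eqref{eq:inf-sup}, $\ell$-robustness is automatic.
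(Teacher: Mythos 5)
Your proposal is correct, and it is essentially the argument behind the paper's proof: the paper does not spell out the construction but simply cites \cite[Thm.~4.2]{ambartsumyan2020multipoint} and \cite[Thm.~1]{lee2016towards}, whose proofs use exactly this two-field construction --- a Darcy-stable field for the divergence, plus a divergence-free field $\nabla \times \theta_h$ whose asymmetry controls $r_h$ via the identity \eqref{eq: asym identity} and Stokes stability, combined with a scaling parameter and Young's inequality. The only stylistic difference from the cited proofs is that some of them correct the asymmetry exactly in the $\Pi_R$-projected sense rather than absorbing the cross term by Young's inequality, but both routes are standard and yield the same uniform inf-sup constant.
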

\begin{proof}
    See e.g. \cite[Thm. 4.2]{ambartsumyan2020multipoint} or \cite[Thm. 1]{lee2016towards}.
\end{proof}

\begin{lemma} \label{lem: RT1-L1 elasticity stable}
    The triplet $\RT1^d \times \P1^d \times \L1^{k_d}$ is elasticity-stable.
\end{lemma}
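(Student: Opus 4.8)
The plan is to verify the hypotheses of \Cref{lem:elasticity_triple_constr} for the triplet $\RT1^d \times \P1^d \times \L1^{k_d}$, since that lemma reduces elasticity-stability to a Darcy-stability condition on $\Sigma_h \times U_h$ together with a Stokes-stability condition on an auxiliary pair $(S\Theta_h) \times R_h$. Concretely, I would need to (i) check that $\RT1^d \times \P1^d$ is Darcy-stable, (ii) produce a potential space $\Theta_h$ with $\nabla \times \Theta_h \subseteq \RT1^d$, and (iii) show that the associated pair $(S\Theta_h) \times \L1^{k_d}$ is Stokes-stable. The guiding identity is \eqref{eq: asym identity}, $\asym (\nabla \times \theta) = \div (S \theta)$, which is exactly what allows the asymmetry constraint on $\sigma_h = \nabla \times \theta_h$ (automatically divergence-free) to be realized through a divergence constraint on $S\theta_h$.

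For (i), the pair $\RT1 \times \P1$ is the standard order-one Raviart--Thomas/discontinuous-pressure pair, for which $\div$ maps $\RT1$ onto $\P1$ and the Darcy inf-sup holds; applying this componentwise gives Darcy-stability of $\RT1^d \times \P1^d$. For (ii) and (iii), I would choose $\Theta_h$ to be the continuous (Lagrange) degree-two space matching the shape of $\mathbb{W}$: a vector field $(\P2 \cap C^0)^d$ in 2D and a matrix field $(\P2 \cap C^0)^{d \times d}$ in 3D. The inclusion $\nabla \times \Theta_h \subseteq \RT1^d$ then follows row-wise: the curl of a globally continuous, piecewise-quadratic field is piecewise linear with continuous normal trace, hence lies in $\RT1$, using the inclusion $\P1^d \subseteq \RT1$.

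For the Stokes-stability in (iii), I would exploit that $S$ is an algebraic isomorphism (transpose plus a trace correction) that preserves both continuity and polynomial degree, so $S\Theta_h$ is again the continuous degree-two space of the same shape. The constraint $\div(S\theta_h)$ is then paired against $R_h = \L1^{k_d}$, and this is precisely the Taylor--Hood $\P2$--$\P1$ velocity--pressure pairing: a scalar-pressure Stokes pair in 2D, and in 3D a matrix-valued velocity whose matrix divergence decouples row-by-row into $d$ independent copies of the $\P2$--$\P1$ Taylor--Hood pair. Since Taylor--Hood is inf-sup stable for $d = 2, 3$ on shape-regular simplicial meshes, $(S\Theta_h) \times R_h$ is Stokes-stable, and \Cref{lem:elasticity_triple_constr} then yields elasticity-stability in the sense of \Cref{def:elasticity triplet}.

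The main obstacle I anticipate is the bookkeeping in the 3D case: verifying carefully that the row-wise curl of the continuous matrix space lands in the $H_{\divo}$-conforming space $\RT1$ (the normal-trace continuity coming from continuity of the tangential traces of $\Theta_h$), and that the $S$-transformed matrix Stokes problem genuinely decouples into standard Taylor--Hood problems so that the known inf-sup constant transfers without degradation. A secondary point is confirming that the boundary conditions built into $\Sigma_h$ and $R_h$ are compatible with choosing $\Theta_h$ subject to matching homogeneous conditions, so that the interior inf-sup construction is not disturbed.
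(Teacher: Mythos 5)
Your proposal is correct and follows essentially the same route as the paper: both invoke \Cref{lem:elasticity_triple_constr} with $\Theta_h$ chosen as the continuous piecewise-quadratic (Taylor--Hood velocity) space shaped like $\mathbb{W}$, observe that $S$ preserves this space so that $(S\Theta_h) \times \L1^{k_d}$ reduces componentwise to the stable $\P2$--$\P1$ Taylor--Hood pair, and verify $\nabla \times \Theta_h \subseteq \RT1^d$. Your additional explicit check of Darcy-stability of $\RT1^d \times \P1^d$ and of the $H_{\divo}$-conformity of the row-wise curl only fills in details the paper leaves implicit.
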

\begin{proof}
    The Taylor-Hood pair $\L2^d \times \L1$ is Stokes-stable \cite[Sec.~8.8]{boffi2013mixed}. Let $\Theta_h \coloneqq \L2^{d \times k_d}$, then $S\Theta_h = \Theta_h$ and so $S(\Theta_h) \times R_h = \L2^{d \times k_d} \times \L1^{k_d}$ is Stokes-stable. Moreover $\nabla \times \Theta_h \subseteq \RT1^d = \Sigma_h$ and therefore \Cref{lem:elasticity_triple_constr} applies.
\end{proof}

\begin{theorem}[Stability] \label{thm:stability RT1L1}
    If the discrete spaces are given by \eqref{eq: Spaces RT1-L1}, then Problem \eqref{eq: system full-MFE} admits a unique solution that satisfies
    \begin{align}
        \| \eta_h \|_{X_h} + \| v_h \|_Y \lesssim \| g \|_{X_h'} + \| f \|_{Y'}.
    \end{align}
\end{theorem}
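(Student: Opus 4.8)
The plan is to verify the hypotheses of the general stability result, \Cref{thm: stability full MFE}, for the specific spaces in \eqref{eq: Spaces RT1-L1}. That theorem requires two ingredients: the inclusion $\div \Sigma_h \subseteq U_h$, and that the triplet $\Sigma_h \times U_h \times R_h$ is elasticity-stable in the sense of \Cref{def:elasticity triplet}. Once both are established, the conclusion follows verbatim, with a stability constant independent of $\ell \ge 0$.

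First I would check the divergence inclusion. Since $\Sigma_h = \RT1^d \cap \Sigma$ and $U_h = \P1^d$, this follows from the standard Raviart--Thomas property $\div \RT1 \subseteq \P1$ recalled in the preliminary inclusions, applied componentwise; the intersection with $\Sigma$ only restricts normal traces on $\partial_e \Omega$ and does not affect the range of the divergence. Second, and more substantively, I would invoke \Cref{lem: RT1-L1 elasticity stable}, which has just been proven and states precisely that $\RT1^d \times \P1^d \times \L1^{k_d}$ is elasticity-stable. Note that the discrete spaces in the theorem are the boundary-conforming versions $\Sigma_h = \RT1^d \cap \Sigma$ and $R_h = \L1^{k_d} \cap R$, so strictly one should remark that the inf-sup condition \eqref{eq:inf-sup} proved for the full spaces descends to the conforming subspaces; this is routine since the essential boundary conditions are homogeneous and the inf-sup test functions used in the Stokes/Darcy construction can be chosen to respect them.

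With these two hypotheses in hand, \Cref{thm: stability full MFE} applies directly and yields both existence and uniqueness of the solution to \eqref{eq: system full-MFE} together with the asserted bound $\| \eta_h \|_{X_h} + \| v_h \|_Y \lesssim \| g \|_{X_h'} + \| f \|_{Y'}$. The proof is therefore essentially a one-line citation of the general theorem once the two structural facts are recorded, exactly mirroring the proofs of \Cref{thm: stability full BDM1-P0} and \Cref{thm: stability full BDM1-L1}.

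The only genuine obstacle is the elasticity-stability of this particular triplet, which is why the authors isolated it as \Cref{lem: RT1-L1 elasticity stable}; since that lemma is already established via the BGG-type construction of \Cref{lem:elasticity_triple_constr} (choosing $\Theta_h = \L2^{d \times k_d}$ so that $S\Theta_h = \Theta_h$, using Taylor--Hood Stokes-stability and the inclusion $\nabla \times \Theta_h \subseteq \RT1^d$), the present theorem inherits it with no further work. I would expect the proof to read simply: the inclusion $\div \Sigma_h \subseteq U_h$ holds and the triplet is elasticity-stable by \Cref{lem: RT1-L1 elasticity stable}, hence \Cref{thm: stability full MFE} applies.
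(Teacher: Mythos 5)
Your proposal is correct and matches the paper's proof exactly: the paper's argument is precisely that $\div \Sigma_h \subseteq U_h$ holds and the triplet is elasticity-stable by \Cref{lem: RT1-L1 elasticity stable}, so \Cref{thm: stability full MFE} applies. The extra care you take with the boundary-conforming subspaces (and note that $R_h = \L1^{k_d}$ needs no intersection, since $\L1^{k_d} \subseteq L^2\mathbb{K} = R$ automatically) is a minor refinement the paper leaves implicit.
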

\begin{proof}
    The inclusion $\div \Sigma_h \subseteq U_h$ and \Cref{lem: RT1-L1 elasticity stable} allow us to invoke \Cref{thm: stability full MFE}.
\end{proof}

\begin{theorem}[Convergence] \label{thm:conv full RT1-L1}
    If the true solution $(\eta, v)$ is sufficiently regular, and the discrete spaces are given by \eqref{eq: Spaces RT1-L1}, then the mixed finite element method converges as
    \begin{align}
        \| \eta_h - \eta \|_{X_h} + \| v_h - v \|_Y \lesssim h.
    \end{align}
    Moreover, if $\ell = 0$, then
    \begin{align} \label{eq: quadratic RT1-L1}
        \| \eta_h - \eta \|_{X_h} + \| v_h - v \|_Y \lesssim h^2.
    \end{align}
\end{theorem}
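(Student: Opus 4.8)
The plan is to reuse the framework of \Cref{thm:conv full BDM1-P0} and \Cref{thm:convergence BDM1-L1} essentially verbatim for the first-order estimate and then to sharpen every bound to $O(h^2)$ in the degenerate regime $\ell = 0$. As before, I would let $\pi_X = (\pi_\Sigma, \pi_W)$ be the canonical $\RT1$ interpolant onto $X_h$ and $\Pi_Y = (\Pi_U, \Pi_R)$ the $L^2$ projection onto $Y_h$, subtract these from the error equation exactly as in \eqref{eq: three terms}, and observe that $(\eta_h - \pi_X \eta, v_h - \Pi_Y v)$ solves \eqref{eq: system full-MFE} with the same three right-hand-side functionals. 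The stability estimate of \Cref{thm:stability RT1L1} then reduces the proof to bounding those functionals.

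For the first-order bound I would follow \Cref{thm:convergence BDM1-L1} step by step. The only delicate point is the term $(\ell \div \omega_h', (I - \Pi_R) r)_\Omega$ appearing in \eqref{eq: bound term 2}: since $\RT1$-$\L1$ again violates the inclusion \eqref{eq:div-prop} (here $\div W_h = \P1^{k_d} \not\subseteq \L1^{k_d} = R_h$), I cannot subtract $\Pi_0 \ell$ and must instead invoke the quadratic approximation property of $R_h = \L1^{k_d}$ together with the inverse inequality $\| \div \omega_h' \|_\Omega \lesssim h^{-1} \| \omega_h' \|_\Omega$, reproducing the bound \eqref{eq: bound problematic term L1} exactly. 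Since the $\RT1$, $\P1$ and $\L1$ interpolants all possess at least first-order approximation, the remaining two functionals are handled as in \eqref{eq: bound term 1 conv} and \eqref{eq: bound term 3 conv}, yielding the stated $O(h)$ estimate.

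For the quadratic estimate \eqref{eq: quadratic RT1-L1} I would first note that at $\ell = 0$ both the discrete and continuous problems force $\omega_h = \omega = 0$ and the couple-stress bilinear forms vanish identically, so the system collapses to the weak-symmetry elasticity discretization on $\RT1^d \times \P1^d \times \L1^{k_d}$, whose stability is \Cref{lem: RT1-L1 elasticity stable}. I then re-examine the three functionals, now targeting $O(h^2)$. The term $\langle A(I - \pi_X)\eta, \eta_h'\rangle$ reduces to $(\mathcal{A}_\sigma (I - \pi_\Sigma)\sigma, \sigma_h')_\Omega \lesssim h^2 \| \sigma \|_{2,\Omega} \| \eta_h' \|_{X_h}$ by the $O(h^2)$ accuracy of $\RT1$. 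In $\langle B \eta_h', (I - \Pi_Y)v\rangle$ the displacement contribution vanishes because $\div \sigma_h' \in \div \RT1^d = \P1^d = U_h$ is $L^2$-orthogonal to $(I - \Pi_U)u$, while the asymmetry contribution $(\asym \sigma_h', (I - \Pi_R)r)_\Omega$ is controlled by Cauchy--Schwarz against $\| (I - \Pi_R)r \|_\Omega \lesssim h^2 \| r \|_{2,\Omega}$. Symmetrically, in $\langle B(I - \pi_X)\eta, v_h'\rangle$ the divergence term is annihilated by the commuting property $\Pi_U \div(I - \pi_\Sigma) = 0$, and the surviving $(\asym(I - \pi_\Sigma)\sigma, r_h')_\Omega$ is again $O(h^2)$ by Cauchy--Schwarz and the $\RT1$ estimate. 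Crucially, no duality or superconvergence argument is needed here: all three spaces are $O(h^2)$-accurate, so plain Cauchy--Schwarz suffices. Finally the interpolation error $\| (I - \pi_X)\eta \|_{X_h} + \| (I - \Pi_Y)v \|_Y$ is $O(h^2)$, the only nonstandard component being $\| \div(I - \pi_\Sigma)\sigma \|_\Omega = \| (I - \Pi_U)\div\sigma \|_\Omega$, which requires $H^2$-regularity of $\div\sigma$ and the $\P1$-projection estimate; a triangle inequality then delivers \eqref{eq: quadratic RT1-L1}.

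The main obstacle I anticipate is careful bookkeeping rather than a new idea: I must verify that every divergence-type contribution in $B$ is annihilated exactly, via $\div\RT1^d = U_h$ and the commuting interpolant, so that the surviving $\asym$ couplings, bounded only by plain Cauchy--Schwarz, still inherit the full $O(h^2)$ rate from the $\L1$ and $\RT1$ approximation properties, and that the $\| \div \sigma \|_\Omega$-component of the $X_h$-norm is controlled under the assumed regularity. For $\ell > 0$ the genuine limitation is the inverse-inequality step reproducing \eqref{eq: bound problematic term L1}, which caps the rate at $O(h)$ precisely because \eqref{eq:div-prop} fails.
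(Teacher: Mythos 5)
Your proposal is correct and follows essentially the same route as the paper: the same error-equation decomposition into the three functionals, the same reproduction of the inverse-inequality bound \eqref{eq: bound problematic term L1} (caused by the failure of \eqref{eq:div-prop}) which limits the general case to $O(h)$, and the same observation that this term vanishes when $\ell = 0$, so that the quadratic accuracy of the $\RT1$ and $\L1$ interpolants yields \eqref{eq: quadratic RT1-L1}. The extra details you supply (the vanishing of the displacement coupling via $\div\RT1^d = U_h$, the commuting property $\Pi_U \div (I - \pi_\Sigma) = 0$, and $\omega_h = \omega = 0$ at $\ell = 0$) are exactly the ingredients the paper's terse proof implicitly reuses from \Cref{thm:conv full BDM1-P0,thm:convergence BDM1-L1}.
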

\begin{proof}
    We follow the proofs of \Cref{thm:conv full BDM1-P0,thm:convergence BDM1-L1}. In this case, we employ the interpolation and projection operators $\pi_X$ and $\Pi_Y$, which are second-order accurate for the spaces \eqref{eq: Spaces RT1-L1}. With these operators, the calculations \eqref{eq: bound term 1 conv}, \eqref{eq: bound term 2 conv L1}, and \eqref{eq: bound term 3 conv} provide the bounds
    \begin{align}\label{eq:bounds-RT1-L1}
        \langle A (I - \pi_X) \eta, \eta_h' \rangle 
        &\lesssim h^2 \| \eta_h' \|_{X_h}, &
        \langle B \eta_h', (I - \Pi_Y) v \rangle 
        &\lesssim h \| \eta_h' \|_{X_h}, &
        \langle B (I - \pi_X) \eta, v_h' \rangle
        &\lesssim h^2 \| v_h' \|_Y.
    \end{align}
    Note that the second term is only first order accurate because of \eqref{eq: bound problematic term L1}. This is due to the fact that the inclusion \eqref{eq:div-prop} does not hold.
If, however, $\ell = 0$, then \eqref{eq: bound problematic term L1} is unnecessary and the source of this error disappears.
    The remainder of the proof is the same as for \Cref{thm:conv full BDM1-P0}, using the quadratic interpolation estimates.
\end{proof}

\begin{remark}
When $\ell = 0$, method \eqref{eq: system full-MFE} with choice of discrete spaces given in \eqref{eq: Spaces RT1-L1} is a new second-order mixed finite element method for linear elasticity with weak stress symmetry.
\end{remark}

\subsection{The multipoint stress mixed finite element method}

To formulate the multipoint stress mixed finite element method, we employ the $Q_2$-quadrature rule from \Cref{lem: quadrature_2} and approximate the bilinear form $A$ as:
\begin{align} \label{eq: A_h RT1}
    \langle A_h \eta_h , \eta_h'  \rangle
    &\coloneqq (\mathcal{A}_\sigma \sigma_h, \sigma_h')_{Q_2} + (\mathcal{A}_\omega \omega_h, \omega_h')_{Q_2}.
\end{align}
We then define the $\RT1$-$\L1$ multipoint stress mixed finite element method by the following problem: find $(\hat \eta_h, \hat v_h) \in X_h \times Y_h$ such that
\begin{align} \label{eq: system RT1 L1}
    \langle A_h \hat \eta_h, \eta_h' \rangle
    - \langle B \eta_h', \hat v_h \rangle
    + \langle B \hat \eta_h, v_h' \rangle
    &= 
    \langle g, \eta_h' \rangle
    + \langle f, v_h' \rangle, &
    \forall (\eta_h', v_h') &\in X_h \times Y_h.
\end{align}

\begin{remark} \label{rem: why not Bh}
    It is possible to approximate $B$ by $B_h$ from \eqref{eqs:B_h and f_h}. However, the $Q_1$ quadrature rule introduces a first order error term, cf.~\eqref{eq: first-order term B_h} in \Cref{thm:convergence BDM1-L1}, that we aim to avoid. If we used the $Q_2$ quadrature rule instead, then the rotation space $\L1$ does not localize and can therefore not be eliminated in the elasticity limit. The introduction of a quadrature rule in $B$ would therefore not lead to any practical benefits.
\end{remark}

\begin{theorem} \label{thm:stab/conv MSMFE RT1L1}
    The $\RT1$-$\L1$ MS-MFE method \eqref{eq: system RT1 L1} is stable and linearly convergent, i.e.
    \begin{align}\label{eq:RT1-L1-h}
        \| \hat \eta_h \|_{X_h} + \| \hat v_h \|_Y &\lesssim \| g \|_{X_h'} + \| f \|_{Y'}, &
        \| \hat \eta_h - \eta \|_{X_h} + \| \hat v_h - v \|_Y &\lesssim h.
    \end{align}
    Moreover, if $\ell = 0$, then the following quadratic convergence estimate holds:
 \begin{align}\label{eq:RT1-L1-h2}
     \| \hat \eta_h - \eta \|_{X_h}
     + \| \Pi_0 (\hat u_h - u) \|_\Omega
     + \| \hat r_h - r \|_\Omega
     \lesssim h^2.
 \end{align}
\end{theorem}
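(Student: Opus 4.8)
The plan is to dispatch the first two assertions with the abstract results of \Cref{sec: MS-MFE} and to reserve a dedicated superconvergence argument for the $\ell = 0$ estimate. For the stability bound in \eqref{eq:RT1-L1-h} I would simply check the hypotheses of \Cref{thm: stability MS-MFE}: the inclusion $\div \Sigma_h \subseteq U_h$ and elasticity-stability hold by \Cref{lem: RT1-L1 elasticity stable} exactly as in \Cref{thm:stability RT1L1}, while the norm equivalence \eqref{eq: norm equivalence Q} follows from $\Sigma_h \subseteq \P2^{d \times d}$ together with \Cref{lem: quadrature_2}. The linear convergence in \eqref{eq:RT1-L1-h} then follows from \Cref{thm:conv MS-MFE}, since the exactness \eqref{eq: exactness quadrature Q} of $Q_2$ against $\P0$ is part of \Cref{lem: quadrature_2} and the first-order estimate \eqref{eq: assumed conv MFE} for the underlying mixed method was established in \Cref{thm:conv full RT1-L1}.

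The quadratic estimate \eqref{eq:RT1-L1-h2} is the substantive part. First I would record that when $\ell = 0$ the couple-stress equation forces $\omega_h = \hat \omega_h = 0$, so the system reduces to elasticity and the difference $(e_\eta, e_v) \coloneqq (\hat \eta_h - \eta_h, \hat v_h - v_h)$ solves the error system \eqref{eq: error equations} with a right-hand side $\langle (A - A_h)\eta_h, \eta_h' \rangle$ that now acts only through $\sigma_h'$. The crucial structural point is that, since $B$ is unaltered, the discrete momentum balance coincides for both solutions (cf.~\Cref{rem:conservation BDM1P0}), whence $\div e_\sigma = 0$; because a divergence-free field in $\RT1$ is necessarily piecewise linear, $e_\sigma$ carries no quadratic part, i.e.\ $(I - \Pi_1) e_\sigma = 0$. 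Testing \eqref{eq: error equations} with $(e_\eta, e_v)$ and invoking coercivity of $A_h$ gives $\| e_\sigma \|_\Omega^2 \lesssim \langle (A - A_h)\eta_h, e_\eta \rangle$. Viewing the quadrature error as a bilinear form that vanishes on $\P1 \times \P1$ (by exactness of $Q_2$ on $\P2$ and constancy of $\mathcal{A}_\sigma$), I would split it so that one argument always carries the factor $(I - \Pi_1)$; the contribution loading $(I - \Pi_1) e_\sigma$ then drops out and only $\| (I - \Pi_1) \sigma_h \|_\Omega \lesssim h^2$ survives, using $\| \sigma_h - \sigma \|_\Omega \lesssim h^2$ from \eqref{eq: quadratic RT1-L1} and $H^2$-regularity. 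This yields $\| e_\sigma \|_\Omega \lesssim h^2$, and since $\div e_\sigma = 0$ and $e_\omega = 0$ it upgrades to $\| e_\eta \|_{X_h} \lesssim h^2$; a triangle inequality with \eqref{eq: quadratic RT1-L1} then gives $\| \hat \eta_h - \eta \|_{X_h} \lesssim h^2$.

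For the projected displacement and the rotation I would run an Aubin--Nitsche duality argument. With data $g_u = \Pi_0 e_u$ and $g_r = e_r$, let $(\Phi, \varphi, \rho)$ solve the adjoint elasticity problem, for which elliptic regularity gives $\| \Phi \|_{1,\Omega} \lesssim \| \Pi_0 e_u \|_\Omega + \| e_r \|_\Omega \eqqcolon N$. The adjoint equations identify $\| \Pi_0 e_u \|_\Omega^2 + \| e_r \|_\Omega^2 = \langle B(\Phi, 0), e_v \rangle$, which I would split through the interpolant $\pi_\Sigma \Phi$. On the interpolated part the error system rewrites the pairing as $\langle A_h e_\eta, (\pi_\Sigma \Phi, 0) \rangle$ minus a quadrature error, both of which are $\lesssim h^2 N$: the former because $\| e_\sigma \|_\Omega \lesssim h^2$, and the latter because the quadratic part obeys $\| (I - \Pi_1) \pi_\Sigma \Phi \|_\Omega \lesssim h \| \Phi \|_{1,\Omega}$. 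On the remaining consistency part, the divergence term vanishes by the commuting property $\div \pi_\Sigma \Phi = \Pi_U \div \Phi$, leaving the asymmetry term $(\asym (I - \pi_\Sigma) \Phi, e_r)_\Omega \lesssim h \| \Phi \|_{1,\Omega} \| e_r \|_\Omega \lesssim h N^2$. Collecting, $N^2 \lesssim h^2 N + h N^2$, so for $h$ small the last term is absorbed and $N \lesssim h^2$; combining with the $h^2$ bounds for the underlying mixed method closes \eqref{eq:RT1-L1-h2}. The main obstacle is precisely this duality step: one must confirm that every consistency and quadrature contribution is either genuinely $O(h^2 N)$ or, like the asymmetry term, of the absorbable form $O(h N^2)$, which rests on the stress superconvergence $\| e_\sigma \|_\Omega \lesssim h^2$ derived above and on the availability of $H^2$ elliptic regularity for the adjoint problem.
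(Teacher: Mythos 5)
Your handling of the stability bound, the linear convergence in \eqref{eq:RT1-L1-h}, and the stress superconvergence is correct and coincides with the paper's proof: you identify $\div(\hat\sigma_h - \sigma_h) = 0$, conclude $\hat\sigma_h - \sigma_h \in \BDM1^d \subset \P1^{d\times d}$, and exploit that the $Q_2$ quadrature error vanishes on $\P1\times\P1$ pairs, which is precisely \eqref{eq:div-h2}--\eqref{eq:sigma-h2} (you subtract $\Pi_1\eta_h$ where the paper subtracts $\Pi_1\eta$; immaterial). The genuine gap is in your duality step for $\|\Pi_0(\hat u_h - u)\|_\Omega + \|\hat r_h - r\|_\Omega$. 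You test with $(\pi_\Sigma\Phi, 0)$ and claim the quadrature term $\langle (A - A_h)\eta_h, (\pi_\Sigma\Phi,0)\rangle$ is $\lesssim h^2 N$ ``because $\|(I - \Pi_1)\pi_\Sigma\Phi\|_\Omega \lesssim h\|\Phi\|_{1,\Omega}$.'' That justification does not add up: after splitting off the vanishing $\P1\times\P1$ part, the surviving contribution is $\langle (A - A_h)\Pi_1\sigma_h, (I - \Pi_1)\pi_\Sigma\Phi\rangle \lesssim \|\sigma_h\|_\Omega\,\|(I - \Pi_1)\pi_\Sigma\Phi\|_\Omega$, and $\|\sigma_h\|_\Omega = O(1)$, so your estimate yields only $O(hN)$. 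Your absorption inequality then reads $N^2 \lesssim hN + hN^2$, giving $N \lesssim h$ --- first order, not second. Rescuing this by interpolation theory would require $\|(I - \Pi_1)\pi_\Sigma\Phi\|_\Omega \lesssim h^2\|\Phi\|_{2,\Omega}$ together with $H^2$-regularity of the dual stress, which is not available: the dual data $(\Pi_0 e_u, e_r)$ are merely discontinuous $L^2$ functions, so one cannot get beyond $\Phi \in H^1$, exactly the regularity you assumed.

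What would actually save your step is structural rather than an interpolation estimate, and you did not identify it: since you pose the dual problem with the \emph{projected} datum $-\div\Phi = \Pi_0 e_u$, the commuting property gives $\div \pi_\Sigma\Phi = \Pi_U \div \Phi = -\Pi_0 e_u$, which is element-wise constant; an $\RT1$ field whose divergence lies in $\P0$ belongs to $\BDM1 = \P1^d$ (the same \cite[Cor.~2.3.1]{boffi2013mixed} decomposition argument you used for $e_\sigma$). Hence $(I - \Pi_1)\pi_\Sigma\Phi = 0$ identically and the quadrature term is genuinely $O(h^2 N)$; alternatively one may simply use the $\BDM1$ interpolant of $\Phi$, which also commutes with the divergence onto $\P0$. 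The paper avoids the continuous dual problem altogether --- and with it any well-posedness, surjectivity, or elliptic-regularity hypotheses on $\Omega$ --- by constructing discrete, piecewise-linear test functions directly from discrete inf-sup conditions: for the rotation, a solenoidal $\tau_h^r = \nabla\times\theta_h \in \BDM1^d$ with $\Pi_R \asym \tau_h^r = \tilde r_h$, built from the Stokes-stable auxiliary space $\Theta_h$ of \Cref{lem: RT1-L1 elasticity stable}, cf.~\eqref{eq: tau r}; for the displacement, $\tau_h^u \in \BDM1^d$ with $\div \tau_h^u = -\Pi_0 \tilde u_h$ from the $\BDM1\times\P0$ inf-sup. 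Both are then inserted into the error equation \eqref{eq: error sigma} as in \eqref{eq: BDM1 trick for v}, where the piecewise linearity of the test function is exactly what activates the $Q_2$ superconvergence. As written, your proof of \eqref{eq:RT1-L1-h2} is therefore incomplete; it becomes correct once the $\BDM1$ observation above replaces your $\|(I - \Pi_1)\pi_\Sigma\Phi\|_\Omega$ estimate.
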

\begin{proof}
  Since $\RT1 \subset \P2^d$, \Cref{lem: quadrature_2} provides the norm equivalence \eqref{eq: norm equivalence Q} and, in turn, the stability follows from \Cref{thm: stability MS-MFE}. The convergence estimate in \eqref{eq:RT1-L1-h} follows from \Cref{thm:conv MS-MFE}, using the exactness of the quadrature rule from \Cref{lem: quadrature_2} and the linear convergence from \Cref{thm:conv full RT1-L1}.

We proceed with the second-order estimate \eqref{eq:RT1-L1-h2} when $\ell = 0$.
We first consider the components $\| \div (\hat \sigma_h - \sigma) \|_\Omega$ and $\| \hat \sigma_h - \sigma \|_\Omega$, and employ the shorter notation $\tilde \sigma_h \coloneqq \hat \sigma_h - \sigma_h \in \Sigma_h$. By \eqref{eq: system full-MFE} and \eqref{eq: system RT1 L1}, we note that
    \begin{align}
        (\div \tilde \sigma_h, u_h')_\Omega &= 0, &
        \forall u_h' &\in U_h.
    \end{align}
Now, $\div \Sigma_h = U_h$ implies that $\div \tilde \sigma_h = 0$. The estimate on the divergence term now follows from \Cref{thm:conv full RT1-L1} since
\begin{equation}\label{eq:div-h2}
  \| \div (\hat \sigma_h - \sigma) \|_\Omega
  = \| \div (\sigma_h - \sigma) \|_\Omega \lesssim h^2.
\end{equation}
We continue by bounding $\|\tilde \sigma_h\|_\Omega$. To shorten notation further, let $\tilde \eta \coloneqq (\tilde \sigma_h, 0)$ so that
    \begin{align} \label{eq: bound tilde sigma 1}
        \| \tilde \sigma_h \|_\Omega^2 
        \eqsim \| \tilde \sigma_h \|_{Q_2}^2 
        \eqsim \langle A_h \tilde \eta_h, \tilde \eta_h \rangle
        = \langle A_h (\hat \eta_h - \eta_h), \tilde \eta_h \rangle
        = \langle (A - A_h) \eta_h, \tilde \eta_h \rangle
    \end{align}
    where the final equality is due to the error equations \eqref{eq: error equations} with $v_h' = \hat v_h - v_h$ and $\eta_h' = \tilde \eta_h$. 
    We now follow the reasoning of \cite{egger2020second}. Since $\tilde \sigma_h$ is solenoidal, \cite[Cor.~2.3.1]{boffi2013mixed} implies that $\tilde \sigma_h \in \BDM1^d \subset \P1^{d \times d}$. Hence, by \Cref{lem: quadrature_2}, we have $\langle (A - A_h) \Pi_1 \eta, \tilde \eta_h \rangle = 0$ with $\Pi_1$ the $L^2$ projection onto the discontinuous, piecewise linears. This allows us to continue the bound \eqref{eq: bound tilde sigma 1} as
\begin{align} \label{eq: bound tilde sigma 2}
        \langle (A - A_h) \eta_h, \tilde \eta_h \rangle
        = \langle (A - A_h) (\eta_h - \Pi_1 \eta), \tilde \eta_h \rangle
        &\lesssim \| \sigma_h - \Pi_1 \sigma \|_\Omega \| \tilde \sigma_h \|_\Omega \nonumber \\
        &\le (\| \sigma_h - \sigma \|_\Omega + \| (I - \Pi_1) \sigma \|_\Omega) \| \tilde \sigma_h \|_\Omega.
\end{align}
Combining \eqref{eq: bound tilde sigma 1} and \eqref{eq: bound tilde sigma 2} with the approximation properties of $\Pi_1$ and applying the quadratic convergence estimate \eqref{eq: quadratic RT1-L1} from \Cref{thm:conv full RT1-L1}, we obtain
\begin{equation}\label{eq:sigma-h2}
\| \hat \sigma_h - \sigma \|_\Omega \lesssim h^2.
\end{equation}
We next note that if $\ell = 0$, then $\omega = \hat \omega_h = 0$ and thus $\| \hat \eta_h - \eta \|_{X_h} \eqsim \| \hat \sigma_h - \sigma \|_\Omega + \| \div (\hat \sigma_h - \sigma) \|_\Omega$, which, combined with \eqref{eq:div-h2} and \eqref{eq:sigma-h2}, results in
\begin{equation}\label{eq:eta-h2}
\| \hat \eta_h - \eta \|_{X_h} \lesssim h^2.
\end{equation}

It remains to establish the bound on the displacement and rotation variables. 
For that, we consider the error equation in $\Sigma_h$, given by
\begin{align}
    (\mathcal{A}_\sigma \hat \sigma_h, \sigma_h')_{Q_2} 
    - (\mathcal{A}_\sigma \sigma_h, \sigma_h')_\Omega
    + (\div \sigma_h', \tilde u_h)_\Omega
    - (\asym \sigma_h', \tilde r_h)_\Omega &= 0 \quad \forall \sigma_h' \in \Sigma_h,
\end{align}
We rearrange the terms and introduce $\tilde \sigma_h$ to obtain
\begin{align} \label{eq: error sigma}
    - (\div \sigma_h', \tilde u_h)_\Omega
    + (\asym \sigma_h', \tilde r_h)_\Omega
    = (\mathcal{A}_\sigma \tilde \sigma_h, \sigma_h')_{Q_2} 
    + \left[(\mathcal{A}_\sigma \sigma_h, \sigma_h')_{Q_2} - (\mathcal{A}_\sigma \sigma_h, \sigma_h')_\Omega \right].
\end{align}
The first term on the right-hand side can be easily bounded. For the terms in the square brackets, we aim to use the exactness of the quadrature rule from \Cref{lem: quadrature_2}. Since $\sigma_h' \in \P2^{d \times d}$, we may subtract $\Pi_0 \sigma$ as in \eqref{eq: bound diff A}, but this would lead to a first-order estimate. Instead, we construct a piecewise linear test functions from $\BDM1^d \subset \Sigma_h$ and
use $\Pi_1 \sigma$ as follows. The elasticity stability ensures that for given $\tilde r_h \in R_h$, a $\tau_h^r \in \Sigma_h$ exists such that
\begin{align} \label{eq: tau r}
    \div \tau_h^r &= 0, &
    \Pi_R \asym \tau_h^r &= \tilde r_h, &
    \| \tau_h^r \|_\Omega &\lesssim \| \tilde r_h \|_\Omega.
\end{align}
Its construction uses the auxiliary space $\Theta_h$ from \Cref{lem: RT1-L1 elasticity stable}. In particular, the Stokes-stability with $R_h$ provides $\theta_h \in \Theta_h$ that satisfies $\Pi_R \div S \theta_h = r_h$ and $\|\theta_h\|_{H^1(\Omega)} \lesssim \|\tilde r_h \|_\Omega$. The test function is then given by $\tau_h^r = \nabla \times \theta_h$, which satisfies \eqref{eq: tau r} because of the identity \eqref{eq: asym identity}.

Since $\tau_h^r$ is solenoidal, we use \cite[Cor.~2.3.1]{boffi2013mixed} again to conclude that $\tau_h^r \in \BDM1^d$. Substituting this test function in \eqref{eq: error sigma}, we derive
\begin{align} \label{eq: BDM1 trick for v}
    \| \tilde r_h \|_\Omega^2
    &=
    (\mathcal{A}_\sigma \tilde \sigma_h, \tau_h^r)_{Q_2} 
    + \left[(\mathcal{A}_\sigma \sigma_h, \tau_h^r)_{Q_2} - (\mathcal{A}_\sigma \sigma_h, \tau_h^r)_\Omega \right]
     \nonumber \\
    &=
    (\mathcal{A}_\sigma \tilde \sigma_h, \tau_h^r)_{Q_2} 
    + \left[(\mathcal{A}_\sigma (\sigma_h - \Pi_1 \sigma), \tau_h^r)_{Q_2} - (\mathcal{A}_\sigma (\sigma_h - \Pi_1 \sigma), \tau_h^r)_\Omega \right]
     \nonumber \\
    &\lesssim
    (\| \tilde \sigma_h \|_\Omega 
        + \| \sigma_h - \Pi_1 \sigma \|_\Omega) \| \tau_h^r \|_\Omega \nonumber \\
    &\lesssim h^2 \| \tilde r_h \|_\Omega.
\end{align}

Finally, we consider the displacement variable. The inf-sup stability of $\BDM1 \times \P0$ allows us to construct $\tau_h^u \in \BDM1^d$ that satisfies
\begin{align}
    \div \tau_h^u &= -\Pi_0 \tilde u_h, &
    \| \tau_h^u \|_\Omega &\lesssim \| \Pi_0 \tilde u_h \|_\Omega.
\end{align}
Substituting this test function in \eqref{eq: error sigma} and using the same steps as \eqref{eq: BDM1 trick for v}, we obtain
\begin{align}
    \| \Pi_0 \tilde u_h \|^2 &= 
    - (\asym \tau_h^u, \tilde r_h)_\Omega
    + (\mathcal{A}_\sigma \tilde \sigma_h, \tau_h^u)_{Q_2} 
    + \left[(\mathcal{A}_\sigma \sigma_h, \tau_h^u)_{Q_2} 
    - (\mathcal{A}_\sigma \sigma_h, \tau_h^u)_\Omega \right]
     \nonumber \\
    &\lesssim  
    (\| \tilde r_h \|_\Omega + \| \tilde \sigma_h \|_\Omega 
        + \| \sigma_h - \Pi_1 \sigma \|_\Omega)\| \tau_h^u \|_\Omega \nonumber \\
    &\lesssim 
    h^2 \| \Pi_0 \tilde u_h \|_\Omega.
\end{align}

Combined with \eqref{eq: quadratic RT1-L1}, this gives us the quadratic convergence estimate
\begin{align} \label{eq:v-h2}
    \| \Pi_0 (\hat u_h - u) \|_\Omega + \| \hat r - r \|_\Omega \lesssim h^2.
\end{align}
The proof of \eqref{eq:RT1-L1-h2} is completed by combining \eqref{eq:eta-h2} and \eqref{eq:v-h2}.
\end{proof}

\begin{remark}
  When $\ell = 0$, method \eqref{eq: system RT1 L1} is a new second-order multipoint stress mixed finite element method for linear elasticity with weak stress symmetry. The quadratic convergence of the displacement is only proven for the mean per element, similar to \cite[Lem.~4.4]{egger2020second}.
\end{remark}

\section{A higher-order scheme with discontinuous rotations: \texorpdfstring{$\RT1$-$\P1$}{RT1-P1}}
\label{sec:RT1-P1}

The fourth and final multipoint stress mixed finite element scheme we consider employs quadratic finite elements for the stress variables and discontinuous linear polynomials for the rotations:
\begin{align} \label{eq: Spaces RT1-P1}
    \Sigma_h &\coloneqq \RT1^d \cap \Sigma, &
    W_h &\coloneqq \RT1^{k_d} \cap W, &
    U_h &\coloneqq \P1^d, &
    R_h &\coloneqq \P1^{k_d}.
\end{align} 

The triplet $\Sigma_h \times U_h \times R_h$ in \eqref{eq: Spaces RT1-P1} was shown to be elasticity-stable in \cite[Sec. 4.2.3]{lee2016towards} on barycentrically subdivided grids. We emphasize that property \eqref{eq:div-prop} holds with this choice of spaces. As a result, unlike \Cref{sec:RT1-L1}, the mixed finite element method for the Cosserat system is second-order accurate, and the multipoint stress method exhibits second-order convergence for the Cauchy stress and rotation variables. The analysis follows by the same steps as in the previous sections.

\begin{remark} \label{rem:barycentric}
    The restriction on the grid forms a notable drawback of the method. In particular, for a given 3D simplicial grid, such a subdivision increases the number of cells by a factor 4 and, in turn, the space $Y_h$ contains 96 degrees of freedom per element of the original grid.
\end{remark}

\subsection{The mixed finite element method}

As in \Cref{sec:reducible_scheme,sec:RT1-L1}, the above choice of spaces is not considered in \cite{boon2025mixed}, so the resulting mixed finite element method is a new method for the linear Cosserat equations.

\begin{theorem} \label{thm: stab/conv full RT1-P1}
    For the discrete spaces given by \eqref{eq: Spaces RT1-P1}, problem \eqref{eq: system full-MFE} admits a unique solution that satisfies
    \begin{align}
        \| \eta_h \|_{X_h} + \| v_h \|_Y \lesssim \| g \|_{X_h'} + \| f \|_{Y'}.
    \end{align}
    Moreover, if the solution $(\eta, v)$ to \eqref{eq:cosserat_weak} is sufficiently regular, then the mixed finite element solution satisfies
    \begin{align}\label{eq:conv-full-RT1-P1}
        \| \eta_h - \eta \|_{X_h} + \| v_h - v \|_Y \lesssim h^2.
    \end{align}
\end{theorem}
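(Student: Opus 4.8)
The plan is to treat stability and convergence separately, reusing the machinery already assembled for the $\BDM1$-$\P0$ and $\RT1$-$\L1$ methods. For stability, I would first verify the two hypotheses of \Cref{thm: stability full MFE}. The inclusion $\div \Sigma_h \subseteq U_h$ is immediate, since $\div \RT1 = \P1$ and $U_h = \P1^d$. The elasticity-stability of the triplet $\RT1^d \times \P1^d \times \P1^{k_d}$ on barycentrically subdivided grids is provided by \cite[Sec.~4.2.3]{lee2016towards}. With these two facts in place, \Cref{thm: stability full MFE} applies verbatim and yields the stability bound together with existence and uniqueness.

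For the second-order estimate \eqref{eq:conv-full-RT1-P1}, I would follow the template of \Cref{thm:conv full BDM1-P0} almost line by line, now using the canonical $\RT1$ interpolant $\pi_X = (\pi_\Sigma, \pi_W)$ and the $L^2$-projection $\Pi_Y = (\Pi_U, \Pi_R)$ onto $Y_h$, both of which are second-order accurate for the spaces \eqref{eq: Spaces RT1-P1}. Subtracting the interpolants from the error equation shows that $(\eta_h - \pi_X\eta, v_h - \Pi_Y v)$ solves \eqref{eq: system full-MFE} with right-hand sides built from the interpolation errors, so the task reduces to bounding the three consistency terms $\langle A(I-\pi_X)\eta, \eta_h'\rangle$, $\langle B\eta_h', (I-\Pi_Y)v\rangle$, and $\langle B(I-\pi_X)\eta, v_h'\rangle$ in the dual norm, exactly as in \eqref{eq: bound term 1 conv}, \eqref{eq: bound term 2a}, and \eqref{eq: bound term 3 conv}. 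The first and third terms are $O(h^2)$ by continuity of $A$ and $B$ and the quadratic approximation properties of $\pi_X$, while the commuting property $\Pi_U \div (I-\pi_\Sigma) = 0$ again removes the displacement contribution from the third term.

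The decisive difference from \Cref{thm:conv full RT1-L1} lies in the divergence-of-couple-stress contribution to the second term, $(\ell\,\div\omega_h', (I-\Pi_R)r)_\Omega$. Because the present spaces satisfy \eqref{eq:div-prop} --- here $\div W_h = \div \RT1^{k_d} \subseteq \P1^{k_d} = R_h$, in contrast to the $\L1$ rotations of \Cref{sec:RT1-L1} --- the product $(\Pi_0\ell)\,\div\omega_h'$ still lies in $R_h$, so I may use the orthogonality trick of \eqref{eq: bound problematic term} rather than the weaker bound \eqref{eq: bound problematic term L1}. Subtracting $\Pi_0\ell$, applying a discrete inverse inequality to $\div\omega_h'$, and using $\| (I-\Pi_R)r \|_\Omega \lesssim h^2$ (from $R_h = \P1$), I obtain $\lesssim \| (I-\Pi_0)\ell\|_{L^\infty(\Omega)}\,\| \div\omega_h'\|_\Omega\,\| (I-\Pi_R)r\|_\Omega \lesssim h\cdot h^{-1}\|\omega_h'\|_\Omega\cdot h^2 = h^2\|\omega_h'\|_\Omega$. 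The remaining pieces coming from $\asym\sigma_h'$ and $(\nabla\ell)\cdot\omega_h'$, as in \eqref{eq: bound term 2a}, are also $O(h^2)$ now that $R_h = \P1$. Collecting the three consistency bounds and adding the interpolation error $\| (I-\pi_X)\eta\|_{X_h} + \|(I-\Pi_Y)v\|_Y \lesssim h^2$ --- the only nonstandard piece being $\|\Pi_R\div\ell(I-\pi_W)\omega\|_\Omega \lesssim \|(I-\pi_W)\omega\|_\Omega + \|\div(I-\pi_W)\omega\|_\Omega \lesssim h^2$ --- completes the argument.

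The main obstacle I anticipate is precisely this divergence term: one must confirm that the single power of $h$ gained from the $\Pi_0\ell$ subtraction (available only because \eqref{eq:div-prop} holds) exactly compensates the power lost to the inverse inequality on $\|\div\omega_h'\|_\Omega$, so that full second order is recovered even for $\ell > 0$. This is the step that fails for the $\RT1$-$\L1$ method and is the entire reason the discontinuous rotation space $\P1$ delivers genuine second-order accuracy here.
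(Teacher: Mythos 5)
Your proposal is correct and follows essentially the same route as the paper: stability via \Cref{thm: stability full MFE} using the inclusion $\div \Sigma_h \subseteq U_h$ and the elasticity-stable triplet from \cite[Sec.~4.2.3]{lee2016towards}, and convergence by repeating the proof of \Cref{thm:conv full BDM1-P0} with the second-order accurate $\pi_X$ and $\Pi_Y$, where the validity of \eqref{eq:div-prop} permits the $\Pi_0 \ell$-orthogonality argument of \eqref{eq: bound problematic term} so that the couple-stress divergence term is $O(h^2)$. Your identification of this as the decisive difference from the $\RT1$-$\L1$ case is precisely the point the paper emphasizes.
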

\begin{proof}
    The stability follows from \Cref{thm: stability full MFE}, using the elasticity stability of the triplet from \cite[Sec.~4.2.3]{lee2016towards}.
    We derive the convergence estimate by following the proof of \Cref{thm:conv full BDM1-P0}, see also the proof of \Cref{thm:conv full RT1-L1}. Since the interpolation and projection operators $\pi_X$ and $\Pi_Y$ are second-order accurate for the spaces \eqref{eq: Spaces RT1-P1}, i.e $\| (I - \pi_X) \eta \|_{X_h} \lesssim h^2$ and $\| (I - \Pi_Y) v \|_Y \lesssim h^2$, the calculations \eqref{eq: bound term 1 conv}, \eqref{eq: bound term 2 conv}, and \eqref{eq: bound term 3 conv} provide the bounds
\begin{align}\label{eq:bounds-RT1-P1}
        \langle A (I - \pi_X) \eta, \eta_h' \rangle 
        &\lesssim h^2 \| \eta_h' \|_{X_h}, &
        \langle B \eta_h', (I - \Pi_Y) v \rangle 
        &\lesssim h^2 \| \eta_h' \|_{X_h}, &
        \langle B (I - \pi_X) \eta, v_h' \rangle
        &\lesssim h^2 \| v_h' \|_Y.
\end{align}
We emphasize the second-order bound for the second term above, unlike the first-order bound for this term in the proof of \Cref{thm:conv full RT1-L1}, cf. \eqref{eq:bounds-RT1-L1}. The reason is that, since \eqref{eq:div-prop} holds, we can use the argument in \eqref{eq: bound problematic term} to obtain a second-order bound in \eqref{eq: bound term 2 conv}. Bound \eqref{eq:conv-full-RT1-P1} follows from the proof of \Cref{thm:conv full BDM1-P0} using the estimates \eqref{eq:bounds-RT1-P1}.  
\end{proof}

\subsection{The multipoint stress mixed finite element method}

The $\RT1$-$\P1$ multipoint stress mixed finite element method employs the bilinear form $A_h$ from \eqref{eq: A_h RT1} and finds $(\hat \eta_h, \hat v_h) \in X_h \times Y_h$ such that
\begin{align} \label{eq: system RT1 P1}
    \langle A_h \hat \eta_h, \eta_h' \rangle
    - \langle B \eta_h', \hat v_h \rangle
    + \langle B \hat \eta_h, v_h' \rangle
    &= 
    \langle g, \eta_h' \rangle
    + \langle f, v_h' \rangle, &
    \forall (\eta_h', v_h') &\in X_h \times Y_h.
\end{align}

We now prove stability and error bounds for this method. The result is stronger than \Cref{thm:stab/conv MSMFE RT1L1}, since we obtain second-order accuracy for the Cauchy stress in the general case $\ell \ge 0$.

\begin{theorem} \label{thm:stab/conv MSMFE RT1P1}
The $\RT1$-$\P1$ MS-MFE method \eqref{eq: system RT1 P1} has a unique solution that satisfies
 \begin{align}
   \| \hat \eta_h \|_{X_h} + \| \hat v_h \|_Y &\lesssim \| g \|_{X_h'} + \| f \|_{Y'}.
 \end{align}
Moreover, if the solution $(\eta, v)$ to \eqref{eq:cosserat_weak} is sufficiently regular, then
   \begin{align}\label{eq:conv-RT1-P1-h2}
        \| \hat \sigma_h - \sigma \|_\Omega
        + \| \div (\hat \sigma_h - \sigma) \|_\Omega 
        + \| \Pi_R \div \ell (\hat \omega_h - \omega) \|_\Omega
        + \| \Pi_0 (\hat u_h - u) \|_\Omega 
        + \| \hat r - r \|_\Omega 
        \lesssim h^2.
   \end{align}
   and
   \begin{align}\label{eq:conv-RT1-P1-h}
        \|\hat \omega_h - \omega\|_\Omega 
        + \| \hat u_h - u \|_\Omega
        \lesssim h.
   \end{align}
\end{theorem}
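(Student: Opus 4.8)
My plan is to treat the three assertions in turn, reserving the bulk of the effort for the second-order bound \eqref{eq:conv-RT1-P1-h2}. For stability I would invoke \Cref{thm: stability MS-MFE} verbatim: since $\RT1 \subset \P2^d$, \Cref{lem: quadrature_2} supplies the norm equivalence \eqref{eq: norm equivalence Q}, while $\div\Sigma_h \subseteq U_h$ and the elasticity-stability of the triplet are already recorded in \Cref{thm: stab/conv full RT1-P1}. The first-order estimate \eqref{eq:conv-RT1-P1-h} is then immediate from \Cref{thm:conv MS-MFE}: the hypothesis \eqref{eq: exactness quadrature Q} holds because $Q_2$ is exact on $\P0$, and the full method converges (even quadratically) by \Cref{thm: stab/conv full RT1-P1}, so $\|\hat\eta_h - \eta\|_{X_h} + \|\hat v_h - v\|_Y \lesssim h$, which contains both bounds in \eqref{eq:conv-RT1-P1-h}.

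For \eqref{eq:conv-RT1-P1-h2} I would follow the template of \Cref{thm:stab/conv MSMFE RT1L1}, writing $\tilde\eta_h \coloneqq \hat\eta_h - \eta_h = (\tilde\sigma_h, \tilde\omega_h)$ and exploiting that, since $B$ carries no quadrature, the momentum identity $\langle B\tilde\eta_h, v_h'\rangle = 0$ for all $v_h'\in Y_h$ holds exactly. Together with $\div\Sigma_h = U_h$ this yields $\div\tilde\sigma_h = 0$, whence $\|\div(\hat\sigma_h-\sigma)\|_\Omega = \|\div(\sigma_h-\sigma)\|_\Omega \lesssim h^2$ by \Cref{thm: stab/conv full RT1-P1}, as well as the coupling relation $\Pi_R\asym\tilde\sigma_h = \Pi_R\div\ell\,\tilde\omega_h$. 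Because $\tilde\sigma_h$ is solenoidal, \cite[Cor.~2.3.1]{boffi2013mixed} places it in $\BDM1^d \subset \P1^{d\times d}$, so that after subtracting $\Pi_1\sigma$ the Cauchy-stress quadrature consistency $(\mathcal{A}_\sigma\sigma_h, \cdot)_{Q_2} - (\mathcal{A}_\sigma\sigma_h, \cdot)_\Omega$ tested against any $\BDM1$ function is $O(h^2)$, exactly as in \eqref{eq: bound tilde sigma 2}; this is the mechanism that powers the second-order stress estimate.

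The genuinely new difficulty, absent from \Cref{thm:stab/conv MSMFE RT1L1}, is the angular-momentum coupling for $\ell > 0$. Testing the stress error equation \eqref{eq: error sigma} with the solenoidal function $\tilde\sigma_h$ produces the term $(\asym\tilde\sigma_h, \tilde r_h)_\Omega = (\Pi_R\div\ell\,\tilde\omega_h, \tilde r_h)_\Omega$, which vanished in the $\RT1$-$\L1$ analysis only because $\ell = 0$ forced $\tilde\omega_h = 0$. A plain energy argument — testing the full error equation with $\tilde\eta_h$ so that the $B$-contributions cancel — controls only the combined quantity $\|\tilde\sigma_h\|_\Omega^2 + \|\tilde\omega_h\|_\Omega^2$ by the total quadrature error; since $\tilde\omega_h \in \RT1^{k_d}$ is not solenoidal, its consistency is only first order and this route yields merely $\|\tilde\sigma_h\|_\Omega \lesssim h$. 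Decoupling the stress estimate therefore requires a separate, second-order bound on the coupling, and this is where I would invoke \eqref{eq:div-prop}: because $\div W_h \subseteq R_h$, the product $(\Pi_0\ell)\,\div\omega_h$ lies in $R_h$, exactly the structure that rendered the analogous term second order in the full-method estimate \eqref{eq: bound problematic term}. My goal is to establish the superconvergence bound $\|\Pi_R\div\ell\,\tilde\omega_h\|_\Omega \lesssim h^2$ — a projected quantity that can superconverge even though $\tilde\omega_h$ is only first order, in the spirit of the displacement-mean superconvergence of \cite{egger2020second}. Since $\Pi_R\div\ell\,\tilde\omega_h = \Pi_R\asym\tilde\sigma_h$, this simultaneously controls the asymmetric part of $\tilde\sigma_h$ and the coupling term, allowing the conclusions $\|\tilde\sigma_h\|_\Omega \lesssim h^2$, $\|\hat\sigma_h - \sigma\|_\Omega \lesssim h^2$, and $\|\Pi_R\div\ell(\hat\omega_h - \omega)\|_\Omega \lesssim h^2$.

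With $\|\tilde\sigma_h\|_\Omega \lesssim h^2$ in hand, the remaining components $\|\hat r_h - r\|_\Omega$ and $\|\Pi_0(\hat u_h - u)\|_\Omega$ follow exactly as in \eqref{eq: tau r}--\eqref{eq: BDM1 trick for v}: elasticity-stability furnishes a solenoidal $\tau_h^r \in \BDM1^d$ with $\Pi_R\asym\tau_h^r = \tilde r_h$ and Darcy-stability of $\BDM1\times\P0$ a $\tau_h^u \in \BDM1^d$ with $\div\tau_h^u = -\Pi_0\tilde u_h$; substituting these piecewise-linear test functions into \eqref{eq: error sigma} and using $Q_2$-exactness on $\P2$ gives $\|\tilde r_h\|_\Omega + \|\Pi_0\tilde u_h\|_\Omega \lesssim h^2$, after which triangle inequalities with \Cref{thm: stab/conv full RT1-P1} complete \eqref{eq:conv-RT1-P1-h2}. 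I expect the main obstacle to be precisely the decoupling step: the energy identity ties $\|\tilde\sigma_h\|_\Omega$ to the non-solenoidal, first-order $\tilde\omega_h$, and the four second-order quantities $\tilde\sigma_h$, $\tilde r_h$, $\Pi_0\tilde u_h$ and $\Pi_R\div\ell\,\tilde\omega_h$ depend on one another circularly. Breaking this circle — showing that the projected couple-stress divergence superconverges to order $h^2$ despite $\tilde\omega_h$ being only first order, which is where property \eqref{eq:div-prop} enters essentially — is the crux of the argument and the step I would develop most carefully.
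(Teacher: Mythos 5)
Your stability argument and the linear estimate \eqref{eq:conv-RT1-P1-h} are exactly the paper's: \Cref{lem: quadrature_2} gives the norm equivalence needed for \Cref{thm: stability MS-MFE}, and \Cref{thm:conv MS-MFE} combined with \Cref{thm: stab/conv full RT1-P1} gives first order. Likewise, your treatment of $\|\div(\hat\sigma_h-\sigma)\|_\Omega$ via $\div\tilde\sigma_h=0$, and your closing steps with the test functions $\tau_h^r$ and $\tau_h^u$ of \eqref{eq: tau r}--\eqref{eq: BDM1 trick for v}, follow the same route the paper takes, namely reusing the machinery of \Cref{thm:stab/conv MSMFE RT1L1} together with the quadratic MFE convergence of \Cref{thm: stab/conv full RT1-P1}.

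The gap is at the step you yourself call the crux: the bound $\|\Pi_R\div\ell\,\tilde\omega_h\|_\Omega\lesssim h^2$ is announced but never proven, and it cannot play the role you assign to it. By the error equation in $R_h$ this quantity \emph{equals} $\|\Pi_R\asym\tilde\sigma_h\|_\Omega$ --- you state the identity yourself --- so it is not an independent lever with which to decouple the system; proving it \emph{is} (part of) proving the stress bound, and the only equations touching $\div\ell\,\tilde\omega_h$ feed back either through $\tilde\sigma_h$ (the $R_h$-row) or through the merely first-order quantity $\tilde\omega_h$ (the $W_h$-row), which is exactly where your own circularity analysis started. The paper exploits the identity in the opposite, logically sound, order: it first establishes $\|\hat\sigma_h-\sigma\|_\Omega\lesssim h^2$, and only then reads off $\|\Pi_R\div\ell(\hat\omega_h-\omega)\|_\Omega=\|\Pi_R\asym(\hat\sigma_h-\sigma)\|_\Omega\lesssim h^2$ as a one-line final step. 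Moreover, the mechanism you invoke for the crux --- property \eqref{eq:div-prop} making $(\Pi_0\ell)\div\omega_h$ an element of $R_h$ --- is the device of \eqref{eq: bound problematic term}, which controls the \emph{consistency} error $(I-\Pi_R)r$ of the full MFE method; it says nothing about the quadrature error $(\mathcal{A}_\omega\omega_h,\tilde\omega_h)_{Q_2}-(\mathcal{A}_\omega\omega_h,\tilde\omega_h)_\Omega$ that drives your circle. To your credit, the difficulty you flag is real: for $\ell>0$ the coupling $(\asym\tilde\sigma_h,\tilde r_h)_\Omega$ no longer vanishes, so the paper's appeal to ``the same arguments as \Cref{thm:stab/conv MSMFE RT1L1}'' compresses a nontrivial step (at the least one must bring in the $W_h$-row of the error equations, which gives the coupling the favorable sign $-(\mathcal{A}_\omega\tilde\omega_h,\tilde\omega_h)_{Q_2}$ plus an $\omega$-quadrature error that, since $\tilde\omega_h$ is not piecewise linear, still requires an argument beyond the one in \eqref{eq: bound tilde sigma 2}). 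But your proposal does not supply that missing argument either; as a proof of \eqref{eq:conv-RT1-P1-h2} it is incomplete at precisely the point where the work lies.
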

\begin{proof}
\Cref{lem: quadrature_2} suffices to invoke \Cref{thm: stability MS-MFE} and obtain stability. With the addition of \Cref{thm: stab/conv full RT1-P1}, \Cref{thm:conv MS-MFE} provides the linear convergence bound \eqref{eq:conv-RT1-P1-h}.

The second-order convergence of $\| \hat \sigma_h - \sigma \|_\Omega$ and $\| \div (\hat \sigma_h - \sigma) \|_\Omega$ in \eqref{eq:conv-RT1-P1-h2}
follows by the same arguments as in \Cref{thm:stab/conv MSMFE RT1L1}, using the quadratic convergence from \Cref{thm: stab/conv full RT1-P1}.
Similarly, the quadratic estimate on $\| \Pi_0 (\hat u_h - u) \|_\Omega$ and $\| \hat r - r \|_\Omega$ follows from the arguments from \Cref{thm:stab/conv MSMFE RT1L1} with the existence of $\Theta_h$ presented in \cite[Sec.~4.2.3]{lee2016towards}.

For the third term in \eqref{eq:conv-RT1-P1-h2}, we note that $\langle B (\hat \eta_h - \eta), v_h' \rangle = 0$ for all $v_h' \in Y_h$, which implies
    \begin{align}
        \| \Pi_R \div \ell (\hat \omega_h - \omega) \|_\Omega
        = \| \Pi_R \asym (\hat \sigma_h - \sigma) \|_\Omega
        \lesssim \| \hat \sigma_h - \sigma \|_\Omega
        \lesssim h^2.
    \end{align}
\end{proof}

\begin{remark}
The reason for obtaining only first-order bound for $\|\hat \omega_h - \omega\|_\Omega$ in \eqref{eq:conv-RT1-P1-h} is that $\hat \omega_h - \omega_h$ is not divergence-free, so it is not in $\P1^{d \times d}$, hence the argument used for $\hat \sigma_h - \sigma_h$ in \eqref{eq: bound tilde sigma 2} cannot be applied.
\end{remark}

\section{Numerical results} 
\label{sec:num}

We validate the proposed discretization schemes by considering two numerical test cases, one in 2D and one in 3D, following a set-up similar to \cite{boon2025mixed}.
The computational domain is given by the unit square, respectively cube, $\Omega \coloneqq (0, 1)^d$ for $d=2, 3$.
The material parameters are set as $\mu_\sigma = \mu_\omega = 1$, $\mu_\sigma^c = \mu_\omega^c = 0.1$, and $\lambda_\sigma = \lambda_\omega = 1$.
We prescribe the analytical displacement and rotation solution as:
\begin{subequations}
\begin{align}
    u(x) &= \begin{cases}
        \sum_{i=1}^{d}{x_{i+1}\left(1-x_{i+1}\right)\sin{\left(\pi x_i\right)}\bm{e}_i}, & d = 2, \\
        \sum_{i=1}^{d}{x_{i+1}\left(1-x_{i+1}\right)x_{i-1}\left(1-x_{i-1}\right)\sin{\left(\pi x_i\right)}\bm{e}_i}, & d = 3, \\
    \end{cases} \\
    r(x) &= 
    \begin{cases}    
        \sin{\left(\pi x_1\right)}\sin{\left(\pi x_2\right)}, & d = 2, \\
        \sum_{i=1}^{d}{x_i\left(1-x_i\right)\sin{\left(\pi x_{i+1}\right)}\sin{\left(\pi x_{i-1}\right)}\bm{e}_i}, & d = 3,
    \end{cases}
\end{align}
\end{subequations}
in which $x = [x_1, \ldots, x_d]$ and the indices $i$ are understood modulo $d$. Moreover $\bm{e}_i$ is the $i$-th canonical basis vector of $\mathbb{R}^d$.
This choice of rotation and displacements allows us to set homogeneous natural boundary conditions on $\partial_n \Omega = \partial \Omega$. We consider two variants concerning $\ell$ by either setting $\ell = 1$ or $\ell = \varpi$ with:
\begin{align}
    \varpi(x) = 
    \begin{cases}
        0, & 0 \le x_1 < \frac13, \\
        \sin^2\left(\frac\pi2(3 x_1-1)\right), & \frac13 \le x_1 < \frac23, \\
        1, & \frac23 \le x_1 \le 1.\\
    \end{cases}
\end{align}
Note that $\varpi \in H^2(\Omega)$ represents a smooth transition function between a linearly elastic material where $\ell = 0$ and a Cosserat material where $\ell = 1$. The computational grids are chosen to conform to the planes at $x_1 = \frac13$ and $x_1 = \frac23$.

By setting the right-hand side terms $g_\sigma$ and $g_\omega$ to zero, we derive the stresses $\sigma$ and $\omega$ according to \eqref{eq: constit laws}. In turn, we derive the corresponding right-hand side terms $f_\sigma$ and $f_\omega$ in \eqref{eq: momentum eqs} analytically.
For each method, we then compare the performance of the multipoint stress (MS-MFE) method with the corresponding full mixed finite element (MFE) method. We evaluate each method by computing the $L^2$-error with respect to the known solution.

\begin{remark}
    If $\ell$ is chosen to be a piecewise linear transition between zero and one, then the solution constructed in this way is not sufficiently regular to satisfy the assumptions for the quadratic convergence estimates. As a result, we observed that all methods converge only linearly. These results are omitted for brevity.
\end{remark}

All results are computed with the libraries PorePy \cite{Keilegavlen2020} and PyGeoN \cite{pygeon}, using direct solvers from UMFPACK \cite{Davis2004} for the MFE in 2D and Cholesky decomposition from CHOLMOD \cite{Chen2008} for the MS-MFE. 
In 3D, the MFE systems are too computationally demanding to solve directly. We therefore apply GMRes from SciPy \cite{virtanen2020scipy}, using the MS-MFE solver as a preconditioner, until a relative residual of $10^{-6}$ is reached.

The two-dimensional meshes are unstructured and generated using Gmsh \cite{Geuzaine2009}, whereas we choose structured tetrahedral grids in 3D. 
The run-scripts for the numerical tests are publicly available at \url{https://github.com/compgeo-mox/cosserat}.
For ease of reference, we summarize the observed and theoretical convergence rates in \Cref{tab: summary_conv}.
\begin{table}[ht]
    \caption{Observed and predicted (in parentheses) convergence rates of the proposed multipoint stress mixed finite element methods.}
    \label{tab: summary_conv}
    \centering

    \begin{tabular}{cc|cccc}
    \hline

    \hline
     Name & Table & Order($\sigma$) & Order($\omega$) & Order($u$) & Order($r$) \\
    \hline
        $\BDM1$-$\P0$ & \ref{tab: BDM1-P0-2d} &
        1 (1) & 1 (1) & 1 (1) & 1 (1) \\
        $\BDM1$-$\L1$ & \ref{tab: BDM1-L1-2d} &
        1 (1) & 1 (1) & 1 (1) & 1-2 (1) \\
        $\RT1$-$\L1$ & \ref{tab: RT1-L1-2d} &
        2 (1) & $\sim$1 (1) & 2 (1) & 1-2 (1) \\
        $\RT1$-$\P1$ & \ref{tab: RT1-P1-2d} &
        2 (2) & 2 (1) & 2 (1) & $\sim$2 (2) \\
    \hline
    \end{tabular}
\end{table}

\subsection{The simple scheme \texorpdfstring{$\BDM1$-$\P0$}{BDM1-P0}}
\label{sub:the_simple_scheme}
\begin{table}[ht]
    \centering
    \caption{Convergence results for the $\BDM1$-$\P0$ mixed finite element methods from \Cref{sec:simple_scheme}.}
    \label{tab: BDM1-P0-2d}
    {\footnotesize
    \begin{tabular}{|c|c|cc|cc|cc|cc|c|c|}
    \hline
    2D $(\ell = 1)$ & $h$ & Error$(\sigma)$ & Order & Error$(\omega)$  & Order & Error$(u)$  & Order & Error$(r)$  & Order & DoF \\
    \hline
    \multirow{4}{*}{\rotatebox[origin=c]{90}{MFE}}
% & 1.74e-01 & 1.81e-02 &    - & 1.46e-02 &    - & 3.05e-01 &    - & 2.16e-01 &    - & 2.17e+03 \\ 
& 7.85e-02 & 6.30e-03 &    - & 3.52e-03 &    - & 1.57e-01 &    - & 1.11e-01 &    - & 7.72e+03 \\ 
& 4.25e-02 & 2.73e-03 & 1.36 & 8.86e-04 & 2.25 & 7.98e-02 & 1.10 & 5.63e-02 & 1.11 & 2.94e+04 \\ 
& 2.05e-02 & 1.30e-03 & 1.02 & 2.17e-04 & 1.93 & 3.97e-02 & 0.96 & 2.80e-02 & 0.96 & 1.19e+05 \\ 
& 1.04e-02 & 6.45e-04 & 1.02 & 5.46e-05 & 2.02 & 2.00e-02 & 1.01 & 1.41e-02 & 1.01 & 4.65e+05 \\
    \hline
    \multirow{4}{*}{\rotatebox[origin=c]{90}{MS-MFE}}
% & 1.74e-01 & 5.23e-02 &    - & 6.79e-02 &    - & 3.11e-01 &    - & 2.20e-01 &    - & 5.16e+02 \\ 
& 7.85e-02 & 2.45e-02 &    - & 3.32e-02 &    - & 1.58e-01 &    - & 1.12e-01 &    - & 1.88e+03 \\ 
& 4.25e-02 & 1.23e-02 & 1.12 & 1.71e-02 & 1.09 & 7.99e-02 & 1.11 & 5.64e-02 & 1.11 & 7.25e+03 \\ 
& 2.05e-02 & 6.07e-03 & 0.97 & 8.43e-03 & 0.97 & 3.97e-02 & 0.96 & 2.80e-02 & 0.96 & 2.95e+04 \\ 
& 1.04e-02 & 3.03e-03 & 1.02 & 4.24e-03 & 1.01 & 2.00e-02 & 1.01 & 1.41e-02 & 1.01 & 1.16e+05 \\
    \hline
    \hline
    2D $(\ell = \varpi)$& $h$ & Error$(\sigma)$ & Order & Error$(\omega)$  & Order & Error$(u)$  & Order & Error$(r)$  & Order & DoF \\
    \hline
    \multirow{4}{*}{\rotatebox[origin=c]{90}{MFE}}
% & 1.74e-01 & 1.83e-02 &    - & 6.39e-02 &    - & 3.05e-01 &    - & 2.20e-01 &    - & 2.17e+03 \\ 
& 7.85e-02 & 6.38e-03 &    - & 2.99e-02 &    - & 1.57e-01 &    - & 1.12e-01 &    - & 7.72e+03 \\ 
& 4.25e-02 & 2.75e-03 & 1.37 & 1.55e-02 & 1.07 & 7.98e-02 & 1.10 & 5.64e-02 & 1.12 & 2.94e+04 \\ 
& 2.05e-02 & 1.30e-03 & 1.03 & 7.62e-03 & 0.97 & 3.97e-02 & 0.96 & 2.80e-02 & 0.96 & 1.19e+05 \\ 
& 1.04e-02 & 6.46e-04 & 1.03 & 3.87e-03 & 0.99 & 2.00e-02 & 1.01 & 1.41e-02 & 1.01 & 4.65e+05 \\
    \hline
    \multirow{4}{*}{\rotatebox[origin=c]{90}{MS-MFE}}
% & 1.74e-01 & 5.28e-02 &    - & 9.40e-02 &    - & 3.11e-01 &    - & 2.39e-01 &    - & 5.16e+02 \\ 
& 7.85e-02 & 2.44e-02 &    - & 4.52e-02 &    - & 1.58e-01 &    - & 1.21e-01 &    - & 1.88e+03 \\ 
& 4.25e-02 & 1.22e-02 & 1.13 & 2.28e-02 & 1.12 & 7.99e-02 & 1.11 & 6.13e-02 & 1.11 & 7.25e+03 \\ 
& 2.05e-02 & 6.02e-03 & 0.97 & 1.11e-02 & 0.98 & 3.97e-02 & 0.96 & 3.04e-02 & 0.96 & 2.95e+04 \\ 
& 1.04e-02 & 3.00e-03 & 1.02 & 5.62e-03 & 1.00 & 2.00e-02 & 1.01 & 1.54e-02 & 1.00 & 1.16e+05 \\
    \hline
%     \end{tabular}
%     }
% \end{table}
% 
% 
% \begin{table}[ht]
%     \centering
%     \caption{Convergence results for the $\BDM1$-$\P0$ mixed finite element methods from \Cref{sec:simple_scheme}.}
%     \label{tab: BDM1-P0-3d}
%     {\footnotesize
%     \begin{tabular}{|c|c|cc|cc|cc|cc|c|c|}
    \hline
    3D $(\ell = 1)$ & $h$ & Error$(\sigma)$ & Order & Error$(\omega)$  & Order & Error$(u)$  & Order & Error$(r)$  & Order & DoF \\
    \hline
    \multirow{4}{*}{\rotatebox[origin=c]{90}{MFE}}
& 5.77e-01 & 2.64e-01 &    - & 2.27e-01 &    - & 1.29e+00 &    - & 1.29e+00 &    - & 7.78e+03 \\ 
& 2.89e-01 & 6.24e-02 & 2.08 & 5.35e-02 & 2.08 & 6.98e-01 & 0.89 & 6.97e-01 & 0.89 & 5.83e+04 \\ 
& 1.92e-01 & 2.87e-02 & 1.91 & 2.33e-02 & 2.05 & 4.73e-01 & 0.96 & 4.73e-01 & 0.96 & 1.92e+05 \\ 
& 1.44e-01 & 1.72e-02 & 1.78 & 1.30e-02 & 2.03 & 3.57e-01 & 0.98 & 3.57e-01 & 0.98 & 4.51e+05 \\
    \hline
    \multirow{4}{*}{\rotatebox[origin=c]{90}{MS-MFE}}
& 5.77e-01 & 3.94e-01 &    - & 3.53e-01 &    - & 1.41e+00 &    - & 1.40e+00 &    - & 9.72e+02 \\ 
& 2.89e-01 & 1.44e-01 & 1.45 & 1.39e-01 & 1.35 & 7.15e-01 & 0.98 & 7.13e-01 & 0.98 & 7.78e+03 \\ 
& 1.92e-01 & 8.84e-02 & 1.20 & 8.67e-02 & 1.16 & 4.79e-01 & 0.99 & 4.77e-01 & 0.99 & 2.62e+04 \\ 
& 1.44e-01 & 6.43e-02 & 1.11 & 6.35e-02 & 1.08 & 3.59e-01 & 1.00 & 3.59e-01 & 0.99 & 6.22e+04 \\
    \hline
    \hline
    3D $(\ell = \varpi)$& $h$ & Error$(\sigma)$ & Order & Error$(\omega)$  & Order & Error$(u)$  & Order & Error$(r)$  & Order & DoF \\
    \hline
    \multirow{4}{*}{\rotatebox[origin=c]{90}{MFE}}
& 5.77e-01 & 2.64e-01 &    - & 3.04e-01 &    - & 1.29e+00 &    - & 1.32e+00 &    - & 7.78e+03 \\ 
& 2.89e-01 & 6.33e-02 & 2.06 & 1.31e-01 & 1.21 & 6.98e-01 & 0.89 & 7.23e-01 & 0.87 & 5.83e+04 \\ 
& 1.92e-01 & 2.92e-02 & 1.91 & 6.39e-02 & 1.77 & 4.73e-01 & 0.96 & 4.83e-01 & 1.00 & 1.92e+05 \\ 
& 1.44e-01 & 1.76e-02 & 1.76 & 4.10e-02 & 1.54 & 3.57e-01 & 0.98 & 3.62e-01 & 1.00 & 4.51e+05 \\
    \hline
    \multirow{4}{*}{\rotatebox[origin=c]{90}{MS-MFE}}
& 5.77e-01 & 3.96e-01 &    - & 4.30e-01 &    - & 1.41e+00 &    - & 1.42e+00 &    - & 9.72e+02 \\ 
& 2.89e-01 & 1.45e-01 & 1.45 & 2.07e-01 & 1.06 & 7.15e-01 & 0.98 & 7.55e-01 & 0.91 & 7.78e+03 \\ 
& 1.92e-01 & 8.91e-02 & 1.20 & 1.19e-01 & 1.36 & 4.79e-01 & 0.99 & 4.98e-01 & 1.03 & 2.62e+04 \\ 
& 1.44e-01 & 6.47e-02 & 1.11 & 8.44e-02 & 1.20 & 3.59e-01 & 1.00 & 3.71e-01 & 1.02 & 6.22e+04 \\
    \hline
    \end{tabular}
    }
\end{table}

We first consider the lowest-order methods proposed in \Cref{sec:simple_scheme}. As shown in \Cref{tab: BDM1-P0-2d}, these schemes converge linearly with respect to the mesh size. It is notable that the multipoint stress method achieves the same error in the displacement and rotation variables as the full mixed finite element method, with significantly fewer degrees of freedom. In particular, for these grid families, we notice that in 2D, the size of the corresponding linear system for MS-MFE is about $25\%$ of the size of MFE and $14\%$ in 3D.

We observe certain superlinear convergence behavior for the MFE method, namely in $\omega$ in 2D and in both stress variables in 3D. This was similarly observed in the numerical experiments of \cite[Sec.~5]{boon2025mixed}. However, the introduction of the quadrature rule eliminates this behavior, leading to first order convergence for the MS-MFE method, in agreement with \Cref{thm:conv MS BDM1-P0}.

\subsection{The reducible scheme \texorpdfstring{$\BDM1$-$\L1$}{BDM1-L1}}
\label{sub:the_reducible_scheme}
\begin{table}[ht]
    \centering
    \caption{Convergence results for the $\BDM1$-$\L1$ mixed finite element methods from \Cref{sec:reducible_scheme}.}
    \label{tab: BDM1-L1-2d}
    {\footnotesize
    \begin{tabular}{|c|c|cc|cc|cc|cc|c|}
    \hline
    2D $(\ell = 1)$ &$h$ & Error$(\sigma)$ & Order & Error$(\omega)$  & Order & Error$(u)$  & Order & Error$(r)$  & Order & DoF \\
    \hline
    \multirow{4}{*}{\rotatebox[origin=c]{90}{MFE}}
% & 1.74e-01 & 3.51e-02 &    - & 4.72e-02 &    - & 3.05e-01 &    - & 1.74e-02 &    - & 2.10e+03 \\ 
& 7.85e-02 & 1.59e-02 &    - & 1.14e-02 &    - & 1.57e-01 &    - & 4.62e-03 &    - & 7.44e+03 \\ 
& 4.25e-02 & 7.79e-03 & 1.17 & 3.73e-03 & 1.82 & 7.98e-02 & 1.10 & 1.21e-03 & 2.18 & 2.83e+04 \\ 
& 2.05e-02 & 3.84e-03 & 0.97 & 1.34e-03 & 1.41 & 3.97e-02 & 0.96 & 3.00e-04 & 1.91 & 1.14e+05 \\ 
& 1.04e-02 & 1.93e-03 & 1.01 & 4.10e-04 & 1.74 & 2.00e-02 & 1.01 & 7.62e-05 & 2.02 & 4.45e+05 \\
    \hline
    \multirow{4}{*}{\rotatebox[origin=c]{90}{MS-MFE}}
% & 1.74e-01 & 5.11e-02 &    - & 7.94e-02 &    - & 3.11e-01 &    - & 4.47e-02 &    - & 4.48e+02 \\ 
& 7.85e-02 & 2.38e-02 &    - & 3.47e-02 &    - & 1.58e-01 &    - & 1.15e-02 &    - & 1.60e+03 \\ 
& 4.25e-02 & 1.20e-02 & 1.12 & 1.73e-02 & 1.14 & 7.99e-02 & 1.11 & 2.88e-03 & 2.25 & 6.11e+03 \\ 
& 2.05e-02 & 5.90e-03 & 0.97 & 8.46e-03 & 0.98 & 3.97e-02 & 0.96 & 7.10e-04 & 1.92 & 2.47e+04 \\ 
& 1.04e-02 & 2.95e-03 & 1.02 & 4.25e-03 & 1.01 & 2.00e-02 & 1.01 & 1.80e-04 & 2.02 & 9.67e+04 \\
    \hline
    \hline
    2D $(\ell=\varpi)$ &$h$ & Error$(\sigma)$ & Order & Error$(\omega)$  & Order & Error$(u)$  & Order & Error$(r)$  & Order & DoF \\
    \hline
    \multirow{4}{*}{\rotatebox[origin=c]{90}{MFE}}
% & 1.74e-01 & 3.11e-02 &    - & 1.60e-01 &    - & 3.05e-01 &    - & 8.23e-02 &    - & 2.10e+03 \\ 
& 7.85e-02 & 1.51e-02 &    - & 8.62e-02 &    - & 1.57e-01 &    - & 3.57e-02 &    - & 7.44e+03 \\ 
& 4.25e-02 & 7.59e-03 & 1.12 & 4.62e-02 & 1.02 & 7.98e-02 & 1.10 & 1.28e-02 & 1.67 & 2.83e+04 \\ 
& 2.05e-02 & 3.80e-03 & 0.95 & 2.29e-02 & 0.96 & 3.97e-02 & 0.96 & 4.50e-03 & 1.44 & 1.14e+05 \\ 
& 1.04e-02 & 1.92e-03 & 1.00 & 1.16e-02 & 1.00 & 2.00e-02 & 1.01 & 1.58e-03 & 1.54 & 4.45e+05 \\
    \hline
    \multirow{4}{*}{\rotatebox[origin=c]{90}{MS-MFE}}
% & 1.74e-01 & 5.09e-02 &    - & 1.05e-01 &    - & 3.11e-01 &    - & 4.32e-02 &    - & 4.48e+02 \\ 
& 7.85e-02 & 2.38e-02 &    - & 5.32e-02 &    - & 1.58e-01 &    - & 1.92e-02 &    - & 1.60e+03 \\ 
& 4.25e-02 & 1.20e-02 & 1.12 & 2.72e-02 & 1.09 & 7.99e-02 & 1.11 & 5.94e-03 & 1.91 & 6.11e+03 \\ 
& 2.05e-02 & 5.90e-03 & 0.97 & 1.34e-02 & 0.98 & 3.97e-02 & 0.96 & 1.99e-03 & 1.50 & 2.47e+04 \\ 
& 1.04e-02 & 2.95e-03 & 1.02 & 6.77e-03 & 1.00 & 2.00e-02 & 1.01 & 6.75e-04 & 1.59 & 9.67e+04 \\
    \hline
%     \end{tabular}
%     }
% \end{table}
% 
% \begin{table}[ht]
%     \centering
%     \caption{Convergence results for the $\BDM1$-$\L1$ mixed finite element methods from \Cref{sec:reducible_scheme}.}
%     \label{tab: BDM1-L1-3d}
%     {\footnotesize
%     \begin{tabular}{|c|c|cc|cc|cc|cc|c|}
    \hline
    3D $(\ell = 1)$ &$h$ & Error$(\sigma)$ & Order & Error$(\omega)$  & Order & Error$(u)$  & Order & Error$(r)$  & Order & DoF \\
    \hline
    \multirow{4}{*}{\rotatebox[origin=c]{90}{MFE}}
& 5.77e-01 & 4.54e-01 &    - & 5.22e-01 &    - & 1.31e+00 &    - & 4.97e-01 &    - & 7.48e+03 \\ 
& 2.89e-01 & 1.08e-01 & 2.07 & 1.36e-01 & 1.94 & 6.99e-01 & 0.90 & 5.81e-02 & 3.10 & 5.55e+04 \\ 
& 1.92e-01 & 6.06e-02 & 1.43 & 6.77e-02 & 1.71 & 4.74e-01 & 0.96 & 2.21e-02 & 2.38 & 1.82e+05 \\ 
& 1.44e-01 & 4.24e-02 & 1.24 & 4.34e-02 & 1.55 & 3.57e-01 & 0.98 & 1.24e-02 & 2.01 & 4.26e+05 \\
    \hline
    \multirow{4}{*}{\rotatebox[origin=c]{90}{MS-MFE}}
& 5.77e-01 & 4.83e-01 &    - & 6.41e-01 &    - & 1.44e+00 &    - & 2.15e+00 &    - & 6.78e+02 \\ 
& 2.89e-01 & 1.44e-01 & 1.75 & 1.76e-01 & 1.86 & 7.15e-01 & 1.01 & 1.73e-01 & 3.64 & 4.92e+03 \\ 
& 1.92e-01 & 8.75e-02 & 1.23 & 9.75e-02 & 1.46 & 4.79e-01 & 0.99 & 4.79e-02 & 3.16 & 1.61e+04 \\ 
& 1.44e-01 & 6.36e-02 & 1.11 & 6.78e-02 & 1.26 & 3.59e-01 & 1.00 & 2.76e-02 & 1.92 & 3.77e+04 \\
    \hline
    \hline
    3D $(\ell=\varpi)$ &$h$ & Error$(\sigma)$ & Order & Error$(\omega)$  & Order & Error$(u)$  & Order & Error$(r)$  & Order & DoF \\
    \hline
    \multirow{4}{*}{\rotatebox[origin=c]{90}{MFE}}
& 5.77e-01 & 3.10e-01 &    - & 6.10e-01 &    - & 1.29e+00 &    - & 4.49e-01 &    - & 7.48e+03 \\ 
& 2.89e-01 & 8.91e-02 & 1.80 & 2.67e-01 & 1.19 & 6.99e-01 & 0.89 & 2.80e-01 & 0.68 & 5.55e+04 \\ 
& 1.92e-01 & 5.27e-02 & 1.29 & 1.75e-01 & 1.04 & 4.74e-01 & 0.96 & 1.79e-01 & 1.10 & 1.82e+05 \\ 
& 1.44e-01 & 3.84e-02 & 1.11 & 1.32e-01 & 0.99 & 3.57e-01 & 0.98 & 1.27e-01 & 1.20 & 4.26e+05 \\
    \hline
    \multirow{4}{*}{\rotatebox[origin=c]{90}{MS-MFE}}
& 5.77e-01 & 4.04e-01 &    - & 7.08e-01 &    - & 1.41e+00 &    - & 7.87e-01 &    - & 6.78e+02 \\ 
& 2.89e-01 & 1.43e-01 & 1.50 & 2.32e-01 & 1.61 & 7.15e-01 & 0.98 & 1.60e-01 & 2.30 & 4.92e+03 \\ 
& 1.92e-01 & 8.74e-02 & 1.21 & 1.31e-01 & 1.41 & 4.78e-01 & 0.99 & 7.61e-02 & 1.83 & 1.61e+04 \\ 
& 1.44e-01 & 6.35e-02 & 1.11 & 9.08e-02 & 1.28 & 3.59e-01 & 0.99 & 4.88e-02 & 1.54 & 3.77e+04 \\
    \hline
    \end{tabular}
    }
\end{table}

The results for the reducible scheme of \Cref{sec:reducible_scheme} are shown in \Cref{tab: BDM1-L1-2d}. Again, we observe first order convergence, at least, in all variables, as predicted by \Cref{thm:convergence BDM1-L1} and \Cref{thm:conv MSMFE BDML}. The rotation variable appears to converge superlinearly in some cases, even after the introduction of the quadrature rule. We also observe quadratic convergence in this variable if $\ell = 1$. It may therefore be possible to improve our analysis. 

Also in this case the reduction in the linear system size is significant, in 2D MS-MFE is about $22\%$ of the size of MFE and $9\%$ in 3D. In these tests, we have not performed the additional reduction of the rotation variable in the region where $\ell = 0$. This is only an algebraic manipulation and therefore does not affect the numerical solution.

\subsection{The higher-order scheme with continuous rotations \texorpdfstring{$\RT1$-$\L1$}{RT1-L1}}
\label{sub: numerics rt1_l1}
\begin{table}[ht]
    \centering
    \caption{Convergence results for the $\RT1$-$\L1$ mixed finite element methods from \Cref{sec:RT1-L1}.}
    \label{tab: RT1-L1-2d}
    {\footnotesize
    \begin{tabular}{|c|c|cc|cc|cc|cc|c|}
    \hline
    2D $(\ell = 1)$&$h$ & Error$(\sigma)$ & Order & Error$(\omega)$  & Order & Error$(u)$  & Order & Error$(r)$  & Order & DoF \\
    \hline
    \multirow{4}{*}{\rotatebox[origin=c]{90}{MFE}}
% & 1.74e-01 & 1.93e-02 &    - & 8.12e-02 &    - & 8.25e-03 &    - & 1.17e-02 &    - & 3.82e+03 \\ 
& 7.85e-02 & 4.75e-03 &    - & 3.97e-02 &    - & 1.85e-03 &    - & 2.67e-03 &    - & 1.37e+04 \\ 
& 4.25e-02 & 1.17e-03 & 2.28 & 2.05e-02 & 1.08 & 4.86e-04 & 2.18 & 6.92e-04 & 2.20 & 5.24e+04 \\ 
& 2.05e-02 & 2.86e-04 & 1.93 & 1.01e-02 & 0.97 & 1.18e-04 & 1.95 & 1.68e-04 & 1.94 & 2.12e+05 \\ 
& 1.04e-02 & 7.26e-05 & 2.02 & 5.06e-03 & 1.02 & 2.93e-05 & 2.04 & 4.20e-05 & 2.03 & 8.31e+05 \\
    \hline
    \multirow{4}{*}{\rotatebox[origin=c]{90}{MS-MFE}}
% & 1.74e-01 & 1.95e-02 &    - & 7.02e-02 &    - & 1.12e-02 &    - & 1.26e-02 &    - & 1.14e+03 \\ 
& 7.85e-02 & 4.78e-03 &    - & 3.22e-02 &    - & 2.66e-03 &    - & 2.86e-03 &    - & 4.10e+03 \\ 
& 4.25e-02 & 1.18e-03 & 2.29 & 1.63e-02 & 1.11 & 7.03e-04 & 2.17 & 7.40e-04 & 2.20 & 1.58e+04 \\ 
& 2.05e-02 & 2.87e-04 & 1.93 & 8.00e-03 & 0.98 & 1.71e-04 & 1.94 & 1.79e-04 & 1.94 & 6.40e+04 \\ 
& 1.04e-02 & 7.27e-05 & 2.02 & 3.97e-03 & 1.03 & 4.31e-05 & 2.03 & 4.49e-05 & 2.04 & 2.51e+05 \\
    \hline
    \hline
    2D $(\ell=\varpi)$&$h$ & Error$(\sigma)$ & Order & Error$(\omega)$  & Order & Error$(u)$  & Order & Error$(r)$  & Order & DoF \\
    \hline
    \multirow{4}{*}{\rotatebox[origin=c]{90}{MFE}}
% & 1.74e-01 & 1.95e-02 &    - & 8.17e-02 &    - & 8.29e-03 &    - & 2.22e-02 &    - & 3.82e+03 \\ 
& 7.85e-02 & 4.86e-03 &    - & 4.20e-02 &    - & 1.89e-03 &    - & 8.66e-03 &    - & 1.37e+04 \\ 
& 4.25e-02 & 1.20e-03 & 2.28 & 2.15e-02 & 1.09 & 4.95e-04 & 2.18 & 2.44e-03 & 2.06 & 5.24e+04 \\ 
& 2.05e-02 & 2.93e-04 & 1.93 & 1.05e-02 & 0.98 & 1.19e-04 & 1.95 & 5.42e-04 & 2.07 & 2.12e+05 \\ 
& 1.04e-02 & 7.43e-05 & 2.02 & 5.26e-03 & 1.02 & 2.97e-05 & 2.04 & 1.39e-04 & 2.00 & 8.31e+05 \\
    \hline
    \multirow{4}{*}{\rotatebox[origin=c]{90}{MS-MFE}}
% & 1.74e-01 & 1.98e-02 &    - & 7.16e-02 &    - & 1.13e-02 &    - & 2.34e-02 &    - & 1.14e+03 \\ 
& 7.85e-02 & 4.89e-03 &    - & 3.48e-02 &    - & 2.69e-03 &    - & 8.64e-03 &    - & 4.10e+03 \\ 
& 4.25e-02 & 1.20e-03 & 2.28 & 1.74e-02 & 1.13 & 7.09e-04 & 2.17 & 2.32e-03 & 2.15 & 1.58e+04 \\ 
& 2.05e-02 & 2.93e-04 & 1.94 & 8.44e-03 & 1.00 & 1.72e-04 & 1.94 & 5.09e-04 & 2.08 & 6.40e+04 \\ 
& 1.04e-02 & 7.42e-05 & 2.02 & 4.19e-03 & 1.03 & 4.34e-05 & 2.03 & 1.28e-04 & 2.03 & 2.51e+05 \\
    \hline
%     \end{tabular}
%     }
% \end{table}
% 
% \begin{table}[ht]
%     \centering
%     \caption{Convergence results for the $\RT1$-$\L1$ mixed finite element methods from \Cref{sec:RT1-L1}.}
%     \label{tab: RT1-L1-3d}
%     {\footnotesize
%     \begin{tabular}{|c|c|cc|cc|cc|cc|c|}
    \hline
    3D $(\ell = 1)$&$h$ & Error$(\sigma)$ & Order & Error$(\omega)$  & Order & Error$(u)$  & Order & Error$(r)$  & Order & DoF \\
    \hline
     \multirow{4}{*}{\rotatebox[origin=c]{90}{MFE}}
& 5.77e-01 & 2.61e-01 &    - & 3.24e-01 &    - & 2.05e-01 &    - & 1.91e-01 &    - & 1.19e+04 \\ 
& 2.89e-01 & 7.13e-02 & 1.87 & 1.51e-01 & 1.10 & 4.87e-02 & 2.08 & 5.69e-02 & 1.75 & 9.05e+04 \\ 
& 1.92e-01 & 3.22e-02 & 1.96 & 9.26e-02 & 1.20 & 2.13e-02 & 2.04 & 2.46e-02 & 2.07 & 3.00e+05 \\ 
& 1.44e-01 & 1.82e-02 & 1.98 & 6.65e-02 & 1.15 & 1.19e-02 & 2.02 & 1.36e-02 & 2.07 & 7.06e+05 \\
     \hline
    \multirow{4}{*}{\rotatebox[origin=c]{90}{MS-MFE}}
& 5.77e-01 & 2.62e-01 &    - & 6.66e-01 &    - & 2.46e-01 &    - & 2.98e-01 &    - & 2.14e+03 \\ 
& 2.89e-01 & 7.24e-02 & 1.85 & 4.81e-01 & 0.47 & 5.83e-02 & 2.08 & 1.36e-01 & 1.13 & 1.66e+04 \\ 
& 1.92e-01 & 3.28e-02 & 1.95 & 3.54e-01 & 0.76 & 2.56e-02 & 2.03 & 6.83e-02 & 1.69 & 5.55e+04 \\ 
& 1.44e-01 & 1.86e-02 & 1.98 & 2.75e-01 & 0.87 & 1.43e-02 & 2.02 & 4.01e-02 & 1.85 & 1.31e+05 \\
    \hline
    \hline
    3D $(\ell=\varpi)$&$h$ & Error$(\sigma)$ & Order & Error$(\omega)$  & Order & Error$(u)$  & Order & Error$(r)$  & Order & DoF \\
    \hline
    \multirow{4}{*}{\rotatebox[origin=c]{90}{MFE}}
& 5.77e-01 & 2.63e-01 &    - & 3.59e-01 &    - & 2.05e-01 &    - & 3.09e-01 &    - & 1.19e+04 \\ 
& 2.89e-01 & 7.21e-02 & 1.87 & 1.84e-01 & 0.97 & 4.86e-02 & 2.08 & 7.93e-02 & 1.96 & 9.05e+04 \\ 
& 1.92e-01 & 3.26e-02 & 1.95 & 1.04e-01 & 1.39 & 2.13e-02 & 2.04 & 4.22e-02 & 1.56 & 3.00e+05 \\ 
& 1.44e-01 & 1.85e-02 & 1.97 & 7.21e-02 & 1.29 & 1.19e-02 & 2.02 & 2.81e-02 & 1.41 & 7.06e+05 \\
    \hline
    \multirow{4}{*}{\rotatebox[origin=c]{90}{MS-MFE}}
& 5.77e-01 & 2.63e-01 &    - & 6.46e-01 &    - & 2.47e-01 &    - & 3.50e-01 &    - & 2.14e+03 \\ 
& 2.89e-01 & 7.23e-02 & 1.86 & 4.95e-01 & 0.38 & 5.83e-02 & 2.09 & 9.59e-02 & 1.87 & 1.66e+04 \\ 
& 1.92e-01 & 3.28e-02 & 1.95 & 3.56e-01 & 0.81 & 2.55e-02 & 2.04 & 5.13e-02 & 1.54 & 5.55e+04 \\ 
& 1.44e-01 & 1.86e-02 & 1.96 & 2.77e-01 & 0.88 & 1.43e-02 & 2.02 & 3.43e-02 & 1.40 & 1.31e+05 \\
    \hline
    \end{tabular}
    }
\end{table}

\Cref{tab: RT1-L1-2d} present the convergence results for the $\RT1$-$\L1$ schemes proposed in \Cref{sec:RT1-L1}. We showed in \Cref{thm:conv full RT1-L1} and \Cref{thm:stab/conv MSMFE RT1L1} that these methods would converge with only first order if $\ell \ne 0$. However, we nevertheless observe superlinear convergence in the Cauchy stress, displacement, and rotation variables.
These results appear to indicate that the loss in convergence highlighted in \eqref{eq: bound problematic term L1} only affects the couple stress $\omega$. 

In the 3D test case, the couple stress converges the slowest, and is the only variable that does not exhibit a convincing linear convergence. This may be due to the fact that the grids in 3D are too coarse to illustrate the asymptotic behavior. However, the coarse grids were necessary to keep the number of degrees of freedom of these methods manageable by the linear solver. We moreover note that while the rotation converges quadratically in 2D, its rate is reduced in 3D if the quadrature rule is introduced or $\ell$ is spatially varying. Finally, the reduction in the numbers of degrees of freedom is similar to the previous tests; the system of MS-MFE is about $30\%$ of the size of MFE in 2D and $19\%$ in 3D.

\subsection{The higher-order scheme with discontinuous rotations \texorpdfstring{$\RT1$-$\P1$}{RT1-P1}}
\label{sub: numerics rt1_p1}
\begin{table}[ht]
    \centering
    \caption{Convergence results for the $\RT1$-$\P1$ mixed finite element methods from \Cref{sec:RT1-P1}.}
    \label{tab: RT1-P1-2d}
    {\footnotesize
    \begin{tabular}{|c|c|cc|cc|cc|cc|c|}
    \hline
    2D $(\ell = 1)$&$h$ & Error$(\sigma)$ & Order & Error$(\omega)$  & Order & Error$(u)$  & Order & Error$(r)$  & Order & DoF \\
    \hline
    \multirow{4}{*}{\rotatebox[origin=c]{90}{MFE}}
% & 1.74e-01 & 1.36e-02 &    - & 1.25e-02 &    - & 7.62e-03 &    - & 1.20e-02 &    - & 1.25e+04 \\ 
& 7.85e-02 & 3.35e-03 &    - & 3.07e-03 &    - & 1.85e-03 &    - & 2.94e-03 &    - & 4.53e+04 \\ 
& 4.25e-02 & 8.41e-04 & 2.25 & 7.57e-04 & 2.28 & 4.86e-04 & 2.18 & 7.17e-04 & 2.30 & 1.74e+05 \\ 
& 2.05e-02 & 2.08e-04 & 1.92 & 1.86e-04 & 1.92 & 1.19e-04 & 1.93 & 1.76e-04 & 1.93 & 7.08e+05 \\ 
& 1.04e-02 & 5.26e-05 & 2.02 & 4.69e-05 & 2.02 & 3.00e-05 & 2.02 & 4.43e-05 & 2.02 & 2.78e+06 \\
    \hline
    \multirow{4}{*}{\rotatebox[origin=c]{90}{MS-MFE}}
% & 1.74e-01 & 1.37e-02 &    - & 1.26e-02 &    - & 9.16e-03 &    - & 1.22e-02 &    - & 4.64e+03 \\ 
& 7.85e-02 & 3.39e-03 &    - & 3.10e-03 &    - & 2.23e-03 &    - & 2.99e-03 &    - & 1.69e+04 \\ 
& 4.25e-02 & 8.50e-04 & 2.25 & 7.65e-04 & 2.28 & 5.82e-04 & 2.19 & 7.32e-04 & 2.30 & 6.53e+04 \\ 
& 2.05e-02 & 2.10e-04 & 1.91 & 1.88e-04 & 1.93 & 1.42e-04 & 1.93 & 1.79e-04 & 1.93 & 2.65e+05 \\ 
& 1.04e-02 & 5.34e-05 & 2.02 & 4.74e-05 & 2.02 & 3.59e-05 & 2.02 & 4.52e-05 & 2.02 & 1.04e+06 \\
    \hline
    \hline
    2D $(\ell = \varpi)$&$h$ & Error$(\sigma)$ & Order & Error$(\omega)$  & Order & Error$(u)$  & Order & Error$(r)$  & Order & DoF \\
    \hline
    \multirow{4}{*}{\rotatebox[origin=c]{90}{MFE}}
% & 1.74e-01 & 1.43e-02 &    - & 4.08e-02 &    - & 7.76e-03 &    - & 3.07e-02 &    - & 1.25e+04 \\ 
& 7.85e-02 & 3.57e-03 &    - & 1.09e-02 &    - & 1.86e-03 &    - & 6.86e-03 &    - & 4.53e+04 \\ 
& 4.25e-02 & 9.11e-04 & 2.23 & 2.80e-03 & 2.21 & 4.87e-04 & 2.19 & 2.37e-03 & 1.73 & 1.74e+05 \\ 
& 2.05e-02 & 2.26e-04 & 1.91 & 6.63e-04 & 1.97 & 1.19e-04 & 1.93 & 6.04e-04 & 1.88 & 7.08e+05 \\ 
& 1.04e-02 & 5.71e-05 & 2.02 & 1.69e-04 & 2.01 & 3.00e-05 & 2.02 & 1.43e-04 & 2.12 & 2.78e+06 \\
    \hline
    \multirow{4}{*}{\rotatebox[origin=c]{90}{MS-MFE}}
% & 1.74e-01 & 1.44e-02 &    - & 4.09e-02 &    - & 9.29e-03 &    - & 3.05e-02 &    - & 4.64e+03 \\ 
& 7.85e-02 & 3.60e-03 &    - & 1.09e-02 &    - & 2.24e-03 &    - & 7.00e-03 &    - & 1.69e+04 \\ 
& 4.25e-02 & 9.19e-04 & 2.22 & 2.81e-03 & 2.21 & 5.83e-04 & 2.20 & 2.45e-03 & 1.71 & 6.53e+04 \\ 
& 2.05e-02 & 2.29e-04 & 1.91 & 6.65e-04 & 1.98 & 1.42e-04 & 1.93 & 6.28e-04 & 1.87 & 2.65e+05 \\ 
& 1.04e-02 & 5.79e-05 & 2.02 & 1.70e-04 & 2.00 & 3.60e-05 & 2.02 & 1.52e-04 & 2.08 & 1.04e+06 \\
    \hline
%     \end{tabular}
%     }
% \end{table}
% 
% \begin{table}[ht]
%     \centering
%     \caption{Convergence results for the $\RT1$-$\P1$ mixed finite element methods from \Cref{sec:RT1-P1}.}
%     \label{tab: RT1-P1-3d}
%     {\footnotesize
%     \begin{tabular}{|c|c|cc|cc|cc|cc|c|}
    \hline
    3D $(\ell = 1)$&$h$ & Error$(\sigma)$ & Order & Error$(\omega)$  & Order & Error$(u)$  & Order & Error$(r)$  & Order & DoF \\
    \hline
     \multirow{3}{*}{\rotatebox[origin=c]{90}{MFE}}
& 5.77e-01 & 2.91e-01 &    - & 2.73e-01 &    - & 1.65e-01 &    - & 1.25e-01 &    - & 5.15e+04 \\ 
& 2.89e-01 & 7.96e-02 & 1.87 & 7.42e-02 & 1.88 & 4.12e-02 & 2.01 & 3.48e-02 & 1.85 & 4.08e+05 \\ 
& 1.92e-01 & 3.58e-02 & 1.95 & 3.34e-02 & 1.95 & 1.82e-02 & 2.00 & 1.58e-02 & 1.93 & 1.37e+06 \\
     \hline
    \multirow{4}{*}{\rotatebox[origin=c]{90}{MS-MFE}}
& 5.77e-01 & 2.90e-01 &    - & 2.72e-01 &    - & 1.99e-01 &    - & 1.33e-01 &    - & 1.56e+04 \\ 
& 2.89e-01 & 7.92e-02 & 1.87 & 7.40e-02 & 1.88 & 4.88e-02 & 2.03 & 3.63e-02 & 1.87 & 1.24e+05 \\ 
& 1.92e-01 & 3.56e-02 & 1.97 & 3.33e-02 & 1.97 & 2.16e-02 & 2.01 & 1.65e-02 & 1.95 & 4.20e+05 \\ 
& 1.44e-01 & 2.01e-02 & 1.99 & 1.88e-02 & 1.99 & 1.21e-02 & 2.01 & 9.35e-03 & 1.97 & 9.95e+05 \\
    \hline
    \hline
    3D $(\ell = \varpi)$&$h$ & Error$(\sigma)$ & Order & Error$(\omega)$  & Order & Error$(u)$  & Order & Error$(r)$  & Order & DoF \\
    \hline
     \multirow{3}{*}{\rotatebox[origin=c]{90}{MFE}}
& 5.77e-01 & 2.95e-01 &    - & 3.85e-01 &    - & 1.67e-01 &    - & 3.79e-01 &    - & 5.15e+04 \\ 
& 2.89e-01 & 8.17e-02 & 1.86 & 1.81e-01 & 1.09 & 4.17e-02 & 2.01 & 1.40e-01 & 1.44 & 4.08e+05 \\ 
& 1.92e-01 & 3.69e-02 & 1.94 & 8.21e-02 & 1.93 & 1.84e-02 & 2.00 & 6.94e-02 & 1.72 & 1.37e+06 \\
     \hline
    \multirow{4}{*}{\rotatebox[origin=c]{90}{MS-MFE}}
& 5.77e-01 & 2.93e-01 &    - & 3.87e-01 &    - & 2.03e-01 &    - & 3.77e-01 &    - & 1.56e+04 \\ 
& 2.89e-01 & 8.12e-02 & 1.85 & 1.81e-01 & 1.10 & 4.93e-02 & 2.04 & 1.40e-01 & 1.43 & 1.24e+05 \\ 
& 1.92e-01 & 3.67e-02 & 1.96 & 8.21e-02 & 1.95 & 2.18e-02 & 2.02 & 6.89e-02 & 1.75 & 4.20e+05 \\ 
& 1.44e-01 & 2.08e-02 & 1.97 & 4.69e-02 & 1.94 & 1.22e-02 & 2.01 & 4.29e-02 & 1.64 & 9.95e+05 \\
    \hline
    \end{tabular}
    }
\end{table}

\Cref{tab: RT1-P1-2d} show the behavior of the $\RT1$-$\P1$ scheme for both the full and the multipoint stress mixed finite element methods. Recall that for these methods, we have to use barycentrically subdivided grids. The results confirm the quadratic rates predicted for the MFE method in \Cref{thm: stab/conv full RT1-P1} and the MS-MFE method in \Cref{thm:stab/conv MSMFE RT1P1}. The only exception is the rotation for MS-MFE in 3D with spatially varying $\ell$, for which the rate is slightly less then two, but appear to be approaching two as the grids are refined. In addition, it is notable that the couple stress and the displacement, for which only first order convergence is established for the MS-MFE method in \Cref{thm:stab/conv MSMFE RT1P1}, converge quadratically also after applying the quadrature rule.

Note that the barycentric subdivision of the grid does not impact the mesh size $h$ since the diameters of the simplices is unaffected. However, the subdivision significantly increases the number of degrees of freedom, cf.~\Cref{rem:barycentric}. Due to the larger system size, it was not computationally feasible to obtain the MFE results on the finest grid in 3D. Finally, in terms of the numbers of degrees of freedom, we report that MS-MFE is about $37\%$ of the size of MFE in 2D, and $30\%$ in 3D.

\section{Conclusion}
\label{sec:conclusion}

We have proposed and analyzed four multipoint stress mixed finite element methods for the linearized Cosserat equations. These methods were characterized by a low-order quadrature rule with which the Cauchy and couple stress variables can be eliminated locally. The numerical schemes therefore only contain the displacement and rotation variables. Through a priori error estimates, we showed that each of the variants converges linearly or quadratically if the exact solution is sufficiently regular. Numerical experiments support these analytical results, and we moreover observed higher convergence rates than expected in some variables.

\section*{Acknowledgments}

The second author has been partially funded by the PRIN project ``FREYA - Fault REactivation: a hYbrid numerical Approach'' - SLG2RIST01. The present research is part of the activities of ``Dipartimento di Eccellenza 2023-2027'', Italian Minister of University and Research (MUR), grant Dipartimento di Eccellenza 2023-2027, under the project ``PON Ricerca e Innovazione 2014-2020". The fourth author has been partially supported by the US National Science Foundation through grant DMS-2410686.

\bibliographystyle{elsarticle-num}
\bibliography{references}

\appendix
\renewcommand{\appendixname}{}

\end{document}